 \newif\ifHideFoot
\makeatletter\@input{fakediagonal.tex}\makeatother
\makeatletter\@input{fakebloch.tex}\makeatother
\setlist[enumerate]{leftmargin=8.5mm}
\numberwithin{equation}{section}
\newtheorem{teo}{Theorem}[section]
\newtheorem{pro}[teo]{Proposition}
\newtheorem{lem}[teo]{Lemma}
\newtheorem{cor}[teo]{Corollary}
\newtheorem*{teo*}{Theorem}
\newtheorem*{pro*}{Proposition}
\newtheorem{teoalpha}{Theorem}
\theoremstyle{definition}
\newtheorem{exa}[teo]{Example}
\theoremstyle{remark}
\newtheorem{rem}[teo]{Remark}
\newcommand{\Yano}[1]{}
\newcommand{\Jeff}[1]{}
\newcommand{\Charles}[1]{}
\newcommand{\marg}[1]{\normalsize{{
   \color{red}\footnote{{\color{blue}#1}}}{\marginpar[\vskip
   -.25cm{\color{Maroon}\hfill\thefootnote$\implies$}]{\vskip
    -.2cm{\color{Maroon}$\impliedby$\tiny\thefootnote}}}}}
\newcommand{\Yano}[1]{\marg{(Yano) #1}}
\newcommand{\Jeff}[1]{\marg{(Jeff) #1}}
\newcommand{\Charles}[1]{\marg{(Charles) #1}}
  \def\res{{\bf\rm R}}
\def\sep{{\text{sep}}}
\DeclareMathOperator{\im}{im}
\global\let\hom\undefined
\DeclareMathOperator{\hom}{Hom}
\newcommand{\til}[1]{{\widetilde{#1}}}
\def\ff{{\mathbb F}}
\def\rat{{\mathbb Q}}
\def\integ{{\mathbb Z}}
\def\calo{{\mathcal O}}
\def\iso{\cong}
\renewcommand{\bar}[1]{{\overline{#1}}}
\newcommand{\ubar}[1]{{\underline{#1}}}
\DeclareMathOperator{\aut}{Aut}
\DeclareMathOperator{\alb}{Alb}
\DeclareMathOperator{\End}{End}
\DeclareMathOperator{\gal}{Gal}
\DeclareMathOperator{\spec}{Spec}
\newcommand{\st}[1]{\left\{#1\right\}}
\newcommand{\abs}[1]{{\left|#1\right|}}
\newcommand{\floor}[1]{\lfloor #1 \rfloor}
\title{A complete answer to Albanese base change for  incomplete varieties}
\author{Jeffrey D. Achter}
\address{Colorado State University, Department of Mathematics,
 Fort Collins, CO 80523,
 USA}
\email{j.achter@colostate.edu}
\author{Sebastian Casalaina-Martin }
\address{University of Colorado, Department of Mathematics,
 Boulder, CO 80309, USA }
\email{casa@math.colorado.edu}
\author{Charles Vial}
\address{Universit\"at Bielefeld, Fakult\"at f\"ur Mathematik, Germany}
\email{vial@math.uni-bielefeld.de}
\thanks{Research of the first and second authors is supported in part by grants from the Simons Foundation (637075 and 581058, respectively).
The research of the third author was funded by the Deutsche Forschungsgemeinschaft (DFG, German Research Foundation) -- SFB-TRR 358/1 2023 -- 491392403}
\begin{document}

\begin{abstract}
Albanese varieties provide a standard tool in algebraic geometry for converting questions about varieties in general, to questions about Abelian varieties.  A result of Serre provides the existence of an Albanese variety for any  geometrically connected and geometrically reduced scheme of finite type over a field, and a result of Grothendieck--Conrad establishes that Albanese varieties are stable under base change of field provided the scheme is, in addition, proper.   A result of Raynaud shows that base change can fail for Albanese varieties without this properness hypothesis.   In this paper we show that Albanese varieties of  geometrically connected and geometrically reduced schemes of finite type over a field are stable under separable field extensions.  We also show that the failure of base change in general is explained by the L/K-image for purely inseparable extensions L/K.  
\end{abstract}

\subjclass[2020]{Primary 14K30;    Secondary 11G10,    14G17  }

  \maketitle

  \section*{Introduction}

Consider a $K$-pointed 
         scheme  $(V,v)$ of finite type over a field $K$.    A \emph{pointed
Albanese variety} for this object, if it exists, consists of an abelian variety
$\alb_{V/K}$ over $K$ and a pointed $K$-morphism $$\xymatrix{a_v: V \ar[r]& \alb_{V/K}}$$ taking $v$ to
$\mathbf{0}_{\operatorname{Alb}_{V/K}}$, which is initial for pointed $K$-morphisms from $V$ to abelian
varieties.  More generally, if $V$ is a
 scheme of finite type over a field $K$,
an \emph{Albanese torsor} for $V$, if it exists, is a morphism $a:V \to \alb^1_{V/K}$ to a torsor under an abelian variety $\alb_{V/K}$, the \emph{Albanese variety} of~$V$, which is initial for morphisms to torsors under abelian varieties.
For complex projective manifolds, Albanese varieties were 
a classical, and central, tool in algebraic geometry; they provide a
method of converting geometric questions about a variety into
related questions about abelian varieties, and extend the techniques
used for studying  smooth projective curves via the Jacobian and the
Abel map to varieties of higher dimension.    In 1960, Serre showed
such an Albanese variety  exists for any geometrically
connected and geometrically reduced scheme of finite type over any
field    \cite{serrealb} (see \Cref{T:Serre-Alb} and \Cref{R:GrCon}),
thereby allowing for the extension of these classical techniques to
this setting.  Other   treatments were considered at about the same
time  \cite{NS52,Chevalley60}.

After existence, perhaps the most important structural question is  base change.
In the special case where $V$ is in addition assumed to be  proper and geometrically normal  over $K$, Grothendieck  \cite{FGA} 
identified $\alb_{V/K}$ with
$((\operatorname{Pic}^0_{V/K})_{\operatorname{red}})^\vee$, the dual abelian variety of the reduction of $\operatorname{Pic}^0_{V/K}$.  In this
setting, since the formation of the Picard scheme is compatible with
base change, it follows that if $L/K$ is any field extension, then the
canonical map $\beta_{V,L/K}:\alb_{V_L/L} \to (\alb_{V/K})_L$ is an
isomorphism. 
However, without the hypothesis that $V$ be proper, it is known that base change can fail:

\begin{exa}[Raynaud: Albaneses are not stable under base change]
  \label{E:badbc}
  Let $L/K$ be a finite purely inseparable field extension, and let  $A/L$ be an 
  abelian variety.   
      Let $G =
  \res_{L/K}A$ be the Weil restriction, which is a smooth  connected commutative algebraic
  $K$-group with  $\dim G= [L:K]\cdot \dim A$    \cite[Exp.~XVII, App.~II, Prop.~5.1]{SGA32}.  
 There is a short exact sequence of $L$-groups 
   \cite[Exp.~XVII, App.~II, Prop.~5.1]{SGA32}
 $$
 \xymatrix{1\ar[r]& U\ar[r]& G_L\ar[r]^u&A\ar[r]& 0,}
 $$  where $U$ is a smooth connected and unipotent linear algebraic $L$-group, 
 exhibiting $A$ as the Albanese of $G_L$  \cite[Ex.~4.2.7]{brionstructure}.  
   As we will see in \Cref{L:albBCinsep},   $\operatorname{Alb}_{G/K}=\im_{L/K} A$, the $L/K$-image of $A$ (see \S \ref{S:pfThmA}).  
 If $A$ is defined over $K$,   then $(\operatorname{Alb}_{G/K})_L=(\im_{L/K} A)_L=A=\operatorname{Alb}_{G_L/L}$.  However, if $A$ is \emph{not} defined over $K$, then $G$ is not an extension of an abelian variety over $K$ by a smooth algebraic $K$-group   
 \cite[Exp.~XVII, App.~II, Cor.(ii)~to~Prop.~5.1]{SGA32} \footnote{Note that the statement of  \cite[Exp.~XVII, App.~II, Cor.(ii)~to~Prop.~5.1]{SGA32} has the implicit hypothesis that $A$ not be defined over $K$: the proof uses this hypothesis to conclude that the linear algebraic subgroup  of $G$ in the proof is strictly larger than $U$ when base changed to $L$; moreover, the hypothesis that $A$ not be defined over $K$ is required in the statement of the corollary, as this example shows.}; Brion uses this to show that in this case $(\operatorname{Alb}_{G/K})_L   \ne A = \operatorname{Alb}_{G_L/L}$  \cite[Ex.~4.2.7]{brionstructure},
   providing an example where base change fails.     
\end{exa}

Despite this failure of base change, 
there are a few striking features of this example.  First, the field extension is purely inseparable, and second, it happens that   $\alb_{G/K} \iso \im_{L/K} \alb_{G_L/L}$.
  The main result of this paper shows that these observations about the Raynaud example represent the general situation.  In other words,   it is the
inseparability of $L/K$, and not the improperness of $G$, that drives
the failure of base change in the Raynaud example.  Indeed,  for a purely inseparable extension, the Albanese is the $L/K$-image of the Albanese of the base change:

\begin{teo*}\label{T:T}
   Let $V$ be a 
    geometrically connected and geometrically reduced 
  scheme of finite type over a field~$K$, 
  and let $L/K$ be an extension of fields.
  \begin{enumerate}[label={\bf(\Alph*)}]
  \item\label{T:main}  (Theorem \ref{T:mainAbody}) If $L/K$ is separable, then $\alb_{V_L/L} \iso (\alb_{V/K})_L$.
  \item\label{T:insep} (\Cref{L:albBCinsep}) If $L/K$ is a purely inseparable extension, then $\alb_{V/K} \iso \im_{L/K} \alb_{V_L/L}$.
  \end{enumerate}
\end{teo*}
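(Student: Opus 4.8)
The plan is to study the canonical comparison homomorphism $\beta:=\beta_{V,L/K}\colon \alb_{V_L/L}\ra(\alb_{V/K})_L$ in each case. In both parts I would first record that $\beta$ is \emph{always surjective}: the image of the Albanese morphism $V\ra\alb_{V/K}$ generates $\alb_{V/K}$, so the image of its base change $V_L\ra(\alb_{V/K})_L$ generates $(\alb_{V/K})_L$, and the image of a homomorphism of abelian varieties is an abelian subvariety, which here must be everything. Thus $\dim\alb_{V_L/L}\geq\dim\alb_{V/K}$ always, and the content of \ref{T:main} is that no new part appears and that $\beta$ has trivial kernel, while \ref{T:insep} says that the possibly nontrivial (infinitesimal) kernel of $\beta$ is exactly what $\im_{L/K}$ undoes. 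The dichotomy between the two parts is governed by whether $L\tensor_K L$ is reduced: reducedness, equivalently separability, is what makes descent of morphisms to abelian varieties effective, and its failure is the source of the Raynaud phenomenon.

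For \ref{T:main} I would reduce in stages. Since a morphism from the finite-type scheme $V_L$ to an abelian variety over $L$, together with that abelian variety, is already defined over a finitely generated subextension, the formation of $\alb_{V_L/L}$ commutes with filtered colimits of fields and I may assume $L/K$ is finitely generated; choosing a separating transcendence basis then factors $L/K$ as a purely transcendental extension followed by a finite separable one. For a finite separable, hence (after enlarging) Galois, extension with group $\Gamma=\gal(L/K)$, the scheme $V_L$ carries the canonical $\Gamma$-descent datum coming from $V/K$; by functoriality of the Albanese this induces a $\Gamma$-descent datum on $\alb_{V_L/L}$, which is effective because abelian varieties are quasi-projective. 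The descended abelian variety $C/K$ receives the descent of the ($\Gamma$-equivariant) Albanese morphism, so $V\ra C$ factors through $\alb_{V/K}$ by the universal property, and matching this factorization against $\beta$ via the uniqueness clauses shows $\beta$ is an isomorphism.

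The heart of \ref{T:main} is the purely transcendental case, which I would treat for $L=K(t)$ and then induct. Here $V_{K(t)}$ is the generic fibre of the constant family $V\times_K\mathbb{A}^1_K\ra\mathbb{A}^1_K$. I would spread out any pointed morphism $V_{K(t)}\ra A$ to a morphism $V_U\ra\mathcal{A}$ over a dense open $U\subseteq\mathbb{A}^1_K$ with $\mathcal{A}/U$ an abelian scheme, and then invoke the Lang--N\'eron theory of the $K(t)/K$-trace together with rigidity of abelian schemes: because the source family is constant, the trace-free part of $\alb_{V_{K(t)}/K(t)}$ can receive no generating morphism from $V_{K(t)}$, forcing it to vanish, so that $\alb_{V_{K(t)}/K(t)}$ is isotrivial and $\beta$ is an isogeny. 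That it is in fact an isomorphism onto the constant $(\alb_{V/K})_{K(t)}$ then follows by descending this isotrivial family, e.g.\ using the functorial action of the $K$-automorphisms $t\mapsto t+c$ of $K(t)$. \textbf{The main obstacle here} is the rigidity/trace step for a possibly singular and non-proper $V$ over an imperfect $K$, where one cannot freely pass to geometric points since $\bar K/K$ is itself inseparable; the argument must stay within the regular extension $K(t)/K$, where the trace is available unconditionally.

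For \ref{T:insep} I would argue directly from universal properties, independently of \ref{T:main}. The map $\beta$ is a homomorphism $\alb_{V_L/L}\ra(\alb_{V/K})_L$ of abelian varieties over $L$ whose target is defined over $K$, so the universal property of the $L/K$-image recalled in \S\ref{S:pfThmA} yields a canonical $K$-morphism $\im_{L/K}\alb_{V_L/L}\ra\alb_{V/K}$; the reverse map comes from descending the composite $V_L\ra\alb_{V_L/L}\ra(\im_{L/K}\alb_{V_L/L})_L$ along the Albanese universal property over $K$. Reducing by a colimit and then a tower argument to the height-one case $L\subseteq K^{1/p}$, the key input is that homomorphisms between abelian varieties are unramified, hence defined over separable extensions: as $L/K$ is purely inseparable, the only discrepancy between $\alb_{V_L/L}$ and a form over $K$ is a relative Frobenius isogeny, and this is precisely what $\im_{L/K}$ divides out, using the geometric reducedness of $V$ to see that the relevant Frobenius-kernel torsor over $V$ is trivial. \textbf{The main obstacle} is thus the effectivity of this purely inseparable (Frobenius) descent, which replaces the Galois descent that was available in the separable case.
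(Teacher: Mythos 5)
Both halves of your proposal rest on key steps that are asserted rather than proved, and in each case the missing step is precisely where the paper has to work hardest. For \ref{T:main}: the reduction to finitely generated extensions, the separating transcendence basis, and the Galois-descent treatment of the finite separable layer are all fine (the last is a legitimate alternative to the paper's Weil-restriction argument in \Cref{L:albBCfinsep}). But the purely transcendental case, which you correctly isolate as the heart, is left unproven: the assertion that ``the trace-free part of $\alb_{V_{K(t)}/K(t)}$ can receive no generating morphism from $V_{K(t)}$'' is essentially the theorem itself for $K(t)/K$, and neither Lang--N\'eron (a finiteness statement about $B(K(t))$ modulo the trace, not a vanishing statement about morphisms from constant varieties) nor ``rigidity of abelian schemes'' yields it when $V$ is possibly singular and non-proper over an imperfect field; arguments of this flavor in the literature (\emph{e.g.}\ \cite[Cor.~A.5]{wittenberg08}) need $V$ to sit inside a smooth proper variety. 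Your closing descent step also fails over finite $K$, where $K(t)^{\aut(K(t)/K)}\neq K$, so the $t\mapsto t+c$ trick descends nothing. The paper takes a genuinely different route that sidesteps this case entirely: de Jong alterations produce a smooth proper scheme dominating an open of $V$, but only after a finite \emph{purely inseparable} extension $L/K$; the proper Grothendieck--Conrad theorem plus $\aut$-invariance descent (\Cref{L:albBCautLK}, \Cref{L:albBCsmoothalteration}, with \Cref{L:autLKsepclosed} to pass to separable closures) then handles all separable extensions at once --- at the price that \ref{T:main} must be deduced \emph{from} \ref{T:insep} via the tower \eqref{E:tower}, whereas you propose to prove the two parts independently.

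For \ref{T:insep}: your two universal-property maps are exactly those of \Cref{L:albBCinsep}, but the crux --- descending the composite $V_L\to\alb_{V_L/L}\to(\im_{L/K}\alb_{V_L/L})_L$ to a $K$-morphism --- is not established. Your justification (unramifiedness of homomorphisms of abelian varieties, relative Frobenius) does not engage the actual difficulty: the source of the morphism to be descended is the arbitrary geometrically reduced scheme $V_L$, not an abelian variety, so Chow rigidity says nothing about it. The Frobenius idea does give something for free --- for $L\subseteq K^{1/p}$ the twisted composite $V_L\to\alb_{V_L/L}\xrightarrow{F}(\alb_{V_L/L})^{(p)}$ descends by functoriality of relative Frobenius --- but $\lambda$ has strictly smaller kernel than $F$ in general (\emph{e.g.}\ $\lambda=\mathrm{id}$ when $\alb_{V_L/L}$ is constant), so one is left with lifting the descended map through a purely inseparable isogeny $\im_{L/K}\alb_{V_L/L}\to(\alb_{V_L/L})^{(p)}$-descended; since that isogeny has infinitesimal kernel, topological invariance of the \'etale site does not apply, and uniqueness-of-lifts arguments break over the non-reduced ring $L\otimes_KL$. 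That non-reducedness is exactly what the paper's machinery is built to overcome: \Cref{L:ImAbVarSt} reduces, via normalization, \Cref{L:RatMapLK}, and finite separable base change, to pointed geometrically integral separated pieces, and then \Cref{L:rigidity} performs fpqc descent over the one-point Noetherian scheme $\spec(L\otimes_KL)$ using a variant of Mumford's rigidity lemma (\Cref{L:GITrigidity}) applied to a Nagata compactification and an admissible blowup. Nothing in your sketch substitutes for this, so the key step of \ref{T:insep} remains open.
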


Recall that any field extension $L/K$ factors as $L/L'/K$ with $L'/K$ separable and $L/L'$ purely inseparable (see, \emph{e.g.},  \S\ref{S:SepExt}), so that the theorem above completely describes base change for arbitrary field extensions.
We note that our proof of Theorem~\ref{T:main} relies on Theorem~\ref{T:insep}, due to our use of de Jong's regular alterations, and the fact that these regular alterations are only smooth over a purely inseparable extension of the base field.
Theorem \ref{T:main} generalizes the abelian (as opposed to semi-abelian) case
        of \cite[Cor.~A.5]{wittenberg08}, which requires $V$ to be an open
 subset of a smooth proper geometrically integral  scheme over
 $K$; it also generalizes \cite[Prop.~A.3(i)]{mochizuki12} to the case
 of non-perfect base fields, and non-algebraic field extensions.
 Theorem \ref{T:insep} completely explains the behavior studied in
 \cite[Ex.~4.2.7]{brionstructure}.   Theorem \ref{T:insep} implies
 the weaker statement that, for a purely inseparable extension $L/K$, $\alb_{V_L/L}$ and
 $(\alb_{V/K})_L$ differ by a purely inseparable isogeny.  This, as
 well as Theorem \ref{T:main}, has also been secured by Schr\"oer
\cite[Thm.~6.1]{schroeralbanese} for separated schemes, under a
hypothesis on the ring of global functions.

 We note that the geometrically connected hypothesis for $V$ in the
 theorem is necessary, as it is necessary for the existence of an
 Albanese (\Cref{C:GCX}).  In contrast, the geometric reducedness hypothesis in the theorem is more subtle (\Cref{E:NonRedAlb}, \Cref{E:NonRedAlbX}, \Cref{E:NonRedAlbBC}), although one can at least say that the reducedness of $\Gamma(V,\mathcal O_V)$ is necessary in Theorem~\ref{T:main}, and the geometric reducedness of $\Gamma(V,\mathcal O_V)$ is necessary in Theorem~\ref{T:insep}, as the reducedness (but \emph{not} geometric reducedness) of $\Gamma(V,\mathcal O_V)$ is necessary for the existence of an Albanese (\Cref{C:RX}, \Cref{E:GRX}). 
    See also  \Cref{C:Alb-nEx}\ref{C:Alb-1} where we summarize some necessary  conditions for the existence of an Albanese.

\medskip 
 In light of the modern treatment of Albaneses following Grothendieck, our definition of the Albanese, and our subsequent focus on  base change of field may seem slightly archaic.  Indeed,
 Grothendieck would require that the Albanese of $(V,v)$ satisfy the stronger condition that for any morphism of schemes $S\to K$,   and  any pointed $S$-morphism  $f: V_S \to A$ to an abelian
  scheme $A/S$ sending $v_S$ to $\mathbf 0_{A}$, there exists a unique  $S$-homomorphism $g: (\alb_{V/K})_S \to
  A$ such that $g\circ a_S = f$.  Such an Albanese would automatically satisfy arbitrary base change.   
 Grothendieck and Conrad show that for a  pointed \emph{proper}  geometrically connected and geometrically reduced scheme  $V$ of finite type over $K$, the Albanese as defined here satisfies this stronger condition.  Because of Theorem \ref{T:insep}, the
 best possible result
  along these lines 
  without the properness hypothesis 
 is:

  \setcounter{teoalpha}{2}
  \begin{teoalpha}(Theorem \ref{T:groconrad-open}) \label{T:C}
    Let $(V,v)$ be a $K$-pointed  (geometrically) connected and geometrically reduced 
   scheme of finite type over a field $K$.  
   Then for any (inverse limit of) smooth
   morphism of schemes $S  \to \spec K$, and any pointed $S$-morphism  $f: V_S \to A$ to an abelian
   scheme $A/S$ sending $v_S$ to $\mathbf 0_A$, there exists a unique  $S$-homomorphism $g: (\alb_{V/K})_S \to A$ such that $g\circ a_S = f$.
  \end{teoalpha}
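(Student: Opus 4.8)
The plan is to deduce the statement from Theorem~\ref{T:main} (the separable base-change isomorphism) by a generic-fibre argument, after reducing to the case in which $S$ is integral and of finite type over $K$. First I would reduce to $S$ of finite type over $K$: since $A/S$ is an abelian scheme and $f$, together with the sought homomorphism $g$, are data of finite presentation, the standard limit formalism of EGA~IV, \S8 descends $f$ to a smooth finite-type $K$-scheme $S_\alpha$, where one applies the finite-type case and then base-changes back; uniqueness descends likewise. A smooth $K$-scheme is regular, hence a disjoint union of its connected components, each of which is integral; since $g$ and the identity $g\circ a_S=f$ can be verified and glued componentwise, I may assume $S$ is integral and normal (being regular) with generic point $\eta$. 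Because $S$ is smooth over $K$, the function field $\kappa(\eta)=K(S)$ is a separable extension of $K$.

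Over $\eta$ the map $f_\eta\colon V_{\kappa(\eta)}\to A_\eta$ is a pointed morphism to the abelian variety $A_\eta$. By Theorem~\ref{T:main}, the separability of $\kappa(\eta)/K$ yields a canonical isomorphism $(\alb_{V/K})_{\kappa(\eta)}\iso \alb_{V_{\kappa(\eta)}/\kappa(\eta)}$ compatible with the Albanese maps, so $(\alb_{V/K})_\eta$ \emph{is} the Albanese of $V_\eta$. Its universal property factors $f_\eta$ uniquely as $f_\eta=g_\eta\circ a_\eta$ for a pointed morphism $g_\eta\colon (\alb_{V/K})_\eta\to A_\eta$, and since a pointed morphism between abelian varieties is automatically a homomorphism, this produces a unique $\eta$-homomorphism $g_\eta$.

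It remains to spread $g_\eta$ out over $S$ and to verify the required identity. Here I would invoke the standard extension result for homomorphisms of abelian schemes over a normal base: for abelian schemes $B,A$ over a normal integral noetherian $S$ with generic point $\eta$, restriction induces a bijection $\Hom_{S}(B,A)\xrightarrow{\ \sim\ }\Hom_{\eta}(B_\eta,A_\eta)$. Applied to $B=(\alb_{V/K})_S$, this extends $g_\eta$ to a unique $S$-homomorphism $g\colon (\alb_{V/K})_S\to A$. Finally $g\circ a_S$ and $f$ are two $S$-morphisms $V_S\to A$ agreeing over $\eta$; since $V$ is geometrically reduced, $V_S$ is reduced, its generic fibre $V_\eta$ is schematically dense (as $V_S\to S$ is flat), and $A\to S$ is separated, whence $g\circ a_S=f$. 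Uniqueness of $g$ follows from uniqueness of $g_\eta$ together with the injectivity half of the displayed bijection.

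The substantive input is concentrated entirely in the generic-fibre step, i.e.\ in Theorem~\ref{T:main}; this is precisely where smoothness of $S$ enters, through the separability of $K(S)/K$, and it is exactly what fails over a purely inseparable base, in harmony with Theorem~\ref{T:insep}. Beyond citing Theorem~\ref{T:main}, the only real care lies in the extension of homomorphisms over the normal base $S$ (which rests on the properness of $A$ and the normality of $S$) and in the density argument identifying $g\circ a_S$ with $f$ (which rests on the geometric reducedness of $V$); both are standard, so I expect no serious obstacle once Theorem~\ref{T:main} is in hand.
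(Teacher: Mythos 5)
Your proof is correct and takes essentially the same route as the paper's: reduce to $S$ integral, use smoothness to get separability of $\kappa(S)/K$, apply Theorem~A at the generic point, and then extend the resulting homomorphism over the normal base $S$ — your bijection $\Hom_S(B,A)\xrightarrow{\ \sim\ }\Hom_\eta(B_\eta,A_\eta)$ is exactly the Raynaud extension theorem (Faltings--Chai I.2.7) that the paper invokes after spreading out to a dense open. The only divergence is that the paper proves the unpointed torsor version (of which the pointed statement is a corollary), which forces an extra fpqc-descent step to extend the torsor morphism; your pointed-case argument correctly avoids that complication, and your explicit limit-formalism reduction and schematic-density check of $g\circ a_S=f$ are details the paper leaves implicit.
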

We refer to Theorem \ref{T:groconrad-open} for the Albanese torsor version of Theorem~\ref{T:C} valid for (not necessarily $K$-pointed) geometrically connected and geometrically reduced 
scheme of finite type over a field~$K$.

 \medskip 
 
 We originally worked out these arguments as part of our development
 of a functorial approach to regular homomorphisms \cite{ACMVfunctor}.
 Indeed, Theorem \ref{T:main} originally appeared as an appendix to
 \emph{op.~cit.}.  However, since it seemed that these results on
 Albanese varieties might be useful to a wider audience, we decided to
 make them available in a separate document.  Since that preprint
 originally appeared, Laurent and Schr\"oer have studied the existence
 of a relative Albanese for proper families \cite{laurentschroeralbanese}. Moreover, in
 the context of schemes over a field Schr\"oer,
 using different techniques, has extended some of our results under
 the further hypothesis that the scheme be separated \cite{schroeralbanese}.  Combining our
 \Cref{C:Alb-nEx}\ref{C:Alb-1} with \cite[Thm.~p.2]{schroeralbanese}
 provides necessary and sufficient conditions for the existence of an
 Albanese for a \emph{separated} scheme of finite type over a field
 (\Cref{C:Alb-nEx}\ref{C:Alb-2}).  A formulation of Schr\"oer's base
 change result \cite[Thm.~p.4]{schroeralbanese} can be found in
 \Cref{P:Schroeer}.

\subsection*{Acknowledgements}
We are indebted to Brian Conrad for helpful
conversations and to David Grant for explaining the proof of 
\Cref{P:rkcondremix}\ref{P:rkcond2}.  We also thank Stefan Schr\"oer for a
detailed reading and useful comments.

 \section{Albanese varieties}\label{S:AlbaneseDef}

\subsection{Serre's existence theorem}
 Let $V$ be a scheme of
 finite type over a field $K$.
 Recall that an \emph{Albanese datum for $V$} consists of a triple 
 \begin{equation}\label{E:AlbDat}
 (\alb_{V/K}, \
 \alb^1_{V/K},\ a_{V/K}: V\to \operatorname{Alb}^1_{V/K} )
 \end{equation}
  with $\alb_{V/K}$ an abelian variety over
 $K$, $\alb^1_{V/K}$ a torsor under $\alb_{V/K}$ over $K$, and $a:V\to
 \alb^1_{V/K}$ a morphism of $K$-schemes which is initial, meaning  that
 given any triple $(A, P,f:V\to P)$ with $A$ an abelian variety over $K$, $P$ a
 torsor under $A$ over $K$, and $f:V\to P$ a morphism of $K$-schemes, there is a
 unique  $K$-morphism   $g:\operatorname{Alb}^1_{V/K}\to P$   making the following diagram
 commute:
 $$
 \xymatrix{
  V \ar[r]^<>(0.5){a_{V/K}} \ar[rd]_f&   \operatorname{Alb}^1_{V/K} \ar@{-->}[d]_{\exists !}^g\\
&P
 }
 $$
 We will respectively call the three objects in this datum the \emph{ Albanese variety, the Albanese torsor, and the Albanese morphism of $V/K$} (although of course the torsor is itself a variety, too).
 
 \begin{rem}\label{R:Pic0T}
  Recall that if $A$ is an abelian variety over $K$ and $P$ is a torsor under $A$ over $K$, then there is a natural isomorphism $A^\vee\stackrel{\iso}{\longrightarrow}\operatorname{Pic}^0_{P/K}$ 
 (\emph{e.g.}, \cite[\S 2.1]{olssonAV}).  
 Moreover, if $A$ and $A'$ are abelian varieties over $K$, and $P$ and $P'$ are torsors under $A$ and $A'$, respectively, then for any $K$-morphism $g:P\to P'$, there is a unique $K$-homomorphism $\phi:A\to A'$ making $g$ equivariant, and moreover, $g(P)$ is a torsor under $\phi(A)$; more precisely, $\phi$ is the composition $A\stackrel{\iso}{\longrightarrow} (\operatorname{Pic}^0_{P/K})^\vee \stackrel{(g^*)^\vee}{\to} (\operatorname{Pic}^0_{P'/K})^\vee \stackrel{\iso}{\longrightarrow}A'$.  
 In particular, in the definition of the Albanese data above, 
there is a unique  $K$-homomorphism $\operatorname{Alb}_{V/K}\to A$ making $g:\operatorname{Alb}^1_{V/K}\to P$ equivariant.  
\end{rem}

 When $V/K$ is equipped with a $K$-point $v:\operatorname{Spec}K\to V$ over $K$,
 then one can define a \emph{pointed Albanese variety and morphism}, by requiring all the maps in the
 previous paragraph to be pointed.  This reduces to the following situation: a
 \emph{pointed Albanese datum for $(V,v)$} is a pair 
  \begin{equation}\label{E:PAlbDat}
 (\operatorname{Alb}_{V/K},\ a_{V/K,v}:(V,v)\to (\operatorname{Alb}_{V/K},\mathbf 0 ))
  \end{equation}
  where $\operatorname{Alb}_{V/K}$ is an abelian
 variety, and $a_{V/K,v}:V\to \operatorname{Alb}_{V/K}$ is a morphism of $K$-schemes
 taking~$v$ to the zero section $\mathbf 0=\mathbf 0_{\operatorname{Alb}_{V/K}}$, which is initial,  meaning that given any
 $K$-morphism $f:V\to A$  to an abelian variety $A/K$, taking $v$ to the zero
 section $\mathbf 0_A$, there is a unique $K$-homomorphism $g:\operatorname{Alb}_{V/K}\to A$ making
 the following diagram of pointed $K$-morphisms commute:
 $$
 \xymatrix{
  (V,v) \ar[r]^<>(0.5){a_{V/K,v}} \ar[rd]_f& ( \operatorname{Alb}_{V/K},\mathbf 0_{\operatorname{Alb}_{V/K}}) \ar@{-->}[d]_{\exists !}^g \\
 & (A,\mathbf 0_A)
 }
 $$

As we noted in the introduction, the existence of Albanese data in the case of complex projective manifolds is classical, while in the more general setting goes back essentially to Serre \cite{serrealb}.  We
        direct the reader to \cite[Thm.~A.1 and p.836]{wittenberg08}
        for an exposition valid over an arbitrary field; the
        assertion there is made for $V/K$ a geometrically integral
        scheme\footnote{Note that Wittenberg uses the term variety for a scheme of finite type over a field \cite[p.807]{wittenberg08}.} of finite type over a field~$K$, although the
        argument holds under the slightly weaker hypothesis that $V$ is a geometrically connected  and 
        geometrically reduced 
        scheme of finite type over $K$:   
 \begin{teo}[Serre]\label{T:Serre-Alb}
  Let $V$ be a geometrically connected and  geometrically reduced scheme
  of finite type over a field $K$.  Then $V$ admits Albanese data, and if $V$ admits
  a $K$-point, then $V$ admits pointed Albanese data. 
  \end{teo}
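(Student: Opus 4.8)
\emph{Overall plan.} I would construct the Albanese as the maximal abelian-variety quotient receiving a pointed map from $V$, and verify initiality by a dimension-maximality argument. First I reduce to the pointed case. Over the separable closure $K^{\sep}$ the scheme $V$ acquires a rational point (geometric reducedness makes the smooth locus nonempty and dense, and a nonempty smooth scheme over $K^{\sep}$ has a $K^{\sep}$-point), so it suffices to produce pointed Albanese data over $K^{\sep}$ and to descend. The universal property makes the pointed Albanese canonical, hence it carries a canonical $\gal(K^{\sep}/K)$-descent datum; the descended $K$-form of the abelian variety is $\alb_{V/K}$, and the twisting of the base point under the descent datum produces exactly the Albanese torsor $\alb^1_{V/K}$ (equivalently, via \Cref{R:Pic0T}, the $(\operatorname{Pic}^0)^\vee$-torsor). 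So I assume $(V,v)$ has a $K$-point and seek pointed data as in \eqref{E:PAlbDat}.

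\emph{Key step (the dimension bound), the main obstacle.} Among pointed $K$-morphisms $f\colon (V,v)\to(A,\mathbf 0)$ whose image generates $A$ as an algebraic group, the crux is a uniform bound on $\dim A$. I would obtain it by reducing to the classical smooth projective case: choose a compactification $V\inject \bar V$ and, since $K$ may be imperfect, a regular alteration $\pi\colon \til V\to \bar V$ with $\til V$ smooth projective (de Jong); $\til V$ becomes smooth only after a purely inseparable base change, which is the source of the characteristic-$p$ subtleties flagged in the introduction. For smooth projective $\til V$ the Albanese is classical, $\alb_{\til V/K}\iso(\operatorname{Pic}^0_{\til V/K,\,\mathrm{red}})^\vee$. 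A pointed $f\colon V\to A$ induces a rational map $\til V\dashrightarrow A$, which is a morphism because any rational map from a smooth variety to an abelian variety extends everywhere (Weil); as its image still generates $A$, the map factors through $\alb_{\til V/K}\to A$, giving $\dim A\le \dim\alb_{\til V/K}$, a bound independent of $A$. Producing this smooth projective model and controlling all maps to abelian varieties through its $\operatorname{Pic}^0$ is the technical heart, and is where separatedness (for the compactification) and inseparability (for the alteration) intervene.

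\emph{Construction and initiality.} The bound lets me choose a generating pointed map $a\colon(V,v)\to(\alb_{V/K},\mathbf 0)$ with $\dim\alb_{V/K}$ \emph{maximal}. Given an arbitrary pointed $f\colon(V,v)\to(A,\mathbf 0)$, form $(a,f)\colon V\to \alb_{V/K}\times A$ and let $B$ be the abelian subvariety generated by the image. The first projection $B\to\alb_{V/K}$ is surjective since $a$ already generates $\alb_{V/K}$; by maximality it must be an isomorphism, for otherwise $B$ would be a strictly larger abelian variety admitting a generating pointed map from $V$. Composing $\alb_{V/K}\iso B\inject\alb_{V/K}\times A$ with the second projection yields a $K$-homomorphism $g\colon\alb_{V/K}\to A$ with $g\circ a=f$. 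Uniqueness is immediate: two homomorphisms agreeing after precomposition with $a$ agree on the subgroup generated by $a(V)$, which is all of $\alb_{V/K}$, and pointedness of $g$ is automatic. This establishes the pointed universal property, completing the pointed case.

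\emph{Where the hypotheses enter.} Geometric connectedness guarantees that the image of $V$ in any $A$ generates a \emph{connected} subgroup, hence an abelian subvariety, and that this persists after passing to $K^{\sep}$ and to the alteration $\til V$, so the maximality argument is stable under the base changes used above. Geometric reducedness is what makes the rational map $\til V\dashrightarrow A$ and the pullback on $\operatorname{Pic}^0$ well behaved, furnishes the nonempty smooth locus used to find points over $K^{\sep}$, and (after a purely inseparable twist) the smoothness of the alteration. Assembling the pointed statement with the descent of the first paragraph gives both assertions of \Cref{T:Serre-Alb}.
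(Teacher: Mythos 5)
Your overall route (Galois descent from $K^{\sep}$ to reduce to the pointed case, then a boundedness-plus-maximality construction using compactification and de Jong alterations) is essentially the classical one; note that the paper itself gives no proof of \Cref{T:Serre-Alb} but defers to Serre \cite{serrealb} and the exposition in \cite[Thm.~A.1 and p.836]{wittenberg08}. However, your initiality argument has a genuine gap at the step ``by maximality it must be an isomorphism.'' Writing $A_0$ for your chosen maximal-dimensional generating target (to keep it distinct from the true Albanese), surjectivity of $p_1\colon B\to A_0$ together with $\dim B\le\dim A_0$ forces $\dim B=\dim A_0$, hence only that $p_1$ is an \emph{isogeny}; your justification (``otherwise $B$ would be a strictly larger abelian variety'') is incorrect, since $B$ has the \emph{same} dimension and is merely an isogeny cover, which dimension-maximality cannot exclude. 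The gap is fatal for the argument as written: maximality of dimension pins the Albanese down only up to isogeny. Concretely, let $u\colon (V,v)\to(\alb_{V/K},\mathbf 0)$ be the true Albanese map and $\varphi\colon \alb_{V/K}\to A_0:=\alb_{V/K}/G$ the quotient by a nontrivial finite subgroup scheme $G$. Then $a:=\varphi\circ u$ is a pointed generating map of maximal dimension, so it is a legitimate output of your construction; yet for $f:=u$ no homomorphism $g$ with $g\circ a=f$ can exist, because $g\circ\varphi\circ u=u$ would give $g\circ\varphi=\operatorname{id}$ on the subgroup generated by $u(V)$, i.e.\ on all of $\alb_{V/K}$, forcing $\varphi$ to be injective. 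Closing this is exactly the delicate point of the classical proofs: for instance Conrad (as recalled in \Cref{R:GrCon}) dualizes, embedding $A^\vee$ for each generating target $A$ as an abelian subvariety of one fixed Picard scheme, where an isogeny tower becomes an increasing chain of equal-dimensional abelian subvarieties, and such inclusions are equalities. Some device of this kind --- something that dominates all maximal-dimensional generating maps simultaneously, not just dimension counting --- must replace your step.

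Two further points. First, \Cref{T:Serre-Alb} carries no separatedness hypothesis, so the Nagata compactification $V\inject\bar V$ you invoke for the dimension bound need not exist; since $V$ is connected and reduced, the abelian subvariety generated by $f(V)$ is already generated by the differences of $f$ on dense open affine subsets of the irreducible components, so the alteration argument should be run on such affine (hence separated) pieces, one component at a time --- a reduction that is needed anyway because de Jong's theorem applies to integral schemes, while your $V$ may be reducible. Second, your descent step from $K^{\sep}$ is sound in outline (the descent datum lives over a finite subextension, and torsors under abelian varieties are quasi-projective, so descent is effective), and the dimension bound itself survives the purely inseparable base change needed for smoothness of the alteration, since dimension is insensitive to field extension; so the isogeny issue above is the one point separating your proposal from a correct proof.
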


 \subsection{Necessity of geometric connectedness and geometric reducedness}

We now briefly discuss the hypotheses in \Cref{T:Serre-Alb} that $V$
be geometrically connected and geometrically reduced.  In short, the
geometric connectedness of $V$ is necessary for $V$ to admit 
Albanese data (\Cref{C:GCX}), while the
geometric reducedness of $V$ is not (\Cref{E:NonRedAlb}). 
  The situation is summarized in \Cref{C:Alb-nEx}\ref{C:Alb-1}.

\medskip 
The basic starting point is the following existence result, which states that given an abelian variety and a zero-dimensional scheme, there is a second abelian variety containing the zero dimensional scheme as a closed subscheme, and  which admits no non-trivial homomorphisms from the first abelian variety:

\begin{pro}
\label{P:embed} 
 Given an abelian variety $A/K$ and a finite dimensional $K$-algebra $R$ with each residue field a simple extension of $K$ that is either separable or purely inseparable,  there exists an abelian variety $A'/K$ such that there is a closed embedding of $K$-schemes  $\spec R \hookrightarrow A'$ and such that $\hom(A_{\bar K}, A'_{\bar K}) = 0$.
\end{pro}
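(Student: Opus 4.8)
The plan is to build $A'$ by hand so that it simultaneously carries $\spec R$ as a closed subscheme and is, up to isogeny over $\bar K$, disjoint from $A$. First I would pass to the local situation. Writing $R=\prod_{i=1}^r R_i$ as a product of local Artinian $K$-algebras with residue fields $\kappa_i/K$ and embedding dimensions $m_i:=\dim_{\kappa_i}\mathfrak m_i/\mathfrak m_i^2$, a closed immersion $\spec R\hookrightarrow A'$ is the same as a family of closed immersions $\spec R_i\hookrightarrow A'$ with pairwise distinct (hence disjoint) supports. Since a closed immersion induces an isomorphism on residue fields at the image point, and since abelian varieties are smooth, the data I must produce on $A'$ is: for each $i$, a closed point $b_i$ with $\kappa(b_i)\cong\kappa_i$ at which $A'$ has dimension $\ge m_i$, together with a $K$-algebra surjection $\widehat{\calo}_{A',b_i}\twoheadrightarrow R_i$. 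I would observe at the outset that when all $\kappa_i=K$ (for instance when $K$ is separably closed) the statement is easy: take any $B$ with $\hom(A_{\bar K},B_{\bar K})=0$ and $\dim B\ge\max_i m_i$ and enough rational points (e.g.\ $B=E^N$), and embed the $\spec R_i$ into the complete local rings $\widehat{\calo}_{B,x}\cong K[[t_1,\dots,t_{\dim B}]]$ at distinct rational points $x$. It is the residue-field constraint that forces the general argument.

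For the two global constraints---realizing the prescribed residue fields and killing geometric homomorphisms---I would produce $A'$ as a Jacobian. I would choose a smooth projective geometrically connected curve $C/K$, equipped with a $K$-rational point $Q$, that passes through closed points $x_1,\dots,x_r$ with $\kappa(x_i)\cong\kappa_i$, and whose Jacobian is geometrically simple of dimension $g>\max(\dim A,\,m_1,\dots,m_r)$; then I set $A'=\operatorname{Jac}(C)$. Geometric simplicity together with $g>\dim A$ gives $\hom(A_{\bar K},A'_{\bar K})=0$, since a nonzero homomorphism would have image a nonzero---hence all---of the simple $A'_{\bar K}$, forcing $\dim A\ge g$. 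The Abel--Jacobi morphism $C\hookrightarrow A'$ based at $Q$ is a closed immersion (as $g\ge 1$) and therefore carries the $x_i$ to closed points $b_i\in A'$ with $\kappa(b_i)\cong\kappa_i$, at which $A'$ is smooth of dimension $g\ge m_i$.

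It remains to produce the local surjections and algebraize. Since $A'$ is smooth over $K$ at $b_i$ with residue field $\kappa_i$, the equal-characteristic Cohen structure theorem gives $\widehat{\calo}_{A',b_i}\cong\kappa_i[[t_1,\dots,t_g]]$, and $R_i$ likewise admits a coefficient field, so $R_i\cong\kappa_i[[s_1,\dots,s_{m_i}]]/J$. Choosing compatible coefficient fields and sending $t_1,\dots,t_{m_i}$ to lifts of $s_1,\dots,s_{m_i}$ and the remaining $t_j$ to $0$ yields a $K$-algebra surjection $\widehat{\calo}_{A',b_i}\twoheadrightarrow R_i$; this step is insensitive to whether $\kappa_i/K$ is separable. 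Because $R_i$ is Artinian the resulting map factors through $\calo_{A',b_i}$ and defines a closed immersion $\spec R_i\hookrightarrow A'$ supported at $b_i$. As the $b_i$ are distinct, these assemble into the desired closed immersion $\spec R\hookrightarrow A'$.

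The hard part is the curve construction: I must interpolate prescribed closed points of residue fields $\kappa_i$---which may be inseparable and non-monogenic over an imperfect $K$---while simultaneously guaranteeing that the Jacobian is geometrically simple of large genus, and I must do this over an arbitrary (possibly finite) base field, where the usual Bertini genericity arguments require care. The interpolation itself I would obtain by first embedding each $\spec\kappa_i$ as a closed point $P_i$ of some $\mathbb A^{N}_K$ via a choice of $K$-algebra generators, then cutting out a geometrically integral curve through the $P_i$ and $Q$ that is smooth at each $P_i$ by a suitable (Poonen-style, if $K$ is finite) Bertini argument, and finally passing to the smooth projective model; the genericity needed to force geometric simplicity of a large-genus Jacobian is the delicate point to pin down uniformly in $K$. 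If this simultaneous interpolation proves awkward, a fallback is to realize the separable parts of the $\kappa_i$ by Weil restriction of a fixed geometrically simple abelian variety and to absorb the inseparable parts into the local structure, but the essential obstacle in every approach is the faithful realization of the prescribed residue fields on an abelian variety that is geometrically isogeny-disjoint from $A$.
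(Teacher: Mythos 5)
Your skeleton is the same as the paper's---reduce to $R$ local Artinian, realize its residue field at a closed point of a curve, pass to the Jacobian to kill homomorphisms from $A_{\bar K}$, and finish by a local argument at that point---but two of your steps have genuine gaps, and they sit exactly where the content of \Cref{P:embed} lies. The first is the formal step, which fails when $\kappa_i/K$ is inseparable, contrary to your claim that it ``is insensitive to whether $\kappa_i/K$ is separable.'' Cohen's theorem gives $\widehat{\mathcal O}_{A',b_i}\iso\kappa_i[[t_1,\dots,t_g]]$ only as rings, not as $K$-algebras: a coefficient field containing $K$ need not exist. Since your argument uses only smoothness and the residue field, it would apply equally to $\mathbb A^1_K$ over $K=\mathbb F_p(t)$ at the point $b=V(x^p-t)$, where it is false: a $K$-algebra map $\kappa(b)=K(t^{1/p})\to\widehat{\mathcal O}_{\mathbb A^1_K,b}$ would produce $y$ with $y^p=t$, and then $(x-y)^p=x^p-t$ would exhibit the uniformizer as a $p$-th power, which is impossible in a discrete valuation ring. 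The target Artinian rings need not split either: in $R=K[x]/((x^p-t)^2)$, a $p$-th root of $t$ would make $x^p-t$ the $p$-th power of a nilpotent, hence zero. Note that this $R$ embeds in $\mathbb A^1_K$ tautologically, so the embeddings you need do exist---but they are genuinely non-split over $K$ and cannot be manufactured from coefficient fields. Your numerical invariant is also wrong: for $\kappa/K$ purely inseparable of degree $p$, the algebra $R=\kappa\otimes_K\kappa\iso\kappa[u]/(u^p)$ has $\dim_\kappa\mathfrak m/\mathfrak m^2=1$ yet admits no closed immersion into $\mathbb A^1_K$ (every element of $R$ has $p$-th power in the image of $K$, whereas any length-$p^2$ local closed subscheme of $\mathbb A^1_K$ with residue field $\kappa$ is of the form $\spec K[T]/((T^p-c)^p)$, in which $T^p\notin K$); the relevant quantity is the number of $K$-algebra \emph{generators}, not $\dim_{\kappa_i}\mathfrak m_i/\mathfrak m_i^2$. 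The paper sidesteps all of this: it embeds $\spec R$ into $\mathbb A^n_K$ by choosing $K$-algebra generators, arranges that $A'$ has a closed point with the same residue field as the image point, and transports the embedding through an \'etale chart of $A'$ over $\mathbb A^n_K$ matching the two marked points, which identifies the complete local rings; no splitting is ever invoked.

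The second gap is the curve construction, which you explicitly leave open; it is not a loose end but the heart of the proposition. Bertini--Poonen type theorems produce smooth geometrically integral curves through prescribed closed subschemes, but they give no control on the endomorphism algebra of the Jacobian, so no ``genericity'' of that kind forces geometric simplicity. This is exactly what \Cref{P:rkcondremix} supplies: for infinite $K$, the paper takes the explicit family $y^2=f(x)\,x\,h_i(x)(x-t)$, where $f$ is the minimal polynomial of a primitive element of $L/K$ (so that $V(y)$ contains $\spec L$ in every fiber), and combines Hall's big monodromy theorem \cite{hallbigmonodromy} with an $\ell$-adic argument and Hilbert irreducibility to find a fiber whose Jacobian satisfies $\End(A_{i,\bar K})\iso\integ$; for finite $K$ it uses the Howe--Zhu existence theorem \cite{howezhu02} together with Weil-bound point counts. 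Note that even this construction uses separability of $L/K$ (it is what makes $f$ squarefree and the curve smooth), so realizing \emph{inseparable} residue fields on an abelian variety isogeny-disjoint from $A$ is genuinely delicate; your Weil-restriction fallback does not provide this, since Weil restrictions along nontrivial extensions are never geometrically simple, and the inseparable residue fields are exactly where your first step already breaks. As written, the proposal therefore treats only the case where all $\kappa_i/K$ are separable, and even that case only modulo the unconstructed curve.
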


The proof of  \Cref{P:embed} is somewhat lengthy, and so to maintain the flow of the ideas in this subsection,  we postpone the proof until \S \ref{S:embed}.  As an immediate consequence of  \Cref{P:embed}, we have the following:

\begin{teo}
  \label{C:integral} 
  Let $V$ be a scheme of finite type over a field $K$, and suppose there exists 
  a nontrivial finite dimensional $K$-algebra $R$ with $R\subseteq \Gamma(V, \mathcal O_V)$. 
  Then given an  abelian variety $A/K$,  a torsor $P/K$  under
$A$, and a $K$-morphism $f:V \to P$, there exists a torsor $P'/K$ under an abelian
variety $A'/K$ and a $K$-morphism $f':V \to P'$ that does not factor through
$f$.  In particular, $V$ does not admit an Albanese datum.
\end{teo}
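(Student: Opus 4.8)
The plan is to feed the subalgebra $R \subseteq \Gamma(V,\mathcal O_V)$ into \Cref{P:embed} to manufacture a target abelian variety that both ``sees'' $R$ geometrically and receives no nonzero homomorphism from $A$, and then to use the rigidity recorded in \Cref{R:Pic0T} to obstruct any factorization through $f$.

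First I would observe that the inclusion $R \hookrightarrow \Gamma(V,\mathcal O_V)$ of $K$-algebras is precisely the datum of a $K$-morphism $\psi \colon V \to \spec R$, and that, the ring map being injective, $\psi$ is scheme-theoretically dominant. Applying \Cref{P:embed} to the pair $(A,R)$ produces an abelian variety $A'/K$ together with a closed immersion $\iota \colon \spec R \hookrightarrow A'$ and $\hom(A_{\bar K},A'_{\bar K}) = 0$. I then set $P' = A'$, viewed as a torsor under itself, and take $f' = \iota \circ \psi \colon V \to P'$. Since $\psi$ is scheme-theoretically dominant and $\iota$ is a closed immersion, the scheme-theoretic image of $f'$ is exactly the closed subscheme $\spec R \subseteq A'$.

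The crux is to show $f'$ does not factor through $f$. Suppose to the contrary that $f' = g \circ f$ for some $K$-morphism $g \colon P \to P'$. By \Cref{R:Pic0T}, $g$ is equivariant for a unique $K$-homomorphism $\phi \colon A \to A'$, and $g(P)$ is a torsor under $\phi(A)$. Base-changing to $\bar K$ exhibits $\phi_{\bar K}$ as an element of $\hom(A_{\bar K},A'_{\bar K}) = 0$, whence $\phi = 0$; thus $g(P)$ is a torsor under the trivial group, i.e.\ a single reduced $K$-point $q = \spec K \hookrightarrow A'$. As $f' = g \circ f$ factors through $g(P) = q$, its scheme-theoretic image is contained in the closed subscheme $\spec K$. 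Comparing with the previous paragraph, the closed subscheme $\spec R$ would then embed in $\spec K$, forcing a surjection $K \twoheadrightarrow R$ and hence $R = K$, contrary to the nontriviality of $R$. This contradiction shows no such $g$ exists. The final assertion is then formal: if $(\alb_{V/K},\alb^1_{V/K},a_{V/K})$ were an Albanese datum, applying the construction with $(A,P,f) = (\alb_{V/K},\alb^1_{V/K},a_{V/K})$ would produce a morphism to a torsor failing to factor through $a_{V/K}$, contradicting the universal property.

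I expect the one genuinely delicate point --- granting \Cref{P:embed} --- to be that the final contradiction must be run at the level of scheme-theoretic images rather than underlying spaces: when $R$ is local Artinian, $\spec R$ is a single (possibly fat) point, so a purely topological argument would fail, and one must use the scheme structure (equivalently, the impossibility of a surjection $K \twoheadrightarrow R$ when $R \neq K$). This is exactly where the hypothesis ``nontrivial'' (i.e.\ $R \neq K$, not merely $R \neq 0$) is used.
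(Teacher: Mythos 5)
Your proposal is correct and follows essentially the same route as the paper's proof: apply \Cref{P:embed} to get $A'$ with $\hom(A_{\bar K},A'_{\bar K})=0$ and $\spec R\hookrightarrow A'$, define $f'$ as the scheme-theoretically surjective map $V\to\spec R$ followed by the closed immersion, and use the equivariance of \Cref{R:Pic0T} to force any factorizing $g$ to be constant, contradicting $R\neq K$ at the level of scheme-theoretic images. Your added care about scheme-theoretic (rather than topological) images is exactly the point the paper's phrase ``would then imply $\operatorname{Spec}R\cong\operatorname{Spec}K$'' encapsulates.
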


\begin{proof} The first claim is that $R$ contains a nontrivial finite dimensional  $K$-subalgebra $R'\subseteq R$ with residue fields that are separable extensions of $K$ or simple purely inseparable extensions of $K$.  Indeed,  $R$  being Artinian is a direct sum $R = \oplus_{i=1}^c
  R_i$ of finite local $K$-algebras $(R_i, \mathfrak m_i)$.  For each
  $i$, we may and do choose a sub-algebra $R'_i$ such that, if the
  residue field $\kappa_i:= R_i/\mathfrak m_i$ is a nontrivial
  extension of $K$, then $\kappa'_i := R'_i/(\mathfrak m_i\cap R_i)$ is
  a nontrivial extension which is either separable or simple and
  purely inseparable.  Indeed, if $\kappa_i$ is purely inseparable,
  let $K_i \subseteq \kappa_i$ be a sub-$K$-extension of degree
  $\operatorname{char}(K_i)$, and otherwise let $K_i$ be the separable
  closure of $K$ in $\kappa_i$.  In either case, let $R_i =
  \varpi_i^{-1}(K_i)$, where $\varpi_i:R_i \to \kappa_i$ is the
  projection.  If $\kappa_i = K$ then we simply set $R'_i = R_i$.  
  Finally we let $R' = \oplus_{i=1}^c R'_i$.  
    
  By virtue of \Cref{P:embed}, let $A'/K$ be an abelian variety such that $\operatorname{Hom}(A,A')=
  0$,  and such that there is a closed immersion $\operatorname{Spec}R'\hookrightarrow A'$. 
Since $V\to \operatorname{Spec}\Gamma(V,\mathcal O_V)$ is
scheme-theoretically surjective (for any ring $S$ we have
$\operatorname{Hom}(V,\operatorname{Spec}
S)=\operatorname{Hom}(\operatorname{Spec}\Gamma(V,\mathcal
O_V),\operatorname{Spec} S)$) and since the inclusion $R'\subseteq
\Gamma(V,\mathcal O_V)$ induces a scheme-theoretic surjection (for
affine schemes the scheme-theoretic image is determined by the
factorization of a ring homomorphism into a surjection followed by an
inclusion), we have that $V\to \operatorname{Spec}R'$ is
scheme-theoretically surjective.

Now let  $f'$  be the composition $f':V\twoheadrightarrow
\operatorname{Spec}R' \hookrightarrow A'$. We obtain a diagram
  \begin{equation}
    \label{E:impossfact}
  \xymatrix{
    V \ar[dr]^{f'} \ar[d]_f \ar@{->>}[r]^{} & \operatorname{Spec}R'\ar@{^{(}->}[d] \\
    P \ar@{-->}[r]_{g} & A',
  }
  \end{equation}
so that if we had a 
  factorization $f' = g \circ f$, as indicated by the dashed arrow, 
  then the  morphism $g$ would be an equivariant morphism over a $K$-homomorphism $\phi:A
  \to A'$ of abelian varieties.  The hypothesis that $\operatorname{Hom}(A,A')=0$, would force $\phi$ to  be the trivial
  map, so that $g$ would be constant, with image a $K$-point of $A'$.
  The commutativity of the diagram would then imply
  $\operatorname{Spec}R'\cong \operatorname{Spec}K$, which we have
  assumed is not the case.
\end{proof}

\begin{cor}[Geometric connectedness is necessary]\label{C:GCX}
Suppose that $V$ is a scheme of finite type over a field $K$, and $V$ fails to be geometrically connected.  Then $V$ does not admit an Albanese datum.
\end{cor}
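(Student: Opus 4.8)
The plan is to deduce this from \Cref{C:integral}: it suffices to exhibit a nontrivial finite-dimensional $K$-subalgebra $R \subseteq \Gamma(V,\mathcal O_V)$, since then $R$ supplies the hypothesis of that theorem and $V$ admits no Albanese datum. Write $B = \Gamma(V,\mathcal O_V)$. Because $V$ is of finite type over a field it is quasi-compact and quasi-separated, so the formation of global sections commutes with the flat base change along $\spec L \to \spec K$; that is, $\Gamma(V_L,\mathcal O_{V_L}) = B \otimes_K L$ for every extension $L/K$. Since idempotents of the ring of global sections correspond to open-and-closed subschemes, $V_L$ is connected precisely when $B \otimes_K L$ has no nontrivial idempotent. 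Hence the failure of geometric connectedness means that $B \otimes_K \bar K$ carries a nontrivial idempotent. As geometric connectedness may be tested over the separable closure (equivalently, base change along the purely inseparable extension $\bar K/K^{\sep}$ is a universal homeomorphism and so induces a bijection on idempotents), already $B \otimes_K K^{\sep}$ carries a nontrivial idempotent $e$.

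The crux is then to descend $e$ to a finite-dimensional subalgebra of $B$ itself, which is the only real content in the case where $V$ is connected but not geometrically connected, so that no nontrivial idempotent is visible in $B$. I would choose a finite Galois extension $K_1/K$ inside $K^{\sep}$ with $e \in B \otimes_K K_1$ --- possible because $e$ is defined over a finite separable subextension, which I enlarge to its Galois closure --- and set $G = \gal(K_1/K)$, acting $K_1$-semilinearly on $B\otimes_K K_1$ with invariants $B$. Let $D \subseteq B \otimes_K K_1$ be the $K_1$-subalgebra generated by the finite orbit $\{\sigma e : \sigma \in G\}$. Being generated by finitely many commuting idempotents, $D$ is finite-dimensional over $K_1$, and it is manifestly $G$-stable. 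Galois descent for vector spaces then gives $D = R \otimes_K K_1$ with $R := D \cap B$, so that $\dim_K R = \dim_{K_1} D \ge 2$, the lower bound coming from the $K_1$-linear independence of $1$ and the nontrivial idempotent $e$ inside $D$. Thus $R$ is a finite-dimensional $K$-subalgebra of $\Gamma(V,\mathcal O_V)$ of dimension at least two, and \Cref{C:integral} finishes the proof.

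I expect the main obstacle to be exactly this descent step in the connected-but-not-geometrically-connected case: a nontrivial idempotent exists only after an algebraic base change, and one must manufacture from it an honest finite-dimensional constant subalgebra defined over $K$. The reduction to a separable extension $K_1/K$ is what makes Galois descent available and is essential; it is legitimate precisely because purely inseparable extensions do not affect connected components. The disconnected case is subsumed, corresponding to $K_1 = K$ and $R \cong K \times K$.
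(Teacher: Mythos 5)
Your proof is correct, and it follows the paper's overall skeleton: reduce via \Cref{C:integral} to producing a nontrivial finite-dimensional $K$-subalgebra of $\Gamma(V,\mathcal O_V)$, then manufacture one from a finite Galois extension $K_1/K$ over which $V$ becomes disconnected. Where you diverge is in the descent mechanism. The paper splits into two cases (disconnected over $K$, versus connected but geometrically disconnected) and, in the hard case, works explicitly: it observes that $\gal(L/K)$ permutes the component idempotents transitively, takes the stabilizer $H$ of one of them, and writes down by hand a Galois-invariant copy of the subfield $L^H$ inside $\Gamma(V_L,\mathcal O_{V_L})$, which then lies in $\Gamma(V,\mathcal O_V)=\Gamma(V_L,\mathcal O_{V_L})^{\gal(L/K)}$; this identifies the descended algebra concretely as a field of degree equal to the number of geometric components, with no appeal to descent machinery beyond taking invariants. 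You instead treat both cases uniformly: take the $K_1$-algebra $D$ generated by the Galois orbit of a single nontrivial idempotent (finite-dimensional since it is generated by commuting idempotents, and visibly $G$-stable) and invoke Speiser-type Galois descent for subspaces to get $R=D^G$ with $R\otimes_K K_1=D$, whence $\dim_K R=\dim_{K_1}D\ge 2$. This buys you less bookkeeping (no coset analysis, and the disconnected case is the trivial instance $K_1=K$) at the price of invoking the general descent theorem and not identifying $R$ beyond a dimension bound; note also that both arguments ultimately rest on the same flat base change fact $\Gamma(V_L,\mathcal O_{V_L})=\Gamma(V,\mathcal O_V)\otimes_K L$, which you state explicitly and the paper uses implicitly when taking Galois invariants.
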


\begin{proof}
  By Corollary \ref{C:integral}, it suffices to show that $\Gamma(V,
  \calo_V)$ contains a nontrivial finite $K$-algebra.
Write $V = \coprod_{i=1}^c V_i$ as a disjoint union of  connected
$K$-schemes. Then there are idempotents $e_i \in \Gamma(V, \mathcal
O_V)$ such that $\Gamma(V_i, \mathcal O_{V_i}) = \Gamma(V_i, \mathcal
O_V) = e_i \Gamma(V, \mathcal O_V)$.  Therefore, if $V$ is
disconnected, then $c>1$ and
$R := \bigoplus_{i=1}^c K e_i  \iso K^{\oplus c}
\subseteq \Gamma(V, \calo_V)$ is a nontrivial finite $K$-algebra. 

Otherwise, assume $V$ is connected, but geometrically disconnected.  Let $L/K$ be a finite Galois
extension such that $V_L$ is
disconnected (\emph{e.g.}, \cite[Prop.~5.53]{GW20}).  As before, write $V_L =
\coprod_{i=1}^{c} W_i$ as a disjoint union of $c>1$ 
connected $L$-schemes, and let $e_1, \dots, e_c \in \Gamma(V_L, \mathcal O_{V_L})$
be the corresponding idempotents.  Then $\operatorname{Gal}(L/K)$ permutes the $e_i$, and
acts transitively because $V$ itself is connected.  
As before, we have $\bigoplus_{i=1}^cLe_i\cong L^{\oplus c}\subseteq \Gamma(V_L,\mathcal O_{V_L})$.  In fact, letting $H \subseteq \operatorname{Gal}(L/K)$ be the stabilizer of $e_1$, 
then  $c=|\operatorname{Gal}(L/K)|/|H|$, and we can enumerate the components of $V_L$ by the cosets  $g_1H,\dots, g_cH$ for some elements $g_1,\dots,g_c\in \operatorname{Gal}(L/K)$.  
In this notation, we can take 
$e_1 = (1,0,\dots,0)$, $e_2=(0,1,0,\dots,0)$, etc., and then the  action of $\gal(L/K)$ on $L^{\oplus c}$ is given by  $$g\cdot (\ell_{g_1H}, \ell_{g_2H},...,\ell_{g_nH}) = (g\cdot \ell_{g^{-1}g_1H}, g\cdot \ell_{g^{-1}g_2H},\dots,g\cdot \ell_{g^{-1}g_nH});$$
in other words, $\operatorname{Gal}(L/K)$ permutes the components according to  its action on the  cosets of $H$,  and then acts on the entries according to the action of the Galois group on $L$.
 
Now, because $\operatorname{Gal}(L/K)$
does not fix $e_1$, it follows that $H$ is a proper subgroup of $\operatorname{Gal}(L/K)$, and its fixed field $L^H$ 
satisfies $[L^H:K] = |\operatorname{Gal}(L/K)|/\abs H > 1$.  From the description of the action of $\operatorname{Gal}(L/K)$ on  $L^{\oplus c}$, it follows that there is a copy of $L^H$ in $\bigoplus ^cL\subseteq \Gamma(V_L,\mathcal O_{V_L})$ given by
$$
\ell\mapsto (g_1\cdot \ell, g_2\cdot \ell, \dots,g_n\cdot \ell),
$$
which is, by construction, invariant under the action of $\operatorname{Gal}(L/K)$.
           Thus $\Gamma(V, \mathcal O_V)$, being the $\operatorname{Gal}(L/K)$-invariants of $\Gamma(V_L,\mathcal O_{V_L})$,
contains a  ring isomorphic to the finite nontrivial $K$-algebra~$L^H$.   
\end{proof}

\begin{rem}[Geometric connectedness of $\Gamma(V,\mathcal O_V)$ is necessary]\label{R:geomconn} We note that for  a scheme $V$ of finite type over a field $K$, since $V$ is disconnected if and only if $\spec \Gamma(V,\mathcal O_V)$ is disconnected, we have that $V$ 
 fails to be geometrically connected if and only if  $\operatorname{Spec} \Gamma(V,\mathcal O_V)$ fails to be geometrically connected.
        \end{rem}

We now turn our attention to the geometric reducedness hypothesis in \Cref{T:Serre-Alb}, which is more subtle.   
We first observe that since there are non-reduced schemes $V$ of finite type over a field $K$  such that $\Gamma(V,\mathcal O_V)$ does not admit 
  any non-trivial finite $K$-subalgebra $R$ (see \emph{e.g.}, Example \ref{E:NonRedAlb}), the proof of Corollary \ref{C:GCX} cannot be used to rule out the existence of an Albanese in the case where $V$ is non-reduced.    In fact, there are non-reduced schemes that admit Albaneses: 

\begin{exa}[Non-reduced scheme with an Albanese]\label{E:NonRedAlb}
Let $K$ be a field,  $H\subseteq \mathbb P^2_K$ be a line, and take $V=2H\subseteq \mathbb P^2_K$.  
Then $\operatorname{Alb}_{V/K}=\operatorname{Spec}K$, $\operatorname{Alb}^1_{V/K}=\operatorname{Spec}K$, and $a:V\to \operatorname{Alb}^1_{V/K}$ is the structure map (of $V$ as a $K$-scheme).
Indeed, observe first that 
taking the long exact sequence in cohomology associated to $0\to \mathcal O_{\mathbb P^2_K}(-2H)\to \mathcal O_{\mathbb P^2}\to \mathcal O_V\to 0$, one has that $\Gamma(V,\mathcal O_V)=K$.    Therefore, as under the standard identification $\operatorname{Hom}(V,\operatorname{Spec}R)=\operatorname{Hom}(R,\Gamma (V,\mathcal O_V))$ for a ring $R$, every morphism $V\to \operatorname{Spec}R$ factors through the natural morphism $V\to \operatorname{Spec}\Gamma(V,\mathcal O_V)$, then for any scheme-theoretically surjective morphism $V\to \operatorname{Spec}R$, we have $R=K$.   
 Now, since $V_{\operatorname{red}}=\mathbb P^1_K$, then given any $K$-morphism $V\to P$ to a torsor $P$ under an abelian variety $A/K$, the composition $\mathbb P^1_K=V_{\operatorname{red}}\hookrightarrow V\to P$ has set-theoretic image a $K$-point of $P$.  Thus the scheme-theoretic image of $V$ in $P$ is an affine scheme $Z=\operatorname{Spec}R$ where $R$ is a finite $K$-algebra.  Thus $Z=\operatorname{Spec}K$ and we are done.
\end{exa}

 Nevertheless, \Cref{C:integral} does give examples of non-reduced schemes that do not admit Albaneses:

\begin{cor}[Reducedness of $\Gamma(V,\mathcal O_V)$ is necessary] \label{C:RX}
Suppose that $V$ is a scheme of finite type over a field $K$, and $\operatorname{Spec}\Gamma(V,\mathcal O_V)$ fails to be reduced. Then $V$ does not admit an Albanese datum.
\end{cor}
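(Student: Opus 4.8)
The plan is to deduce this immediately from \Cref{C:integral}: it suffices to exhibit a nontrivial finite-dimensional $K$-subalgebra $R$ of $S := \Gamma(V,\mathcal O_V)$, since once such an $R$ is produced, \Cref{C:integral} yields that $V$ admits no Albanese datum. The one structural point to keep in mind is that, absent a properness hypothesis, $S$ itself may be infinite-dimensional over $K$, so we cannot simply take $R = S$ as one would in the proper case; instead we must carve a small subalgebra out of the nilpotents, which is exactly what the failure of reducedness provides.

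First I would unwind the hypothesis. The scheme $\operatorname{Spec} S$ fails to be reduced precisely when the ring $S$ contains a nonzero nilpotent element. In particular $S \neq 0$, so $V$ is nonempty and the structure map $K \to S$ is injective, a field mapping injectively into any nonzero ring; I will henceforth identify $K$ with $K\cdot 1 \subseteq S$. Next I would normalize the nilpotent to have square zero: choosing $x \in S$ nonzero with $x^n = 0$ and taking $n \geq 2$ minimal, the element $y := x^{n-1}$ satisfies $y \neq 0$ (by minimality) and $y^2 = x^{2n-2} = 0$ (since $2n-2 \geq n$ for $n\geq 2$). Moreover $y \notin K\cdot 1$: a scalar multiple of $1$ is nilpotent only if the scalar is nilpotent in the field $K$, hence zero, contradicting $y \neq 0$.

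Finally I would set $R := K\cdot 1 + K\cdot y$. Because $y^2 = 0$, this span is closed under multiplication and therefore is a $K$-subalgebra of $S$; since $1$ and $y$ are linearly independent over $K$, it has $\dim_K R = 2$, so $R \cong K[\epsilon]/(\epsilon^2)$ is a nontrivial finite-dimensional $K$-algebra. Applying \Cref{C:integral} to this $R$ then gives the conclusion. I do not expect a genuine obstacle here, as the substantive work is already packaged in \Cref{C:integral} (and ultimately \Cref{P:embed}); the only subtleties are the reduction from an arbitrary nilpotent to a square-zero one and the verification that $y$ is not a scalar, both of which are elementary.
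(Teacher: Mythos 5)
Your proof is correct and follows essentially the same route as the paper: both extract a nontrivial finite-dimensional $K$-subalgebra of $\Gamma(V,\mathcal O_V)$ from a nonzero nilpotent and then invoke \Cref{C:integral}. The only (cosmetic) difference is that you normalize to a square-zero element to get $R\cong K[\epsilon]/(\epsilon^2)$, whereas the paper takes $R=K[r]\cong K[x]/(x^{n+1})$ directly.
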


\begin{proof} If  $\operatorname{Spec}\Gamma(V,\mathcal O_V)$ is non-reduced, then there exists a nilpotent element  $r\in \Gamma(V_L,\mathcal O_{V_L})$ 
such that $r^n\ne 0$ and $r^{n+1}=0$ for some natural number $n$.
  Then consider the subring $K[x]/(x^{n+1})\cong R:= K[r]\subseteq \Gamma(V,\mathcal O_{V})$.  
   We conclude using \Cref{C:integral}.
\end{proof}

\begin{exa}[Non-reduced scheme with no Albanese]\label{E:NonRedAlbX}
Let  $K$ be a field, let $L/K$ be a nontrivial finite purely inseparable
field extension,  
 let $Y$ be any scheme of finite type over $L$, and let $V=Y\times_KL$.  Then $\Gamma(V, \mathcal O_V)$ contains $L\otimes_KL$ and thus has nontrivial nilpotents; by  \Cref{C:RX}, $V/L$ does not admit an Albanese datum.  
Similarly, if $Y$ is any scheme of finite type over $K$ and $V=Y_{ K[\epsilon]/(\epsilon^2)}$, then $V/K$ does not admit an Albanese datum. 
As a consequence, in contrast with Example \ref{E:NonRedAlb} where we saw that the nonreduced scheme $V=2H\subseteq \mathbb P^2_K$ admits an Albanese datum over $K$,  we have that the nonreduced scheme $\mathbb P^1_{K[\epsilon]/(\epsilon^2)}$ does \emph{not} admit an Albanese datum over~$K$, even as the reductions of both schemes are isomorphic to $\mathbb P^1_K$, assuming $H$ is chosen with a $K$-point. 
 \end{exa}

\begin{exa}[Reduced but geometrically non-reduced scheme with no
  Albanese]\label{E:NonGeomRedAlbX}
Let $K$ be a field, let $L/K$ be a nontrivial finite purely
inseparable field exension, and let $Y/L$ be a smooth irreducible
variety.  As a $K$-scheme, $Y$ is reduced but not geometrically
reduced; and the presence of the nontrivial finite $K$-algebra $L$
 in
$\Gamma(Y, \mathcal O_Y)$ prevents $Y$ from admitting an Albanese
datum.
\end{exa}

Allowing $Y$ to be affine in \Cref{E:NonGeomRedAlbX} raises the
possibility that the geometric reducedness of $\spec \Gamma(V,
\mathcal O_V)$ is necessary.  However, we have:

\begin{exa}[\emph{Geometric} reducedness of $\Gamma(V,\mathcal O_V)$ is \emph{not} necessary] \label{E:GRX}
We use a well-known example due to Maclane \cite[p.384]{maclane}, which seems to be used frequently as an example of a geometrically nonreduced variety with interesting properties.  Let  $K=\mathbb F_p(t_1,t_2)$, let $S=K[x_1,x_2]/(t_1x_1^p+t_2x_2^p-1)$, and   define $V:=\operatorname{Spec}S$.  Then $V$ is integral and geometrically connected, but not geometrically reduced.  Indeed, setting $L= \mathbb F_p(t_1^{1/p},t_2^{1/p})$, we have $V_L=\operatorname{Spec}L[x_1,x_2]/((t_1^{1/p}x_1+t_2^{1/p}x_2-1)^p)$. 
One can check that $K$ is algebraically closed in $S$ (see,
\emph{e.g.},  \cite[Exa.~6.15, p.20]{vojta}), so that $S$, being
reduced, admits no non-trivial finite dimensional $K$-subalgebras
$R\subseteq S$.  Differently put, one cannot use \Cref{C:integral} to try to show that $V$ does not admit an Albanese datum. In fact,   we claim that the structure morphism $V\to \operatorname{Spec}K$ is an Albanese datum.  In other words, there are no nontrivial morphisms $V\to P$ to a torsor under an abelian variety over $K$. 
This follows from the fact that the reduction of $V_L$ is a rational curve.  More precisely,   given a morphism $V\to P$ to a torsor under an abelian variety over $K$, if the image were zero dimensional, then since
 $S$ admits no non-trivial finite dimensional $K$-subalgebras $R\subseteq S$, the image of $V$ in $P$ would have to be isomorphic to  $\operatorname{Spec} K$.  If  the image of $V$  were $1$-dimensional, then after base change to an algebraic closure $\bar K$, and considering the reduction of $V_{\bar K}$, one would have a non-trivial map from a rational curve to an abelian variety, which is not possible.  Since $\dim V=1$, we are done. 
\end{exa}

We summarize the situation in the following corollary, including the relation to Schr\"oer \cite[Thm.~p.2]{schroeralbanese}, which has the additional separated hypothesis:
  \begin{pro}\label{C:Alb-nEx}
Let $V$ be a scheme of finite type over a field $K$.  
\begin{enumerate}[label={(\roman*)}]

\item 
If $V$ admits an Albanese datum, then \label{C:Alb-1}
\begin{enumerate}[label={(\alph*)}]
\item $V$ is geometrically connected, \label{C:Alb-a}
\item $\operatorname{Spec}\Gamma(V,\mathcal O_V)$ is geometrically connected,\label{C:Alb-b}
\item $\operatorname{Spec}\Gamma(V,\mathcal O_V)$ is reduced, \label{C:Alb-c}
\item $K$ is algebraically closed in $\Gamma(V,\mathcal O_V)$, \label{C:Alb-d} 
\end{enumerate}
\item (Schr\"oer) If, moreover, $V$ is separated, then the converse holds. More precisely, for a separated scheme $V$ of finite type over a field $K$, one has that  $V$ admits an Albanese  datum if and only if $\operatorname{Spec}\Gamma(V,\mathcal O_V)$ is connected and reduced, and $K$ is algebraically closed in $\Gamma(V,\mathcal O_V)$.  \label{C:Alb-2}
\end{enumerate}
\end{pro}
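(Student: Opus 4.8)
The plan is to read part~(i) as an assembly of the necessity results proved earlier in this section, together with one short argument for the last clause, and then to deduce part~(ii) by combining part~(i) with Schr\"oer's existence theorem.

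For part~(i) I would establish each of the four conditions by contraposition. Geometric connectedness of $V$ (condition \ref{C:Alb-a}) is exactly \Cref{C:GCX}, which shows that a scheme failing to be geometrically connected admits no Albanese datum. Geometric connectedness of $\operatorname{Spec}\Gamma(V,\mathcal O_V)$ (condition \ref{C:Alb-b}) is then equivalent to \ref{C:Alb-a} by \Cref{R:geomconn}, and so follows as well. Reducedness of $\operatorname{Spec}\Gamma(V,\mathcal O_V)$ (condition \ref{C:Alb-c}) is precisely \Cref{C:RX}. The only clause needing a new observation is \ref{C:Alb-d}: if $K$ were not algebraically closed in $\Gamma(V,\mathcal O_V)$, I would pick $\alpha\in\Gamma(V,\mathcal O_V)$ algebraic over $K$ with $\alpha\notin K$; then $K[\alpha]\subseteq\Gamma(V,\mathcal O_V)$ is a finite-dimensional $K$-algebra of dimension at least two, hence nontrivial, and \Cref{C:integral} forbids an Albanese datum. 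This settles part~(i).

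For part~(ii) the forward implication is immediate from part~(i): an Albanese datum forces $\operatorname{Spec}\Gamma(V,\mathcal O_V)$ to be geometrically connected (in particular connected) and reduced, and $K$ to be algebraically closed in $\Gamma(V,\mathcal O_V)$. The reverse implication, that for \emph{separated} $V$ these three conditions suffice for the existence of an Albanese datum, is Schr\"oer's theorem \cite[Thm.~p.2]{schroeralbanese}, which I would cite directly. I expect the genuinely routine steps to be routine; the entire weight of part~(ii) rests on Schr\"oer's existence result, the one hard input, which is not reproved here. The single point to state carefully is the apparent discrepancy between the \emph{geometric} connectedness recorded in \ref{C:Alb-b} and the bare connectedness appearing in part~(ii): this is harmless, since the forward direction only needs connectedness, while the backward direction is exactly Schr\"oer's hypothesis, so the two halves fit together into a genuine equivalence.
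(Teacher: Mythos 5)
Your proposal is correct and takes essentially the same route as the paper: part (i) is assembled from \Cref{C:GCX}, \Cref{R:geomconn}, and \Cref{C:RX}, with \ref{C:Alb-d} deduced from \Cref{C:integral} (your $K[\alpha]$ argument merely spells out what the paper calls immediate), and part (ii) combines part (i) with Schr\"oer's existence theorem. The only detail the paper includes that you elide by ``citing directly'' is the translation to Schr\"oer's actual hypotheses, which are phrased in terms of the essential field of constants $K'$: for separated $V$ with $\operatorname{Spec}\Gamma(V,\mathcal O_V)$ connected and reduced there is a factorization $V\to\operatorname{Spec}\Gamma(V,\mathcal O_V)\to\operatorname{Spec}K'\to\operatorname{Spec}K$ with $K'$ finite over $K$, so the hypothesis that $K$ is algebraically closed in $\Gamma(V,\mathcal O_V)$ forces $K'=K$, and only then does \cite[Thm.~p.2]{schroeralbanese} apply verbatim.
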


\begin{rem}
Note that if 
  \ref{C:Alb-d} holds in \Cref{C:Alb-nEx}\ref{C:Alb-1}, then $V$ (resp.~$\operatorname{Spec}\Gamma(V,\mathcal O_V)$) connected implies $V$ (resp.~$\operatorname{Spec}\Gamma(V,\mathcal O_V)$) is geometrically connected. 
\end{rem}

\begin{proof}Assuming $V$ admits an Albanese datum,  \ref{C:Alb-a} and
  \ref{C:Alb-b} are \Cref{C:GCX} and  \Cref{R:geomconn}. \ref{C:Alb-c}
  is \Cref{C:RX}.  The assertion \ref{C:Alb-d} follows immediately from
  \Cref{C:integral}, since if $K$ is not algebraically closed in
  $\Gamma(V,\mathcal O_V)$, then $\Gamma(V,\mathcal O_V)$ contains a
  finite nontrivial extension field of $K$.

Conversely, assume that  $V$ is a separated scheme $V$ of finite type over a field $K$,    $\operatorname{Spec}\Gamma(V,\mathcal O_V)$ is connected and reduced, and $K$ is algebraically closed in $\Gamma(V,\mathcal O_V)$.
The conclusion that $V$ admits an Albanese datum is then is due to Schr\"oer \cite[Thm.~p.2]{schroeralbanese}, after one observes that for a separated scheme $V$ of finite type over a field $K$, with $\operatorname{Spec} \Gamma (V,\mathcal O_V)$ connected and reduced, then $V$ is naturally endowed with a scheme structure over the essential field of constants $K'$ \cite[p.19]{schroeralbanese}, which is by construction finite over $K$; \emph{i.e.}, there is a factorization  $V \to \operatorname{Spec}\Gamma(V,\mathcal O_V)\to  \operatorname{Spec} K'\to  \operatorname{Spec} K$.
Therefore, if $K$ is algebraically closed in $\Gamma(V,\mathcal O_V)$, then the essential field of constants for $V$ is $K$. The assertion is then exactly the statement of \cite[Thm.~p.2]{schroeralbanese}.
\end{proof}

\subsection{Embedding zero-dimensional schemes in abelian varieties}\label{S:embed}

While \Cref{P:embed}  is well suited to proving 
\Cref{C:integral}, the following stronger existence result seems easier to verify:

\begin{pro}
\label{P:rkcondremix}
\begin{enumerate}[label={(\alph*)}]
  Let $L/K$ be a finite simple extension.
       \item If $L/K$ is separable, then there exists a collection of abelian varieties $\st{A_i/K}$ of unbounded dimension such that, for each $i$, \label{P:rkcond1}
\begin{itemize}
\item $A_i$ is absolutely simple;
\item $A_i$ has a closed point with residue field $L$; and
\item $\# A_i(K) \ge 2$.
\end{itemize}

\item If $L/K$ is purely inseparable, then there exist a collection of abelian varieties $\st{A_i/K}$  of unbounded dimension and a collection of abelian varieties $\st{B_i/K}$ such that, for each $i$, \label{P:rkcond2}
\begin{itemize}
\item $A_i$ is absolutely simple;
\item $(B_i)_{\bar K}\cong (A_i\times_KA_i)_{\bar K}$ if $\operatorname{char}(K)\ne 2$, and $(B_i)_{\bar K}\cong  (A_i\times_KA_i\times_KA_i)_{\bar K}$ if $\operatorname{char}(K)=2$;
\item $B_i$ has a closed point with residue field $L$; and
\item $\# B_i(K) \ge 2$.
\end{itemize}

\end{enumerate}
\end{pro}

Before proving \Cref{P:rkcondremix}, we explain how it implies \Cref{P:embed}:

\begin{proof}[Proof of \Cref{P:embed} (using \Cref{P:rkcondremix})]
 First assume that $R$ is local, and set $Z=\operatorname{Spec}R$ for simplicity of notation.  
 By assumption $Z\subseteq \operatorname{Spec}K[x_1,\dots,x_n]$ for any sufficiently large $n$, and in particular we may assume  $n>3\dim A$.  Take $A'$ to be an abelian variety from \Cref{P:rkcondremix} with $\dim A'\ge n$, and replacing $n$ with $\dim A'$, we can and will assume that $n=\dim A'$.
  Since $(A')_{\bar K}$ is a product of at most $3$ simple abelian varieties each of which, from our assumptions on~$n$, must  have dimension greater than $\dim A$, we have that $\operatorname{Hom}(A_{\bar K},A'_{\bar K})=0$.  Therefore, we only need to show that $Z\subseteq A'$.  
 
  For this,  using the definition of smoothness, we have a commutative fibered product diagram Zariski locally on $A'$:
 $$
 \xymatrix{
 Z' \ar@{^(->}[r] \ar[d]_{\text{\'et}}& (A',a') \ar[d]_{\text{\'et}}\\
 (Z,z) \ar@{^(->}[r]& (\mathbb A^n_K,a)
 }
 $$
 where we have marked each scheme with its respective $L$-point, having residue field $L$.  
 The $L$-points, and the commutativity of the diagram give an $L$-point we will call $z'$ on $Z'$.  Let $Z''$ be the component of $Z'$ containing $z'$, and consider the pointed scheme $(Z'',z')$.  Note that the residue field of $z'$ must also be equal to $L$.    
Since all the morphisms above induce isomorphisms on the complete local rings (they are \'etale and induce isomorphisms on residue fields \cite[Prop.~17.6.3]{EGAIV4}), and since  $(Z'',z')$ and $(Z,z)$ are affine pointed schemes associated to finite $K$-algebras (which are therefore products of complete local $K$-algebras),  we have that $Z''$ and $Z$  are isomorphic.  This completes the proof in the case where $R$ is local.

In general, $R$, being Artinian, is a product of finitely many local
rings.  Now use the nontrivial $K$-points and a product construction.
(In more detail, if $R \iso \prod_{j=1}^r R_j$ is a product of local
Artin algebras, using the previous paragraph, let $A_j/K$ be an absolutely simple abelian variety
equipped with an embedding $\alpha_j: \spec R_j \to A_j$ whose image
is \emph{not} supported at the identity element.  Let $A' =
\prod A_j$, and let $\iota_j: A_j \to A'$ be the natural embedding.
Define a morphism $\alpha: \spec R \to A$ by $\alpha|_{\spec(R_j)} =
\iota_j \circ \alpha_j$; then $\alpha$ is a closed embedding.)
\end{proof}

\begin{proof}[Proof of \Cref{P:rkcondremix} when $K$ is infinite and $L/K$ is a finite  \emph{separable} extension]

We suppose  
$\operatorname{char}(K)\not  = 2$, and indicate the
necessary changes for even characteristic at the end.  Using the
separability hypothesis, choose a
polynomial $f(s) \in K[s]$ such that $L \iso K[s]/f(s)$; note that $f$
is squarefree.

For each $i$, let $h_i(x) \in K[x]$ be a polynomial of degree $i$
which factors completely over $K$, and such that $x h_i(x)$ is
squarefree.
 Let $t$ be a parameter on $\mathbb A^1_K$, and let
$\mathcal C_i \to \mathbb A^1_K$ be the family of curves whose fiber over
$t$ has affine model $y^2 = f(x)x h_i(x)(x-t)$.  Fix $\ell > 3$
invertible in $K$.  The geometric $\bmod \ell$ monodromy of this family
is $\operatorname{Sp}_{2g}(\integ/\ell)$ \cite{hallbigmonodromy}.

Let $K_0 \subseteq K$ be a subfield, finitely generated over the prime
field, such that $\mathcal C_i \to \mathbb A^1_K$ descends to a model over
$K_0$. Since $K_0$ is finitely generated over $\rat$ (if
$\operatorname{char}(K)=0$) or $\ff_p(s)$ (if
$\operatorname{char}(K)=p>0$), $K_0$ is Hilbertian.  By Hilbert's
irreducibility theorem, there exists some $t_0 \in \mathbb A^1(K_0) \subset
\mathbb A^1(K)$ such that, for $C_i := \mathcal C_{i,t_0}$, the image of $\gal(K_0)$ acting on $H^1(C_{i, \bar K_0},\integ/\ell)$ contains $\operatorname{Sp}_{2g}(\integ/\ell)$.  In particular, let $A_i = \operatorname{Jac}(C_{i,t_0})/K$; a standard argument then shows $\End(A_{i,\bar K}) \iso \integ$.  (Briefly, for 
  group-theoretic reasons, since $\ell>3$ and since the image of
  $\gal(K_0)$ in $\operatorname{Aut}(H^1(C_{i,\bar K_0},\integ_\ell))$
  is an $\ell$-adically closed group which contains a subgroup surjecting onto
  $\operatorname{Sp}_{2g}(\integ/\ell)$, it contains all of
  $\operatorname{Sp}_{2g}(\integ_\ell)$.  Replacing $K_0$ with a
  finite extension $K_0'$ replaces the image of $\gal(K_0)$ with a
  subgroup of finite index, but it is still Zariski dense in
  $\operatorname{Sp_{2g,\rat_\ell}}$.  Then
  $\End_{K'_0}(A_i)\otimes\rat_\ell$, being contained in the commutant
  of $\operatorname{Sp}_{2g,\rat_\ell}$ in $\operatorname{Aut}(H^1(C_{i,\bar K_0},\rat_\ell))$, is just $\rat_\ell$, and thus
  $\End_{K'_0}(A_i) \iso \integ$.) 
 We use the base point $(0,0)$ to embed $C_i$ in~$A_i$.

Let $Z_i \subseteq C_i$ be the vanishing locus of the function $y$.  Then $Z_i$ is the spectrum of 
\[
R_i :=\frac{K[x,y]}{(y^2-xh_i(x)(x-t_0), y)} \iso \frac{K[x]}{f(x)} \oplus K^{\oplus i+2} \iso L\oplus K^{\oplus i+2},
\]
and we have $Z_i \hookrightarrow C_i \hookrightarrow A_i$.  In
particular, $A_i$ contains a subscheme isomorphic to $\spec L$, and
$\#A_i(K) \ge i+2 \ge 2$.

In fact, the same argument works if we replace $y^2 = xh_i(x)(x-t)$
with $y^r = x h_i(x)(x-t)$ for any prime $r$ \cite[\S 2]{katzbigmonodromy}.  Briefly, every $K$-rational fiber contains $\spec L\oplus \spec K$ as a closed subscheme;  the monodromy group of the family
contains  a 
special unitary group; Hilbert irreducibility and an $\ell$-adic calculation show that the absolute endomorphism ring of the Jacobian of any
fiber outside a thin set is $\integ[\zeta_r]$; and such a Jacobian is
an absolutely simple abelian variety.
     \end{proof}

We now move to the case of a finite simple purely inseparable extension $L/K$, which we can take to be of the form $L=K[x]/(x^{p^r}-a)$ for some choice of $a\in K$.  A natural approach (for $\operatorname{char}K\ne 2$) would then be to consider the family of hyperelliptic curves 
$ -y + y^2 + xy + (x^{p^r}-a)(x^p-a_1)(x^p-a_2)\cdots (x^p-a_s)(x^p-t)$, or the family  $-ty + y^2 + xy + (x^{p^r}-a)(x^p-a_1)(x^p-a_2)\cdots (x^p-a_s)$, both of which give smooth affine curves containing $\operatorname{Spec}L$ as a closed subscheme. We note that completing the square of the second family gives the family $y^2-\frac{1}{4}(x-t)^2+(x^{p^r}-a)(x^p-a_1)(x^p-a_2)\cdots(x^p-a_s)$.  The complication in this approach is to determine if the general member of the family has large $\bmod \ell$ monodromy; i.e., whether the general curve in the family is absolutely simple.  To avoid this issue, we use an argument we learned from David Grant\,:

\begin{proof}[Proof of \Cref{P:rkcondremix} when $K$ is infinite and $L/K$ is a finite simple \emph{purely inseparable} extension]  As in the previous proof, we reduce to the case where $K$ is finitely generated over the prime field, and again, we explain the case where $p := \operatorname{char}(K)\ne 2$ first.
To begin, we fix for each $i$ the curve $C_i$ over $K$ from the previous proof, which is a smooth projective hyperelliptic curve of appropriately large genus admitting a number of $K$-points, and which has absolutely simple Jacobian $\operatorname{Jac}(C_i)$, which we denote by $A_i/K$.  
 Since $L/K$ is assumed to be simple and purely inseparable, we can take $L$ to be  of the form $L=K[x]/(x^{p^r}-a)$ for some choice of $a\in K$.  In particular, we have $\operatorname{Spec}L\subseteq \mathbb P^1_K$, and by changing $C_i$ (moving the branch points) we may and will assume that $\operatorname{Spec}L$ has support disjoint from the branch locus of the structure map $C_i\to \mathbb P^1_K$.  The pre-image of $\operatorname{Spec}L$ in $C_i$ is a closed subscheme $\operatorname{Spec}N\subseteq C_i$, either consisting of two distinct $L$-points, or   consisting of a single point, in which case  $N/L$ is a degree $2$ extension of fields, separable since  $\operatorname{char}(K)\ne 2$.  
In the former case we can simply take $B_i=A_i\times_KA_i$.  

In the latter case,  
 taking the separable closure of $K$ in $N$, we obtain another subfield $M/K$ of $N$, necessarily of degree $2$ over $K$, giving us a diagram of fields
 \begin{equation*}\label{E:tower0}
 \xymatrix@R=1em{
   N \ar@{-}[d]_{\text{sep}} \ar@{-}[dr]^{\text{insep}} \\
   L\ar@{-}[dr]_{\text{insep}}^<>(0.25){p^r}   & M \ar@{-}[d]^{\text{sep}}_2\\
   &K
  }
\end{equation*}
Using that $[L:K]$ and $[M:K]$ are coprime, we have that $N=LM$, and   $L$ and $M$ are linearly disjoint over $K$ ($N=L\otimes_KM$).
 
Now since $C_i$ is a smooth projective curve with a $K$-point, and therefore embeds in its Jacobian,  we have that $A_i$ admits $\operatorname{Spec}N$ as a closed subscheme (and has a number of $K$-points).   Then $B_i:=\operatorname{Res}_{M/K}((A_i)_M)$ has the property that $(B_i)_{\bar K}\cong (A_i\times_KA_i)_{\bar K}$ (\emph{e.g.}, \cite[Lem.~5]{flynntesta15}), 
  and we claim that $B_i$ admits $\operatorname{Spec}L$ as a closed subscheme (as well as a $K$-point for each  $K$-point of $A_i$). This latter assertion follows from the fact that $\operatorname{Res}_{M/K}\operatorname{Spec}N$ contains $\operatorname{Spec}L$ as a closed subscheme, and the fact that closed immersions are preserved by the Weil restriction \cite[\S 7.6, Prop.~2, p.192]{BLR}.

To see that $\operatorname{Res}_{M/K}\operatorname{Spec}N$ contains $\operatorname{Spec}L$ as a closed subscheme one can argue as follows.
 We have $N=M[x_1,\dots,x_n]/(f_1,\dots,f_m)$.
As for any affine scheme and any finite extension of fields, we can write 
$
{\displaystyle \operatorname {Res} _{M/K}\operatorname{Spec}N}
$
as 
$\operatorname{Spec} K[y_{{i,j}}]/(g_{{l,r}})$, where $y_{i,j}$ ($1\leq i\leq n$, $1\leq j\leq s$) are new variables, and 
$g_{l,r}$ ($1 \leq l \leq m$, $1 \leq r \leq s$) are polynomials in 
$y_{i,j}$ given by taking a basis 
$
e_{1},\dots,e_{s}$ of $M$ over $K$ and setting 
$
x_{i}=y_{{i,1}}e_{1}+\cdots+y_{{i,s}}e_{s}$
 and 
$f_{t}=g_{{t,1}}e_{1}+\cdots + g_{{t,s}}e_{s}$.  In our case, $s=2$. Now since $L$ and $M$ are linearly disjoint and $[N:L]=2$, we have that $e_1,e_2$ form a basis of $N$ over~$L$. So, if we write $\alpha_i$ for the class of $x_i$ in $N$, then we can write $\alpha_i=a_{i,1}e_1+a_{i,2}e_2$ for some elements $a_{i,j}\in L$.  Therefore, by definition, taking $y_{i,j}=a_{i,j}$, we obtain an $L$-point of the Weil restriction. Since not all of the $a_{i,j}$ can be in $K$ (otherwise $L=M$), we have in fact an $L$-point of the Weil restriction with residue field $L$.  This completes the proof in the case where $\operatorname{char}(K)\ne 2$.

In the case where $\operatorname{char}(K)=2$, we replace the family of curves $y^2 = xh_i(x)(x-t)$
with the family $y^3 = x h_i(x)(x-t)$; the rest of the proof goes through identically.
\end{proof}

We now take up the task of dealing with finite fields.

\begin{proof}[Proof of \Cref{P:rkcondremix} when $K$ is finite]
  Let $K = \ff_q$ and $L = \ff_{q^r}$.  There exist absolutely simple
  abelian varieties over $K$ of every dimension, and most of them (in
  particular, at least one in every dimension) have at least two
  $K$-rational points \cite{howezhu02}.  It thus suffices to to assume
  that $r>1$ and show that if $A/K$ is a simple abelian variety then,
  with finitely many exceptions, $A$ has a closed point with residue
  field~$L$.  (In the case $(q,q^r) = (2,4)$, we will prove a slightly
  weaker statement which is still adequate for our purpose.)
  
  Let $X/K$ be any geometrically
irreducible variety.  If $X$ does not contain a closed subscheme
isomorphic to $\spec L$, then every $L$-rational point $P\in
X(L)$ is actually defined over some subfield~$K'$, where $K\subseteq
K' \subsetneq L$.  It suffices to consider points defined over
\emph{maximal} proper subfields of~$L$.  Crudely estimating, we have
the criterion that if
\[
  \# X(\ff_{q^r}) > \sum_{\ell | r} \# X(\ff_{q^{r/\ell}})
\]
(where $\ell$ ranges over prime divisors of $r$), then $X$ has a
closed subscheme isomorphic to $\spec L$.

Let $A/K$ be an abelian variety of dimension $g$.  Weil's theorem on
the eigenvalues of Frobenius of an abelian variety easily yields, for
any extension $\ff_{q^d}$ of $\ff_q$, that
\[
(q^d + 1 - {2\sqrt{q^d}})^g \le \# A(\ff_{q^d}) \le (q^d + 1 +
{2\sqrt{q^d}})^g.
\]
(In fact, each occurrence of ${2\sqrt{q^d}}$ can be replaced by $\floor{2\sqrt{q^d}}$ \cite[\S 1]{aubryhalouilachaud13}.)
In particular, let $\ell_0$ be the smallest prime divisor of $r$.  We have the coarse estimate
\begin{align*}
  \frac{\# \bigcup_{K' \subsetneq L} A(K')}{\#A(L)} &  \le
                                                      \frac{\sum_{\ell|r}
                                                      \#
                                                      A(\ff_{q^{r/\ell}})}{\#A(\ff_{q^r})}\\
   & \le \frac{\left(\# \st{ \ell: \ell|r}\right) (q^{r/\ell_0} + 1 +
     {2 q^{r/2\ell_0}})^g}{(q^r+1-{2\sqrt{q^r}})^g}\\
   & \le
     \log(r) \left(
\frac{q^{r/\ell_0} + 1 +
     {2 q^{r/2\ell_0}}}{q^r+1-{2\sqrt{q^r}}}
     \right)^g.
\end{align*}
For sufficiently large $g$, this quantity is less than one,
\emph{unless} $$(q^{r/\ell_0}, q^r) \in \{(2,4),(3,9),(4,16),(2,8)\}.$$

Consider one of these remaining cases.  Then $\ff_{q^{r/\ell_0}}$ is
the unique maximal proper subfield of~$\ff_{q^r}$.  If the abelian
variety $A/\ff_q$ fails to have a closed subscheme isomorphic to
$\spec \ff_{q^r}$, then $A(\ff_{q^{r/\ell_0}}) = A(\ff_{q^{r}})$.
Except for the case $(q^{r_{\ell_0}},q^r) = (2,4)$, this cannot happen
if $A$ is simple of dimension at least three.  Indeed, the case
$(q^{r_{\ell_0}},q^r) = (2,8)$ literally follows from
\cite[Lem.~3.1]{kedlayarelclassoneI}, while the other two cases follow
from its proof and \cite[Lem.~2.1(b)]{kedlayarelclassoneI}.  

We now
address the remaining case $K = \ff_2$ and $L= \ff_4$ by adapting
Kedlaya's argument to our needs.  Assume that $A/K$ is an absolutely simple ordinary
abelian variety of dimension $g$ with $\#A(K) \ge 2$.  (Again, this is
possible by \cite{howezhu02}.)  Let $A'$ be its nontrivial quadratic
twist; it, too, is absolutely simple.  We will show that if $A(K) =
A(L)$, then $A'(K) \subsetneq A'(L)$.  Let $B = \res_{L/K}(A_L)$; then
$B$ is isogenous to $A\times_K A'$.  Since $B(K) = A(L)$, if $A(K) =
A(L)$, then $\#A'(K)  = 1$.  For $g$ in the complement of a thin set
of natural numbers -- in particular, for infinitely many $g$ -- this uniquely determines the isogeny class of $A'$
\cite[Lem.~2.1(c)]{kedlayarelclassoneI}.   If $A'$ \emph{also} has the
property that $A'(K) = A'(L)$, then the quadratic twist $A''$ of $A'$
also satisfies $A''(K) = 1$.  Since $A'' \iso A$, we find in
particular that $A$ and
$A'$ are isogenous; but this is impossible for a simple ordinary
abelian variety (e.g., \cite[Ex.~1.7]{achtercunningham15}). Consequently, at
least one of $A$ and $A'$
admits a subscheme isomorphic to $\spec L$. 
\end{proof}

As is clear from the proofs above, given a simple extension of fields $L/K$ one can quickly write down an abelian variety with $\operatorname{Spec}L$ as a closed subscheme.  The difficulty is finding such an abelian variety that is absolutely simple (or, whose base change to the algebraic closure is a product of simple abelian varieties of sufficiently large dimension).  In order to shorten the proofs above, one might hope that given any abelian variety $A/K$, and any simple extension of fields~$L/K$, 
there exists a closed subscheme of $A$ isomorphic to $\operatorname{Spec}L$.  The following example shows that this is not the case\,:

\begin{exa}
Let $E$ be the elliptic curve over $\mathbb F_2$ with affine model $y^2+y = x^3+x^2$.  One can check that $\#E(\mathbb F_2) = \#E(\mathbb F_4)=5$. In particular, there is no point of $E$ with residue field $\mathbb F_4$.
\end{exa}

 \section{Base change for Albanese varieties}\label{S:BC}

  Let $V$ be a geometrically connected  and geometrically reduced scheme
  of finite type over a field $K$, and let $(\alb_{V/K}, \
 \alb^1_{V/K},a_{V/K} )$ be Albanese data for $V$ \eqref{E:AlbDat}.  Recall that this includes the Albanese morphism 
  $$
\xymatrix{
  V \ar[r]^<>(0.5){a_{V/K}}&  \operatorname{Alb}^1_{V/K}
  }
  $$
  to the Albanese torsor.  
 If $L/K$ is any field extension, then after base change
 we obtain a diagram 
   \begin{equation}\label{E:beta1}
\xymatrix@C=4em{
  V_L \ar[r]^<>(0.5){a_{V_L/L}} \ar@{=}[d]&  \operatorname{Alb}^1_{V_L/L} \ar@{-->}[d]^{\beta^1_{V,L/K}}\\
  V_L \ar[r]^<>(0.5){(a_{V/K})_L}&  (\operatorname{Alb}^1_{V/K})_L\\
  }
  \end{equation}
 where $\beta^1_{V,L/K}$ is induced by the 
   the universal property of the Albanese.
       Via the dual of the pull-back morphism on line bundles (see \Cref{R:Pic0T}), this is  equivariant with respect to a base change morphism of abelian varieties
  \begin{equation}\label{E:beta}
 \xymatrix{\beta_{V,L/K}: \alb_{V_L/L} \ar[r] &(\alb_{V/K})_L}.
 \end{equation}
A diagram similar to \eqref{E:beta1} holds in the pointed case, as well.

We say that the Albanese data  of $V$ is \emph{stable under (separable) base change of
  field}  if the Albanese data exists and $\beta_{V,L/K}$ and $\beta^1_{V,L/K}$  are isomorphisms  for all
 (separable) field extensions $L/K$.  Note that in particular, this means that
 $((\alb_{V/K})_L, \
 (\alb^1_{V/K})_L,(a_{V/K})_L )$ gives Albanese data for $V_L$.

 There is an analogous notion for pointed Albanese data
 \eqref{E:PAlbDat} to be \emph{stable under (separable) base change of
   field}.  A before, when this holds, the suitable base
 change of pointed Albanese data is again pointed Albanese data.

Initially, we remark that in the unpointed case,   $\beta_{V,L/K}$ is an
 isomorphism if and only if  $\beta^1_{V,L/K}$ is an isomorphism:

 \begin{lem}
  \label{L:betavsbeta1}
  Let $V/K$ be a geometrically connected and geometrically reduced  
  scheme of finite type over a field $K$, and let $L/K$ be an extension of fields.
  Then $\beta_{V,L/K}$ is an isomorphism if and only if $\beta^1_{V,L/K}$ is an
  isomorphism.
\end{lem}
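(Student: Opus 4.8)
The plan is to pass to the algebraic closure $\bar L$, where both Albanese torsors acquire rational points and hence trivialize, reducing the comparison of $\beta^1_{V,L/K}$ and $\beta_{V,L/K}$ to the elementary observation that a translate of a homomorphism of abelian varieties is an isomorphism exactly when the homomorphism is. Throughout, write $A = \alb_{V_L/L}$, $A' = (\alb_{V/K})_L$, $P = \alb^1_{V_L/L}$ and $P' = (\alb^1_{V/K})_L$, so that $\beta := \beta_{V,L/K} \colon A \to A'$ and $\beta^1 := \beta^1_{V,L/K} \colon P \to P'$, and recall from the construction of $\beta$ in \eqref{E:beta} via \Cref{R:Pic0T} that $\beta^1$ is a $\beta$-equivariant morphism of torsors.

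First I would reduce to $\bar L$. Since $\spec \bar L \to \spec L$ is faithfully flat and the property of a morphism being an isomorphism is fppf-local on the base, $\beta$ is an isomorphism if and only if $\beta_{\bar L}$ is, and likewise $\beta^1$ is an isomorphism if and only if $\beta^1_{\bar L}$ is. It therefore suffices to prove that $\beta^1_{\bar L}$ is an isomorphism if and only if $\beta_{\bar L}$ is.

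Over $\bar L$ the torsors $P_{\bar L}$ and $P'_{\bar L}$ are nonempty schemes of finite type over an algebraically closed field, so each has a rational point; fixing $p_0 \in P(\bar L)$ and $p_0' \in P'(\bar L)$ and using the torsor actions gives isomorphisms $A_{\bar L} \xrightarrow{\sim} P_{\bar L}$, $a \mapsto a \cdot p_0$, and $A'_{\bar L} \xrightarrow{\sim} P'_{\bar L}$, $a' \mapsto a' \cdot p_0'$. Writing $\beta^1(p_0) = a_0' \cdot p_0'$ and using $\beta$-equivariance, the composite $A_{\bar L} \to P_{\bar L} \xrightarrow{\beta^1_{\bar L}} P'_{\bar L} \to A'_{\bar L}$ sends $a \mapsto \beta_{\bar L}(a) + a_0'$; that is, under the chosen trivializations $\beta^1_{\bar L}$ is $\beta_{\bar L}$ followed by translation by $a_0'$. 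As translation on $A'_{\bar L}$ is an isomorphism, $\beta^1_{\bar L}$ is an isomorphism if and only if $\beta_{\bar L}$ is, which is exactly what remained to be shown.

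I expect the only delicate point to be the descent step: one must be careful that isomorphy descends along $\bar L / L$, which it does because $\spec \bar L \to \spec L$ is faithfully flat (indeed $\bar L$ is a nonzero free, hence faithfully flat, $L$-module) and isomorphy is fppf-local on the base. The remaining ingredients — triviality of torsors under abelian varieties over an algebraically closed field, and the identification of an equivariant torsor morphism with a translate of the underlying homomorphism — are routine, and the equivariance of $\beta^1$ over $\beta$ is already encoded in \eqref{E:beta}.
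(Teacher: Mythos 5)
Your proof is correct, and it overlaps with the paper's argument in one direction while diverging in the other. For the implication ``$\beta_{V,L/K}$ an isomorphism $\Rightarrow$ $\beta^1_{V,L/K}$ an isomorphism,'' the paper does exactly what you do: pass to the algebraic closure of $L$, observe that $\beta^1_{V,L/K}$ and $\beta_{V,L/K}$ agree up to translation there, and descend. For the reverse implication, however, the paper argues purely functorially over $L$ itself, using \Cref{R:Pic0T}: the abelian variety acting on a torsor $P$ is canonically $(\operatorname{Pic}^0_{P/K})^\vee$, so an isomorphism of torsors immediately induces an isomorphism of the underlying abelian varieties, with no base change or descent at all. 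Your version instead runs the same trivialization-over-$\bar L$ computation in both directions, which buys uniformity and avoids any appeal to duality of Picard schemes, at the modest cost of invoking flat descent twice. One terminological correction: you call isomorphy ``fppf-local on the base,'' but $\operatorname{Spec}\bar L \to \operatorname{Spec} L$ is generally not of finite presentation; the relevant descent is fpqc (faithfully flat and quasi-compact), which is what your parenthetical about faithful flatness actually supplies, and is the form of descent the paper itself uses elsewhere (\emph{e.g.}, \Cref{L:fpqc}, citing \cite[p.583]{GW20}). With that wording fixed, every step of your argument is sound: the equivariance of $\beta^1_{V,L/K}$ over $\beta_{V,L/K}$ is indeed built into \eqref{E:beta}, torsors under abelian varieties trivialize over an algebraically closed field, and an equivariant torsor morphism becomes a translate of the underlying homomorphism under any choice of trivializations.
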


 \begin{proof}
  On one hand, let $T$ be a torsor under an abelian variety $A$ over $K$; then $A\cong (\operatorname{Pic}^0_{T/K})^\vee$ (\Cref{R:Pic0T}).
     Consequently, if $\alb^1_{V_L/L}$ and
  $(\alb^1_{V/K})_L$ are isomorphic via $\beta^1_{V,L/K}$, then so are $\alb_{V_L/L}$ and
  $(\alb_{V/K})_L$ via $\beta_{V,L/K}$.

  On the other hand, suppose $\beta_{V,L/K}$ is an isomorphism.  Then
  $\beta^1_{V,L/K}$ is a nontrivial map of torsors over an isomorphism of abelian
  varieties.  Since $\beta_{V,L/K}$ and $\beta^1_{V,L/K}$ agree up to translation after base change to the algebraic closure of $L$, $\beta^1_{V,L/K}$ is an isomorphism. 
 \end{proof}

 The Raynaud example (Example~\ref{E:badbc}) shows that, in general, Albanese varieties are not stable under base change of
 field.
 There are two possible issues to focus on in this example.  First, the
 variety $G/K$ is not proper, and second, the extension $L/K$ is not
 separable.  Regarding the former, it has been understood that if one
 assumes $V$ is proper, then the Albanese variety is stable under base change:

 \begin{teo}[Grothendieck--Conrad]\label{T:GrConBC-1}
  Let $V$ be a  \emph{proper} geometrically connected   and geometrically reduced 
  scheme  over a field $K$.  Then  Albanese data for $V$ \eqref{E:AlbDat} is stable under base change
  of field, and if $V$ admits a $K$-point $v$, then  pointed Albanese
 data for $(V,v)$ \eqref{E:PAlbDat} is stable
  under base change of field. \qed
 \end{teo}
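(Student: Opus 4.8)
The plan is to prove the statement via the duality between the Albanese variety and the Picard scheme, reducing everything to a base-change assertion for the latter. First I would reduce to the single claim that the comparison homomorphism $\beta_{V,L/K}\colon \operatorname{Alb}_{V_L/L}\to(\operatorname{Alb}_{V/K})_L$ of \eqref{E:beta} is an isomorphism for every field extension $L/K$: by \Cref{L:betavsbeta1} this is equivalent to $\beta^1_{V,L/K}$ being an isomorphism in the torsor case, and the pointed case follows in the same way once a base point is fixed (the two formulations differ only by a translation, which is harmless). Since an isogeny of abelian varieties is an isomorphism as soon as its dual is, by \Cref{R:Pic0T} it suffices to prove that the dual map $\bigl((\operatorname{Alb}_{V/K})^\vee\bigr)_L\to(\operatorname{Alb}_{V_L/L})^\vee$ is an isomorphism.

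Next I would identify these duals inside the Picard scheme. For $V$ proper the functor $\operatorname{Pic}_{V/K}$ is representable by a group scheme locally of finite type over $K$, and pullback along the Albanese morphism $a_{V/K}$, together with the identification of an abelian variety's dual with $\operatorname{Pic}^0$ (\Cref{R:Pic0T}), identifies $(\operatorname{Alb}_{V/K})^\vee$ with the maximal abelian subvariety $B_K$ of $(\operatorname{Pic}^0_{V/K})_{\mathrm{red}}$: the universal property of the Albanese says precisely that every homomorphism $A^\vee\to\operatorname{Pic}^0_{V/K}$ arising from a pointed map $V\to A$ factors through $a_{V/K}^*$, so the image of $a_{V/K}^*$ is the largest abelian subvariety. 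The formation of $\operatorname{Pic}_{V/K}$, and of its identity component $\operatorname{Pic}^0_{V/K}$, commutes with field extension, so $\operatorname{Pic}^0_{V_L/L}\cong(\operatorname{Pic}^0_{V/K})_L$. Thus the theorem is reduced to showing that the maximal abelian subvariety of $\operatorname{Pic}^0_{V/K}$ is stable under base change of field, that is, that the always-valid inclusion $(B_K)_L\hookrightarrow B_L$ (valid because $(B_K)_L$ is an abelian subvariety of $(\operatorname{Pic}^0_{V/K})_L$) is an equality.

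For separable $L/K$ this is routine: passing to the separable closure and invoking Galois descent, reducedness and the Chevalley decomposition of $\operatorname{Pic}^0$ are preserved, no new abelian subvariety can appear, and $\dim B_L=\dim B_K$. The main obstacle, and the only substantive point, is the purely inseparable case. Here $(\operatorname{Pic}^0_{V/K})_{\mathrm{red}}$ need not remain reduced after base change, and a maximal abelian subvariety can genuinely grow under an inseparable extension — this is exactly the mechanism behind Raynaud's \Cref{E:badbc}, where improperness is present. The hard part is therefore to show that \emph{properness} forbids this jump. This is where I expect the real work to lie, and it is the content of the cited Grothendieck--Conrad theory: because $V$ is proper, geometrically connected and geometrically reduced, one has $H^0(V_{\bar K},\mathcal O_{V_{\bar K}})=\bar K$, hence cohomological flatness in degree zero, and the fine structure of the Picard scheme (controlled by coherent cohomology, which commutes with the flat morphism $\operatorname{Spec}L\to\operatorname{Spec}K$) forces the abelian part of $\operatorname{Pic}^0_{V/K}$ to be already defined over $K$ and to remain maximal after base change. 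Granting this, $(B_K)_L\hookrightarrow B_L$ is an isomorphism of abelian varieties, so $\beta_{V,L/K}$ is an isomorphism, and the torsor and pointed statements follow from the reductions of the first paragraph.
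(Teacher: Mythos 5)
Your proposal is correct in outline and takes essentially the same route as the paper: the paper does not reprove \Cref{T:GrConBC-1} but cites Grothendieck \cite[Thm.~VI.3.3(iii)]{FGA} and Conrad \cite[Thm.]{ConradMathOver}, whose argument---summarized in \Cref{R:GrCon}---is precisely the identification of $(\operatorname{Alb}_{V/K})^\vee$ with the maximal abelian subvariety of $\operatorname{Pic}_{V/K}$ and the base-change stability of that subvariety which you sketch. The only caveat is that, exactly as the paper does, you defer the one substantive step (that properness prevents the maximal abelian subvariety from growing under purely inseparable extensions, ``granting this\dots'') to the cited Grothendieck--Conrad results, so your write-up is a correct expansion of the same citation rather than an independent proof.
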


 \begin{rem}[References for Theorem \ref{T:GrConBC-1}]\label{R:GrCon}
  Recall that Grothendieck provides an Albanese torsor (resp.~pointed Albanese variety) for any proper geometrically connected and 
  geometrically normal  scheme $V$ (resp.~pointed  proper geometrically connected and 
  geometrically normal  scheme $(V,v)$) over a field $K$ in the following way.
  As $V/K$ is proper and geometrically normal, one has that
  $\operatorname{Pic}^0_{V/K}$ is proper \cite[Thm.~VI.2.1(ii)]{FGA}; then by
  \cite[Prop.~VI.2.1]{FGA}, one has that $(\operatorname{Pic}^0_{V/K})_{\operatorname{red}}$ is a
  group scheme (\emph{i.e.}, without the usual hypothesis that $K$ be perfect and the group scheme be smooth).
  It then follows from \cite[Thm.~VI.3.3(iii)]{FGA} that
  $((\operatorname{Pic}^0_{V/K})_{\operatorname{red}})^\vee$
   is an Albanese variety, and using that $V/K$ is geometrically connected,  that there exists an Albanese torsor.
  Conrad has generalized Grothendieck's argument to show that any proper geometrically connected and  
  geometrically reduced  
   scheme $V$ over a field $K$ admits an
  Albanese torsor,  and  a pointed Albanese variety if $V$ admits a $K$-point. For lack of a better reference, we direct the reader to
  \cite[Thm.]{ConradMathOver}.
  His argument is to show that the Albanese variety is the dual abelian variety to the
  maximal abelian subvariety of the (possibly non-reduced and non-proper) Picard
  scheme  $\operatorname{Pic}_{V/K}$.  Grothendieck's theorem can then be
  summarized in this context by saying that his additional hypothesis that $V$ be
  geometrically normal implies that the maximal abelian subvariety of
  $\operatorname{Pic}_{V/K}$ is $\operatorname{Pic}^0_{V/K}$.
That Grothendieck's and Conrad's Albanese varieties are stable under arbitrary field
  extension 
  is  \cite[Thm.~VI.3.3(iii)]{FGA} and \cite[Prop.]{ConradMathOver},
  respectively.  In fact, the Albanese variety enjoys an even
                stronger universal property; see \S \ref{S:univ} below.
 \end{rem}

  While the hypothesis in the theorem that $V$ be geometrically connected is necessary (\Cref{C:GCX}), we point out here that it is possible for geometrically non-reduced schemes to admit Albanese data that is stable under base change of field:

  \begin{exa}[Albanese base change for a non-reduced scheme]
\label{E:NonRedAlbBC}  Let $V$ be the non-reduced scheme defined in Example \ref{E:NonRedAlb}.  Then the Albanese torsor (and the pointed Albanese variety) of $V$ is stable under base change of field.
\end{exa}

 The second potential difficulty in Example \ref{E:badbc}, namely, the
 inseparability of the field extension $L/K$, shows that in the
 absence of properness, something like the 
 separability hypothesis in Theorem \ref{T:main} is necessary.

 \section{Extensions of fields}

 We briefly detour  from our development of the Albanese to gather
 some results on separable and primary extensions of fields.

\subsection{Separable extensions}\label{S:SepExt}

The following elementary results on separable extensions will
ultimately be used to extend the standard \Cref{L:albBCautLK} below 
 to arbitrary separable extensions (as opposed to separable algebraic extensions).  
 
For clarity with the terminology,
 we recall that a (not necessarily algebraic) field extension $L/K$ is separable if for every extension of fields  $M/K$, one has that $M\otimes _KL$ is reduced.  
 Setting $p$ to be the characteristic exponent,  this is equivalent 
    to the condition that $L^p$ and $K$ be linearly disjoint over $K^p$; \emph{i.e.}, that the natural map $L^p\otimes_{K^p}K\to L^pK$ be injective \cite[Rem., p.V.119]{bourbakifields}.
 We say that a field $K$ is separably closed if it admits no separable \emph{algebraic} field extensions.  We say an extension  of fields $L/K$ is  purely inseparable
  if for every $x\in L$, there is an integer $n$ such that $x^{p^n} \in K$, or equivalently~\cite[Prop.~13, p.V.42]{bourbakifields}, if it is an algebraic extension and there are no  nontrivial separable subextensions.  
 
 Note that if $\operatorname{char}(K)=0$ then any field extension of $K$ is separable \cite[Thm.~1, p.V.117]{bourbakifields}.   In general, any field extension $L/K$ 
  factors as $L/L'/K$ with $L'/K$ separable and $L/L'$ purely inseparable; indeed, taking any transcendence basis $T$ for $L/K$ \cite[Thm.~1, p.V.105]{bourbakifields}, one has $K(T)/K$ is separable \cite[Prop.~6, p.V.116]{bourbakifields}, and then the algebraic extension $L/K(T)$ factors as a separable extension $L'/K(T)$ followed by a purely inseparable extension $L/L'$.  
  Here we are using that the composite of two separable extensions is separable \cite[Prop.~9, p.V.117]{bourbakifields}.
  
 \begin{lem}
  \label{L:autLKsepclosed}
  Let $\Omega/k$ be an extension of separably closed fields. Then
  $\Omega/k$ is separable if and only if $\Omega^{\aut(\Omega/k)} = k$.
 \end{lem}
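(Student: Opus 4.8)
The plan is to prove the two implications separately: the direction ``$\Omega^{G}=k \Rightarrow \Omega/k$ separable'' is a clean contrapositive, while ``$\Omega/k$ separable $\Rightarrow \Omega^{G}=k$'' is the substantive one. Throughout let $p$ be the characteristic exponent and $G=\aut(\Omega/k)$. The one general fact I would record at the outset is that every $\sigma\in G$ extends to an automorphism $\til\sigma$ of the algebraic closure $\overline\Omega$, and that $\til\sigma$ fixes every element $\beta$ which is purely inseparable over $k$: indeed $\beta^{p^n}\in k$ is fixed, and $\beta$ is its unique $p^n$-th root in $\overline\Omega$.

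For ``$\Omega^{G}=k\Rightarrow\Omega/k$ separable'' I would argue by contraposition, and this step does not even use that the fields are separably closed. If $\Omega/k$ is not separable then, by the criterion recalled in \S\ref{S:SepExt} (equivalently, $\Omega$ and $k^{1/p}$ are not linearly disjoint over $k$), there is a finite subset $\st{\beta_1,\dots,\beta_m}\subseteq k^{1/p}$ that is linearly independent over $k$ but linearly dependent over $\Omega$. Choosing such a set of minimal size, the space of $\Omega$-linear relations among the $\beta_i$ is one-dimensional, so after normalizing there is a \emph{unique} relation $\beta_1+\sum_{i\ge 2}c_i\beta_i=0$ with $c_i\in\Omega$. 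Each $\beta_i$ satisfies $\beta_i^{\,p}\in k$ and is therefore fixed by $\til\sigma$ for every $\sigma\in G$; applying $\til\sigma$ to the relation gives $\beta_1+\sum_{i\ge 2}\sigma(c_i)\beta_i=0$, and uniqueness forces $\sigma(c_i)=c_i$. Thus all $c_i\in\Omega^{G}$, and they cannot all lie in $k$, for then the relation would contradict the $k$-linear independence of the $\beta_i$. Hence $\Omega^{G}\supsetneq k$.

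For the converse, the plan is to show every $x\in\Omega\setminus k$ is moved by some $\sigma\in G$. First $x$ is transcendental over $k$: if $x$ were algebraic, then $k(x)/k$ would be a subextension of the separable extension $\Omega/k$, hence separable algebraic, hence trivial since $k$ is separably closed. Then I would produce $\sigma$ by a back-and-forth construction, building a maximal $k$-isomorphism $\tau\colon F\xrightarrow{\sim}F'$ between subfields of $\Omega$ with $\tau(x)=x+1$ which is \emph{$p$-compatible}, meaning $\tau(F\cap\Omega^{p^n})=F'\cap\Omega^{p^n}$ for every $n$. Over an element transcendental over $F$ one extends using that $\Omega$ has positive transcendence degree over $F'$ (as $F\cong F'$ over $k$); over a separably algebraic element one extends using that $\Omega$ is separably closed, so conjugates are available; and over a purely inseparable element $y$ with $y^{p^n}\in F$, $p$-compatibility forces $\tau(y^{p^n})\in\Omega^{p^n}$, so its unique $p^n$-th root lies in $\Omega$ and serves as $\tau(y)$. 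A maximal such $\tau$ then has domain all of $\Omega$, and its restriction is the desired automorphism moving $x$.

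The hard part will be the inseparable bookkeeping in the converse, namely \emph{maintaining} $p$-compatibility when extending $\tau$ over transcendental (and separable) elements, i.e.\ choosing images that respect all pre-existing $p$-power incidences in $\Omega$. This is exactly the step that uses separability of $\Omega/k$: equivalently, that a $p$-basis of $k$ stays $p$-independent in $\Omega$, so that the subfields $\Omega^{p^n}$ interact with $k$ as predictably as possible, or, in transcendence-basis language, that the purely inseparable part of $\Omega$ over $k(B)^{\sep}$ is ``defined over $k(B)$''. That some adjustment is genuinely needed is already visible over a perfect $k$ in the example $\Omega=k(u,v)^{\sep}\bigl((uv)^{1/p}\bigr)$: the naive map $x=v\mapsto v+1$, $u\mapsto u$ sends $uv\in\Omega^{p}$ to $uv+u\notin\Omega^{p}$ and so fails to preserve $\Omega$, whereas the $p$-compatible choice $v\mapsto v+1$, $u\mapsto uv/(v+1)$ keeps $uv$ (hence $(uv)^{1/p}$) fixed and does extend to an automorphism of $\Omega$ moving $v$. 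Showing such a compatible choice always exists is where the real work lies, and is precisely what breaks down when $\Omega/k$ is inseparable.
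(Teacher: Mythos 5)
Your first direction is correct and complete. The minimal-relation argument --- every $\sigma\in\aut(\Omega/k)$ extends to $\overline{\Omega}$ and fixes all elements purely inseparable over $k$, so the unique normalized $\Omega$-linear relation among a minimal $k$-independent, $\Omega$-dependent subset of $k^{1/p}$ has $\aut(\Omega/k)$-invariant coefficients not all in $k$ --- is a clean, self-contained proof of the contrapositive. This is a genuinely different route from the paper, which simply cites Bourbaki for the general fact that any field $L$ is separable over $L^{\aut(L/K)}$; your version proves the needed implication directly from MacLane's criterion, and, like the paper's, correctly avoids any use of separable closedness in this direction.

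The converse, however, is where the entire content of the lemma sits, and your proposal does not prove it. Your reduction is fine and matches the paper: an element of $\Omega^{\aut(\Omega/k)}$ algebraic over $k$ lies in $k$, since subextensions of separable extensions are separable and $k$ is separably closed; so one must move every transcendental $\alpha$. But the back-and-forth construction you then outline rests entirely on the extension step you yourself defer: given a partial $p$-compatible isomorphism, extend it over a transcendental (or separable algebraic) element \emph{while preserving $p$-compatibility}. You state that ``showing such a compatible choice always exists is where the real work lies'' and do not do that work; since this is exactly the step where separability of $\Omega/k$ must enter, what you have is a plan for a proof, not a proof. I will add that your example is a valuable observation in its own right: the paper disposes of this direction in one sentence (``since $\alpha$ extends to a transcendence basis of $\Omega/k$, the map $\alpha\mapsto\alpha+1$ extends to an automorphism of $\Omega$ fixing $k$''), and your example $\Omega=k(u,v)^\sep\bigl((uv)^{1/p}\bigr)$ over a perfect $k$ shows this assertion cannot be taken at face value: for the basis $\{u,v\}$, no extension of $v\mapsto v+1$, $u\mapsto u$ can preserve $\Omega$, because it would send $uv\in\Omega^p$ to $uv+u\notin\Omega^p$. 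So the automorphism must be built compatibly with the $p$-power structure (as your corrected map, equivalently the basis $\{(uv)^{1/p},v\}$, does). You have thus correctly located the difficulty, and even demonstrated that it is a real one --- but naming the invariant and fixing one example does not establish the general extension step, and until that is supplied the implication ``separable $\Rightarrow \Omega^{\aut(\Omega/k)}=k$'' remains unproven in your write-up.
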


 \begin{proof}
   Without any assumptions on $K$, if $\Omega/K$ is any field extension, then $\Omega$ is separable
  over $\Omega^{\operatorname{Aut}(\Omega/K)}$; see \emph{e.g.} \cite[\S 15.3,
  Prop.~7]{bourbakifields}. In particular, if $\Omega^{\operatorname{Aut}(\Omega/K)} =
  K$, then $\Omega/K$ is separable.

  Conversely, assume that $\Omega/k$ is a separable extension of
  separably closed fields. Since $k$ is separably closed and since any
  sub-extension $\Omega/K/k$ satisfies $K/k$ separable \cite[Prop.~8, p.V.116]{bourbakifields}, in order to show
  that $\Omega^{\operatorname{Aut}(\Omega/k)} =k$, it is enough to show
  that $\Omega^{\operatorname{Aut}(\Omega/k)} /k$ is algebraic. Let
  $\alpha \in \Omega$ be a transcendental element over $k$.  Since
  $\alpha$ extends to a transcendence basis of $\Omega/k$, the map
  $\alpha \mapsto \alpha+1$ extends to an automorphism of $\Omega$ which
  fixes $k$.  Consequently, no element of $\Omega$ transcendental over
  $k$ is fixed by all of $\aut(\Omega/k)$, and
  $\Omega^{\operatorname{Aut}(\Omega/k)} /k$ is algebraic, as desired.
 \end{proof}

 Recall that if $L'/L/K$ is a tower of field extensions, then on the one hand,  
if $L'/L$ is separable and $L/K$ is separable, then $L'/K$ is separable \cite[Prop.~9, p.V.117]{bourbakifields}.   
 On the other hand, if  $L'/K$ separable,
 then $L/K$ is separable \cite[Prop.~8, p.V.116]{bourbakifields}, but $L'/L$
 may not be separable (\emph{e.g.},
 $\mathbb{F}_p(T)/\mathbb{F}_p(T^p)/\mathbb{F}_p$). Nevertheless, we have:

 \begin{lem}\label{L:L/KsepCl}
  Suppose that $L/K$ is a separable extension of fields.  Then $L^{\sep}/K^{\sep}$
  is separable.
 \end{lem}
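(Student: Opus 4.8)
The plan is to reduce the statement to the linear-disjointness criterion for separability recalled in \S\ref{S:SepExt}, and then to apply the standard tower law for linear disjointness. I may assume $\operatorname{char}(K) = p > 0$, since in characteristic $0$ every extension is separable. Fix an algebraic closure $\bar L$ of $L$ and regard $K \subseteq K^{\sep} \subseteq \bar K \subseteq \bar L$ and $K \subseteq L \subseteq L^{\sep} \subseteq \bar L$, noting that $K^{\sep} \subseteq L^{\sep}$ (the compositum $K^{\sep}L$ is separable algebraic over $L$, hence contained in $L^{\sep}$). I will use the criterion that an extension $E/F$ inside $\bar L$ is separable precisely when $E$ and $F^{1/p} := \{x \in \bar L : x^p \in F\}$ are linearly disjoint over $F$; this is the criterion of \S\ref{S:SepExt} after applying the Frobenius automorphism $x \mapsto x^p$ of $\bar L$ to pass between ``$E^p$ and $F$ linearly disjoint over $F^p$'' and ``$E$ and $F^{1/p}$ linearly disjoint over $F$''.

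The key computation will be the identity $(K^{\sep})^{1/p} = K^{\sep}\cdot K^{1/p}$ inside $\bar L$. The inclusion $\supseteq$ is immediate, since $(K^{\sep})^{1/p}$ is a field containing both $K^{\sep}$ and $K^{1/p}$. For $\subseteq$, I take $y \in \bar L$ with $b := y^p \in K^{\sep}$; since $b$ is separable over $K$ and the $p$-th-root map $x \mapsto x^{1/p}$ is a field automorphism of $\bar L$ carrying $K$ onto $K^{1/p}$ and $b$ onto $y$, the element $y$ is separable over $K^{1/p}$. Because $\bar K/K^{\sep}$ is purely inseparable and $K^{\sep}\subseteq K^{\sep}K^{1/p}\subseteq \bar K$, the extension $\bar K/K^{\sep}K^{1/p}$ is purely inseparable; hence the separable closure of $K^{1/p}$ in $\bar L$ is exactly $K^{\sep}\cdot K^{1/p}$, and so $y \in K^{\sep}\cdot K^{1/p}$, as desired.

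With this in hand I would argue as follows. First, $L^{\sep}/K$ is separable, being the composite of the separable algebraic extension $L^{\sep}/L$ with the separable extension $L/K$ \cite[Prop.~9, p.V.117]{bourbakifields}; equivalently, $L^{\sep}$ and $K^{1/p}$ are linearly disjoint over $K$. Likewise $K^{\sep}/K$ is separable, so $K^{\sep}$ and $K^{1/p}$ are linearly disjoint over $K$. Now apply the tower law for linear disjointness to $K \subseteq K^{\sep} \subseteq L^{\sep}$ tested against $C := K^{1/p}$: since $L^{\sep}$ and $K^{1/p}$ are linearly disjoint over $K$ while $K^{\sep}$ and $K^{1/p}$ are linearly disjoint over $K$, it follows that $L^{\sep}$ and $K^{\sep}\cdot K^{1/p}$ are linearly disjoint over $K^{\sep}$. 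By the displayed identity, $K^{\sep}\cdot K^{1/p} = (K^{\sep})^{1/p}$, so $L^{\sep}$ and $(K^{\sep})^{1/p}$ are linearly disjoint over $K^{\sep}$, which is exactly the assertion that $L^{\sep}/K^{\sep}$ is separable.

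I expect the main obstacle to be the identity $(K^{\sep})^{1/p} = K^{\sep}\cdot K^{1/p}$: this is the one place where the separability of $K^{\sep}/K$ (rather than that of an arbitrary algebraic extension) enters essentially, and it rests on the fact that passing to the purely inseparable extension $K^{1/p}/K$ does not enlarge the separable closure. Everything else is bookkeeping with the linear-disjointness criterion and the tower law, both of which are standard and available in \cite{bourbakifields}.
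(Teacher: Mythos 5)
Your proof is correct, and it is worth comparing with the paper's, since the two share the same skeleton but differ in how the key step is packaged. Both arguments rest on exactly two inputs: (i) the separability of $L^{\sep}/K$, obtained as the composite of the separable algebraic extension $L^{\sep}/L$ with the separable extension $L/K$; and (ii) a stability property of $K^{\sep}$ under Frobenius, which in your write-up is the identity $(K^{\sep})^{1/p} = K^{\sep}\cdot K^{1/p}$ and in the paper is the identity $(K^{\sep})^p K = K^{\sep}$ --- these are literally the same statement, conjugated by the automorphism $x\mapsto x^{1/p}$ of $\bar L$, and both are proved by the same device (an element, or extension, that is simultaneously separable and purely inseparable over a field is trivial over it). The difference lies in how the conclusion is extracted from (i) and (ii). The paper proves by hand, via an explicit tensor-product computation (flatness of field extensions plus a localization argument, following the exercise cited as \cite[Exe.~4 p.V.165]{bourbakifields}), that for a tower $F/E/K$ with $F/K$ separable and $E^pK=E$ the extension $F/E$ is separable, and then applies this with $F=L^{\sep}$, $E=K^{\sep}$. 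You instead invoke, as a black box, the tower theorem for linear disjointness ($F$ and $C$ are linearly disjoint over $K$ if and only if $E$ and $C$ are linearly disjoint over $K$ and $F$ and $EC$ are linearly disjoint over $E$), applied to $K\subseteq K^{\sep}\subseteq L^{\sep}$ with $C=K^{1/p}$, together with MacLane's criterion stated on the $p^{-1}$ side of Frobenius rather than the $p$ side. Your route is shorter and makes the logical structure more transparent, at the cost of relying on the general tower theorem from the literature; the paper's route is self-contained, in effect reproving precisely the special case of that theorem it needs. Both are complete: your verification of $(K^{\sep})^{1/p}=K^{\sep}\cdot K^{1/p}$ (via pure inseparability of $\bar K/K^{\sep}K^{1/p}$), your use of the Frobenius automorphism of $\bar L$ to transport separability, and your application of the tower law are all sound.
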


 \begin{proof} 
           In characteristic $0$ there is nothing to show.  So let
  $p=\operatorname{char}K>0$.
  We start with a small observation \cite[Exe.~4 p.V.165]{bourbakifields}: 
     If
  $F/E/K$ is a tower of field extensions, with $F/K$ separable, then if $E^pK=E$,
  then $F/E$ is separable.  To prove this, it suffices to show that the natural
  map $F^p\otimes_{E^p} E \to F^pE$ is injective. By the assumption $E=E^pK$, we
  therefore must show $F^p \otimes_{E^p}E^pK\to F^p(E^pK)$ is injective.   Since
  $E^p/K$ is separable~\cite[Prop.~8, p.V.116]{bourbakifields},
  we have that $E^p\otimes_{K^p}K\hookrightarrow E^pK$ is injective.   Since
  field extensions are (faithfully) flat, tensoring by $F^p\otimes_{E^p}(-)$ we
  obtain
  \begin{equation}\label{E:L/KsepCl}
  F^p \otimes_{E^p} (E^p \otimes_{K^p}K) \hookrightarrow F^p\otimes_{E^p}E^pK \to
  F^p(E^pK).
  \end{equation}
  The composition is identified with the map $F^p\otimes_{K^p}K\to F^pK\subseteq
  F^p(E^pK)$, which is injective since $F/K$ is assumed to be separable.  However,
  since $E^pK$ is the field of fractions of $E^p \otimes_{K^p} K$ under the
  inclusion $E^p \otimes_{K^p} K\hookrightarrow E^pK$, we see that the right hand
  map $ F^p \otimes_{E^p} E^pK \to F^p (E^pK)$ in \eqref{E:L/KsepCl} is injective,
  as claimed, since it is obtained from the composition   $F^p \otimes_{E^p} (E^p
  \otimes_{K^p}K)  \to F^p(E^pK)$ in~\eqref{E:L/KsepCl} by localization.

  To prove the lemma, we apply the observation in the previous paragraph with
  $F=L^{\sep}$ and $E=K^{\sep}$.  Thus we just need to show that
  $(K^{\sep})^pK=K^{\sep}$.    Thus we have reduced to the following: if $E/K$
  is a separable algebraic extension,  then $E^pK=E$. Indeed, we have a tower of
  extensions $E/E^pK / K$. The extension $E/E^pK$ is purely inseparable (the
  $p$-th power of every element of $E$ belongs to $E^pK$) while the extension
  $E/K$ is separable. This implies $E=E^pK$.
 \end{proof}

 \subsection{$L/K$-images} 

 Let $L/K$ be a primary extension of fields, \emph{i.e.}, the algebraic
 closure of $K$ in $L$ is purely inseparable over $K$, or equivalently, 
 $K$ equals its separable closure in $L$.  Suppose $A/L$
 is an abelian variety.  The $L/K$-image of $A$ is a pair
 $(\im_{L/K}(A),\lambda)$ consisting of an abelian variety
 $\im_{L/K}(A)$ over $K$ and a homomorphism
 $\lambda : A\to (\im_{L/K}A)_L$ of abelian varieties over $L$ that is
 initial for pairs $(B,f)$ consisting of an abelian variety $B$ over
 $K$ and a homomorphism $f:A\to B_L$:  
 \begin{equation}\label{E:lambda}
 \xymatrix{
 A \ar[rd]_f \ar[r]^<>(0.5)\lambda& (\im_{L/K}A)_L \ar@{-->}[d]^{\exists !}\\
& B_L  
 }
 \end{equation}
 The idea of such an image (and
 the complementary notion of the trace, which is final for 
  pairs $(B,f)$ consisting of an abelian variety $B$ over
 $K$ and a homomorphism $f:B_L\to A$) goes back to Chow, but we appeal to \cite{conradtrace} as a
 modern and comprehensive reference.  The existence of $\im_{L/K}(A)$
 is proven in \cite[Thm.~4.1]{conradtrace}. 
         
  For later reference, given a separable extension $M/K$, and an algebraically disjoint extension $L/K$ (\cite[Def.~5, p.V.108]{bourbakifields}), we have that $LM/L$ is separable \cite[Prop.~5, p.V.131]{bourbakifields}.
  Similarly, given a purely inseparable extension $L/K$ and an arbitrary
    extension $M/K$, we have that $LM/M$ is purely inseparable.   
    In other words, if $M/K$ is separable, and $L/K$ is purely inseparable (and therefore algebraic, so that $L$ is algebraically disjoint from $M$),  then we have a tower:
 \begin{equation}\label{E:tower}
 \xymatrix@R=.5em{
   LM \ar@{-}[dd]_{\text{sep}} \ar@{-}[dr]^{\text{insep}} \\
   & M \ar@{-}[dd]^{\text{sep}}\\
   L\ar@{-}[dr]_{\text{insep}}&\\
   &K
  }
\end{equation}
where ``insep'' means ``purely inseparable''.

       One fact we will use
 later is that formation of the image is insensitive to separable field
        extensions.
          Indeed, a
 special case of \cite[Thm.~5.4]{conradtrace} states:

 \begin{lem}
  \label{L:imageabvar}
  If $L/K$ is purely inseparable, if $M/K$ is separable, and $A/L$ is
  an abelian variety, then
  \[
 \im_{LM/M}(A_{LM})  \iso  (\im_{L/K}(A))_{M}. \qed
  \]
\end{lem}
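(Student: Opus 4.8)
I would verify directly that $(\im_{L/K}A)_M$, equipped with a base change of the universal homomorphism $\lambda$ from \eqref{E:lambda}, satisfies the universal property that \emph{defines} $\im_{LM/M}(A_{LM})$. First note that $\im_{LM/M}(A_{LM})$ exists by \cite[Thm.~4.1]{conradtrace}, since $LM/M$ is purely inseparable (the right-hand edge of \eqref{E:tower}) and hence primary. Next observe that the square \eqref{E:tower} is a pushout of fields: because $L/K$ is purely inseparable the ring $L\otimes_K M$ has a single point (it is radicial over $M$), and because $M/K$ is separable $M\otimes_K L$ is reduced, so $L\otimes_K M$ is a reduced ring with a unique prime ideal, i.e.\ a field, and the surjection onto the compositum gives $LM=L\otimes_K M$. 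Consequently $((\im_{L/K}A)_M)_{LM}=(\im_{L/K}A)_{LM}$, and base changing $\lambda\colon A\to(\im_{L/K}A)_L$ to $LM$ yields a canonical $LM$-homomorphism $\mu\colon A_{LM}\to((\im_{L/K}A)_M)_{LM}$, the candidate universal map. By initiality of $\im_{LM/M}(A_{LM})$, the map $\mu$ factors through a unique $M$-homomorphism $c\colon \im_{LM/M}(A_{LM})\to(\im_{L/K}A)_M$, and the lemma reduces to showing $c$ is an isomorphism.

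\textbf{Reduction to a $\Hom$ comparison.} By Yoneda it suffices to show that $(\im_{L/K}A)_M$ corepresents the same functor as $\im_{LM/M}(A_{LM})$, namely to produce, naturally in an abelian variety $B/M$, a bijection $\Hom_M\big((\im_{L/K}A)_M,B\big)\iso\Hom_{LM}(A_{LM},B_{LM})$ compatible with $\mu$; this matches the defining bijection $\Hom_M\big(\im_{LM/M}(A_{LM}),B\big)\iso\Hom_{LM}(A_{LM},B_{LM})$ and forces $c$ to be an isomorphism. Concretely, given $B/M$ and $f\colon A_{LM}\to B_{LM}$, the goal is a unique $g\colon(\im_{L/K}A)_M\to B$ over $M$ with $g_{LM}\circ\mu=f$. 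The one general input I would use freely is that homomorphisms between abelian varieties are insensitive to purely inseparable base change; applied along the edge $LM/M$ this freely interchanges $\Hom_M$ and $\Hom_{LM}$ for abelian varieties already defined over $M$.

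\textbf{Main obstacle and its resolution.} The difficulty is that $B$ is defined only over $M$, not over $K$, so $f$ cannot be inserted directly into the universal property \eqref{E:lambda} of $\im_{L/K}A$, which is phrased over $L/K$; this is precisely where the separability of $M/K$ must enter, and I expect it to be the crux. My plan is to reduce to $M/K$ separable \emph{algebraic} and then to $M=K^{\sep}$: since $A$, $B$, and $f$ are of finite presentation they descend to a finitely generated, hence (after passing to a suitable intermediate field) manageable, subextension of $M/K$, and $\im$ is compatible with the resulting filtered union of fields; the structural results of \S\ref{S:SepExt}, in particular \Cref{L:L/KsepCl} and \Cref{L:autLKsepclosed}, then let me pass to the separable closure and descend $f$ by averaging over the finite $\gal$-action on a finite Galois subextension, using insensitivity to purely inseparable extensions to control the interaction with the inseparable edges $LM/M$ and $LM/L$ of \eqref{E:tower}. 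A more geometric alternative is to check $c$ is an isomorphism after base change to an algebraic closure $\bar M\supseteq L$, for which one needs the base-change behaviour of $\im_{L/K}A$ under the purely inseparable extension $\bar K/K$ (governed by relative Frobenius); either route reproves exactly the special case of \cite[Thm.~5.4]{conradtrace} being invoked.
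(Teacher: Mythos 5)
Your skeleton is sound: the identification $LM \cong L\otimes_K M$ (your argument that $L\otimes_K M$ is reduced with a single prime, hence a field, is correct), the construction of the comparison map $c$ from the universal property of $\im_{LM/M}(A_{LM})$, and the reduction of the lemma to a natural bijection $\Hom_M\bigl((\im_{L/K}A)_M,B\bigr)\cong \Hom_{LM}(A_{LM},B_{LM})$ for abelian varieties $B/M$. But note first that the paper does not prove this lemma at all: it is quoted, with the \qed in the statement, as a special case of \cite[Thm.~5.4]{conradtrace}, so you have set yourself the task of reproving Conrad's theorem — and the crux of your plan has a genuine gap. After descending $B$ and $f$ to a finitely generated subextension $M_0\subseteq M$, you propose to ``reduce to $M/K$ separable algebraic and then to $M=K^{\sep}$.'' This cannot work as stated: a finitely generated separable extension is in general transcendental (e.g.\ $M_0=K(t)$), and no passage to separable closures makes it algebraic. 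The transcendental case is not a technicality: it is exactly the case where $B$ (say, a nonconstant elliptic curve over $K(t)$) is defined over no algebraic extension of $K$, so neither the universal property of $\im_{L/K}A$ nor Galois-theoretic descent can reach it; this regular/primary situation is the entire reason Chow's image-and-trace theory, and Conrad's Theorem 5.4, exist, and your sketch says nothing about it.

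Two smaller points. First, ``descend $f$ by averaging over the finite $\gal$-action'' is the wrong mechanism even in the algebraic case: one descends a morphism by exhibiting equivariance (which here would follow from uniqueness in the universal property), not by averaging, which under the group law of $B$ produces a different homomorphism. A cleaner treatment of the finite separable case — the one this paper itself uses in the analogous Lemma~\ref{L:albBCfinsep} — is Weil restriction:
\[
\Hom_{LM}(A_{LM},B_{LM})\cong \Hom_L\bigl(A,(\res_{M/K}B)_L\bigr)\cong \Hom_K\bigl(\im_{L/K}A,\res_{M/K}B\bigr)\cong \Hom_M\bigl((\im_{L/K}A)_M,B\bigr),
\]
using that $\res_{M/K}B$ is an abelian variety over $K$ and that $\res_{M/K}$ commutes with the base change along $L/K$ precisely because $L\otimes_K M=LM$. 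Second, your ``geometric alternative'' (checking $c$ over $\bar M$) is circular, as you half-admit: it presupposes exactly the control of $\im_{L/K}A$ under inseparable base change that the lemma is meant to encode. In sum, your proposal correctly reduces the lemma to its real content, but that content — Conrad's Theorem 5.4, above all its transcendental case — is left unproved, whereas the paper simply invokes it.
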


In fact, we will want a small strengthening of this lemma (\Cref{C:ImAbVarSt}).  To obtain this strengthening, we will first need a few more small results.  First, we will need a slight variation on 
Mumford's Rigidity Lemma \cite[Prop.~6.1(1)]{mumfordGIT}.  If $V$ is a scheme, we use $\abs V$ to denote the underlying topological space.

\begin{lem}[Rigidity Lemma]
  \label{L:GITrigidity}
   Given a diagram
\[
\xymatrix{
X \ar[rr]^f \ar[dr]^p & &Y \ar[dl]_q \\
& S \ar@/^1pc/[ul]^\epsilon}
\]
where $S$ is a Noetherian scheme and:
\begin{enumerate}[label={(\alph*)}]
\item $p_*\mathcal O_X \iso \mathcal O_S$;  \label{L:GITrig-1}
\item $\epsilon$ is a section of $p$, and $\abs S$ consists of a single point, $s$; and 
\item the set-theoretic image $f(\abs{X_s})$ is a single point of $|Y|$; 
 
\end{enumerate}
Then there exists a section $\eta:S \to Y$ of $q$ such that $f = \eta
\circ p$:
\[
\xymatrix{
X \ar[rr]^f \ar[dr]^p & &Y \ar[dl]_q \\
& S \ar@/^1pc/[ul]^\epsilon \ar@/_1pc/@{-->}[ur]_\eta}
\]
\end{lem}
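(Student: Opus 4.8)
The final statement in the excerpt is the Rigidity Lemma (Lemma \ref{L:GITrigidity}), a variation on Mumford's Rigidity Lemma. Let me sketch how I'd prove this.
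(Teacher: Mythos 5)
There is no proof here: your proposal ends immediately after announcing that you will sketch one. Nothing is claimed, no reduction is made, and no argument is given, so there is nothing that can be checked or salvaged.

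For comparison, the paper's proof is a one-paragraph reduction to Mumford's Rigidity Lemma \cite[Prop.~6.1(1)]{mumfordGIT}: the statement is identical to Mumford's except that his hypotheses that $p$ be flat and that $H^0(X_s,\mathcal O_{X_s})\cong\kappa(s)$ are replaced by hypothesis \ref{L:GITrig-1}, namely $p_*\mathcal O_X\cong\mathcal O_S$; since Mumford uses his two hypotheses at exactly one point of his proof, and only to deduce $p_*\mathcal O_X\cong\mathcal O_S$, his argument goes through verbatim under hypothesis \ref{L:GITrig-1}. Had you wished to argue directly rather than cite Mumford, the key steps are: since $\abs S=\{s\}$, the fiber $X_s$ has the same underlying space as $X$, so $f(\abs X)$ is a single point $y$; choosing an affine open $V\subseteq Y$ containing $y$, the morphism $f$ factors through $V$, hence corresponds to a ring map $\Gamma(V,\mathcal O_V)\to\Gamma(X,\mathcal O_X)\cong\Gamma(S,\mathcal O_S)$ (the isomorphism being hypothesis \ref{L:GITrig-1}), which defines $\eta:S\to V\subseteq Y$ with $f=\eta\circ p$; finally $q\circ\eta=q\circ\eta\circ p\circ\epsilon=q\circ f\circ\epsilon=p\circ\epsilon=\mathrm{id}_S$, using the section $\epsilon$, so $\eta$ is a section of $q$. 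Either route would have been acceptable; supplying neither is the gap.
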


\begin{proof}
This is almost verbatim \cite[Prop.~6.1(1)]{mumfordGIT}.  Indeed, Mumford's hypotheses in \cite[Prop.~6.1(1)]{mumfordGIT} are the same, except that our assumption \ref{L:GITrig-1} is replaced  in \cite[Prop.~6.1(1)]{mumfordGIT} by the assumption that  
$p$ be flat and that $H^0(X_s,\mathcal O_{X_s})\iso \kappa(s)$.
However, these two hypotheses are only used in the proof of \cite[Prop.~6.1(1)]{mumfordGIT} at the top of  \cite[p.116]{mumfordGIT}, where
the reader is invited 
 to verify that these conditions imply that
$p_*\mathcal O_X \iso \mathcal O_S$; but this is our hypothesis \ref{L:GITrig-1}.
\end{proof}

Chow's rigidity theorem for abelian varieties (\emph{e.g.,} \cite[Thm.~3.19]{conradtrace}) implies that a morphism of abelian varieties which is defined after  a purely inseparable extension is already defined over the base field.  Here, we use Mumford's rigidity lemma to prove an analogous statement when the source of the morphism is an arbitrary geometrically integral variety.

\begin{pro}
  \label{L:rigidity}
Let $L/K$ be a purely inseparable extension of fields. Let $(U,u)/K$ be a
 pointed geometrically integral 
separated scheme of finite type, let $A/K$ be an abelian
variety, and suppose that   
$g:(U_L, u_L) \to (A_L, \mathbf 0_{A_L})$ is a pointed $L$-morphism.  Then $g$
descends to $K$.
\end{pro}

\begin{proof}
	   Since $U$ and $A$ are of finite type over $K$, so is $g$.  
    Therefore, 
   there is a subextension $L_0 \subseteq L$,
  finite over $K$, over which $g$ is defined.  Replacing $L$ by $L_0$
  if necessary, we may and do assume $L/K$ is finite and purely
inseparable.  Then $\spec(L\otimes_KL)$ is a Noetherian scheme with a
single point.  This point has residue field $L$; let $s: \spec L
\hookrightarrow \spec(L\otimes_KL)$ be its inclusion.

Let $p_i: \spec(L\otimes_KL) \to \spec L$ be the two projections.  As
usual, since $U_L$ is the base change of a $K$-scheme, there is a
canonical isomorphism $p_1^*(U_L) \iso p_2^*(U_L)$, and we simply call
this object $U_{L\otimes_KL}$ .  We similarly define the pullback of
$u$, $A$ and $\mathbf 0_A$ to $L\otimes_KL$.  
      We want to use fpqc descent to show that $g:U_L\to A_L$ descends to $K$; for this we need to show an equality
of morphisms
\begin{equation}\label{E:RigLemE1}
\xymatrix{
p_1^*g \stackrel?=p_2^*g: U_{L\otimes_KL}\ar[r] & A_{L\otimes_KL}.}
\end{equation}
This equality will follow from the Rigidity Lemma (\Cref{L:GITrigidity}), as we will see.  At the moment, however, we have a diagram
\begin{equation}\label{E:RigLemDiagDesc0}
\xymatrix{
 U_{L\otimes_KL} \ar[rr]^{p_1^*g - p_2^* g} \ar[dr]^{} & &A_{L\otimes_KL}\ar[dl]^{} \\
& \spec L\otimes_KL \ar@/^1.5pc/[ul]^{u_{L\otimes_KL}} & 
}
\end{equation}
If $U/K$ were proper, one could easily check that the hypotheses of the Rigidity Lemma held for the diagram \eqref{E:RigLemDiagDesc0} (see the proof below), and then it would follow quickly from the Rigidity Lemma that equality holds in \eqref{E:RigLemE1} (again, see the proof below).  But, since we are not assuming that $U/K$ is proper, we must do a little work first to get around this issue.

To begin, let $\varpi:X\to \spec K$ be a (Nagata) compactification of $U$ \cite{conradNagata}. Since
$X$ is proper over $K$ and contains the geometrically integral scheme
$U$ as an open dense set,  $X$ is geometrically integral (\emph{e.g.},
\cite[Prop.~5.51(iii)]{GW20})  and so
 (\emph{e.g.}, \cite[\href{https://stacks.math.columbia.edu/tag/0FD2}{Lem.~0FD2}]{stacks-project})
$
  \varpi_*\mathcal O_X  \iso \mathcal O_{\spec K}$. 
   We now base change to $\operatorname{Spec}L$.  
Using \cite[Lem.~2.2]{lutkebohmert93} or \cite[Rem.~2.5]{conradNagata}, there is a
$U_{L}$-admissible blowup 
    \[
\xymatrix{
\widetilde X \ar[rr]^{\varpi'} \ar[dr]_{\widetilde\varpi} & &X_{L}\ar[dl]^{\varpi_L} \\
& \spec L
}
\]
such that $g:U_L\to A_L$ extends to a morphism
$$\widetilde{g}:\widetilde X \to A_{L}.$$  Moreover, using the same argument as before, \emph{i.e.}, that 
$\widetilde X$ is proper over $L$ and contains the geometrically integral scheme
$U_L$ as an dense open subset,  we have that  $\widetilde X$ is geometrically integral  so that 
$
  \tilde \varpi_*\mathcal O_{\widetilde X}  \iso \mathcal O_{\spec L}$.

We now base change to $L\otimes_KL$ and obtain a diagram 
\[
\xymatrix{
\widetilde X_{L\otimes_KL} \ar[rr]^{\varpi'_{L\otimes_K L}} \ar[dr]_{\widetilde\varpi_{L\otimes_KL}} & &X_{L\otimes_KL}\ar[dl]^{\varpi _{L\otimes_KL}} \\
& \spec L\otimes_KL
}
\]
Via cohomology and base change for flat base change, and using that $
  \tilde \varpi_*\mathcal O_{\widetilde X}  \iso \mathcal O_{\spec L}$, we have  that
$(\tilde \varpi_{L\otimes_KL})_* \mathcal O_{\widetilde X_{L\otimes_KL}}  \iso \mathcal O_{\spec
                                                L\otimes_KL}$.

Our goal now is to show that
\begin{equation}\label{E:RigLemE2}
\xymatrix{
 p_1^*\tilde g \stackrel?= p_2^*\tilde g: \widetilde X_{L\otimes_KL}\ar[r] & A_{L\otimes_KL}},
\end{equation}
as this will establish \eqref{E:RigLemE1}, and we will be done.
               For this, we will want to apply the Rigidity Lemma (\Cref{L:GITrigidity}) to the diagram
\begin{equation}\label{E:RigLemDiagDesc}
\xymatrix{
\widetilde X_{L\otimes_KL} \ar[rr]^{p_1^*\tilde g - p_2^*\tilde g} \ar[dr]^{\widetilde\varpi_{L\otimes_KL}} & &A_{L\otimes_KL}\ar[dl]^{} \\
& \spec L\otimes_KL \ar@/^1.5pc/[ul]^{u_{L\otimes_KL}} & 
}
\end{equation}
However, to apply the Rigidity Lemma we still need to check that    $( {p_1^*\tilde g}-{p_2^*\tilde g})(|(\widetilde X_{L\otimes_KL})_s|)$ is set-theoretically a single point of $|(A_{L\otimes_K L})_s|$. 
To see this, we start with the observation that the fiber $(U_{L\otimes_KL})_s =
s^*U_{L\otimes_KL}$ is canonically isomorphic to $U_L$, and
$p_i^*g|_{(U_{L\otimes_KL})_s}  = g$.  Similarly, $|(A_{L\otimes_K L})_s|=|A_L|$.
Our next claim is that $|(U_{L\otimes_KL})_s|$ is dense in $|(\widetilde X_{L\otimes_KL})_s|$, but this just follows since $U_L$ is dense in $\widetilde X$ by construction.  
  Now,  moving forward, we know that $(p_1^*g-p_2^*g)(|(U_{L\otimes_KL})_s|)=|\mathbf 0_{(A_{L\otimes_KL})_s}|$, where here we are denoting by $|\mathbf 0_{(A_{L\otimes_KL})_s}|$ the support of the image of $\mathbf 0_{(A_{L\otimes_KL})_s}:(\operatorname{Spec}L\otimes_K L)_s\to (A_{L\otimes_KL})_s$. 
 This is only an equality on $|(U_{L\otimes_KL})_s|$. 
 However, we know that $s^*({p_1^*\tilde g}-{p_2^*\tilde g})$, as a continuous map $|(\widetilde X_{L\otimes_K L})_s|\to |(A_{L\otimes_KL})_s|$, must take the closure of  $|(U_{L\otimes_K L})_s|$ to the closure of the point $|\mathbf 0_{(A_{L\otimes_KL})_s}|$. 
 But, since $|\mathbf 0_{(A_{L\otimes_KL})_s}|$ is a closed point of $|(A_{L\otimes_KL})_s|$ and $|(\widetilde X_{L\otimes_K L})_s|$ is the closure of $|(U_{L\otimes_K L})_s|$, we see that $s^*( {p_1^*\tilde g}-{p_2^*\tilde g})(|(\widetilde X_{L\otimes_KL} )_s|)=\mathbf 0_{(A_{L\otimes_KL})_s}\in |(A_{L\otimes_KL})_s|$ is a single point.

 Consequently, we can apply the Rigidity Lemma to  diagram \eqref{E:RigLemDiagDesc}, and we find that 
${p_1^*\tilde g}$ and ${p_2^*\tilde g}$ differ by a section
$\eta$ of  $A_{L\otimes_KL}$ over $\operatorname{Spec}L\otimes_KL$.
It remains to show that this section $\eta$ coincides
  with $\mathbf
0_{A_{L\otimes_KL}}$. 
For this it suffices to show that ${p_1^*\tilde g}$ and ${p_2^*\tilde g}$  are equal along a section of $\widetilde X_{L\otimes_KL}$ over $\operatorname{Spec}L\otimes_KL$, and of course,  it therefore suffices to check equality along a section of $U_{L\otimes_KL}\subseteq \widetilde X_{L\otimes_KL}$ over $\operatorname{Spec}L\otimes_KL$; we will use the section $u_{L\otimes_KL}$. 
 Since $g$ takes $u_L$ to $\mathbf 0_{A_L}$, we have that 
$p_1^*g$ and $p_2^*g$ both take the section $p_1^*u_L=p_2^*u_L=u_{L\otimes_KL}$  of $U_{L\otimes_K L}$ to
$\mathbf 0_{A_{L\otimes_KL}}$, and thus $\eta = \mathbf
0_{A_{L\otimes_KL}}$.  Note that here we have used that $u$ is defined over $K$, to identify $p_1^*u_L=p_2^*u_L=u_{L\otimes_KL}$.  
    \end{proof}

To implement this descent result in the setting we want to use it, we need one more result, which states that rational maps that extend to a morphism after base change of field, extend to a morphism over the ground field, as well. 

\begin{lem}\label{L:RatMapLK}
Let $V$ be a reduced scheme of finite type over a field $K$, let $T/K$
be a separated scheme of finite type, let $U\subseteq V$ be a dense
open subset, and let $L/K$ be an arbitrary extension of fields.  Given a morphism $f:U\to T$ over $K$, such that $f_L:U_L\to T_L$ extends to a morphism $V_L\to T_L$, we have that $f:U\to T$ extends to a morphism $V\to T$ over $K$.
\end{lem}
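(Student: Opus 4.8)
The plan is to descend the given extension $g'\colon V_L\to T_L$ of $f_L$ along the faithfully flat quasi-compact cover $\spec L\to\spec K$. First I would reduce to the case that $L/K$ is finitely generated: since $V$, $T$ and $f$ are of finite type over $K$, a standard limit argument shows that $g'$ is already defined over some finitely generated subextension $L_0/K$, say $g'_0\colon V_{L_0}\to T_{L_0}$, and $g'_0$ still extends $f_{L_0}$ because $(g'_0|_{U_{L_0}})_L=g'|_{U_L}=f_L=(f_{L_0})_L$ and $\Hom_{L_0}(U_{L_0},T_{L_0})\to\Hom_L(U_L,T_L)$ is injective. Replacing $L$ by $L_0$, I may assume $L/K$ finitely generated, so that $L\otimes_K L$ is Noetherian (a localization of the finitely generated algebra $B\otimes_K B$, where $L=\operatorname{Frac}B$). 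Now, because $V$ and $T$ are defined over $K$ and morphisms of schemes form an fpqc sheaf, it suffices to prove that the two pullbacks $p_1^*g'$ and $p_2^*g'$ along the projections $p_1,p_2\colon\spec(L\otimes_K L)\to\spec L$ agree as morphisms $V_{L\otimes_K L}\to T_{L\otimes_K L}$; descent of morphisms then produces a unique $\bar g\colon V\to T$ over $K$ with $\bar g_L=g'$.

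To obtain $p_1^*g'=p_2^*g'$, I would use that $T_{L\otimes_K L}$ is separated (separatedness being stable under base change) together with the rigidity principle that two morphisms to a separated scheme which agree on a scheme-theoretically dense open subscheme must agree everywhere (their equalizer is a closed subscheme containing the scheme-theoretically dense open). The two pullbacks do agree on $U_{L\otimes_K L}$: indeed $p_i^*g'|_{U_{L\otimes_K L}}=p_i^*(g'|_{U_L})=p_i^*(f_L)$, and since $f_L$ is itself pulled back from $f$ over $K$, this common value equals $f_{L\otimes_K L}$ for both $i$. So everything reduces to showing that $U_{L\otimes_K L}$ is scheme-theoretically dense in $V_{L\otimes_K L}$.

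This schematic density is the crux, and the only place where inseparability intervenes. Over $K$ it is immediate: $V$ is reduced, so $\operatorname{Ass}(V)$ consists of the generic points of the irreducible components, all of which lie in the dense open $U$; hence $U$ is scheme-theoretically dense in $V$. The difficulty is that $V_{L\otimes_K L}$ need not be reduced when $L/K$ is inseparable (already $L\otimes_K L$ is non-reduced), so reducedness is unavailable after base change. Instead I would exploit that $V_{L\otimes_K L}\to V$ is flat (a base change of the flat map $\spec(L\otimes_K L)\to\spec K$) and invoke the behaviour of associated points under flat morphisms: every point of $\operatorname{Ass}(V_{L\otimes_K L})$ maps to a point of $\operatorname{Ass}(V)\subseteq U$, and its entire fibre then lies in $U_{L\otimes_K L}$. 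Thus $U_{L\otimes_K L}\supseteq\operatorname{Ass}(V_{L\otimes_K L})$, and since $V_{L\otimes_K L}$ is locally Noetherian this yields the desired scheme-theoretic density.

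Combining the three steps gives the descended morphism $\bar g\colon V\to T$ over $K$, and $\bar g|_U=f$ holds because both restrict to $f_L$ after base change, again by injectivity of $\Hom_K(U,T)\to\Hom_L(U_L,T_L)$. The main obstacle is precisely the loss of reducedness of $V_{L\otimes_K L}$ under inseparable base change; the device that overcomes it is the flat-base-change invariance of associated points, which lets \emph{scheme-theoretic density}—rather than reducedness—be the property that is genuinely preserved and carries the rigidity argument through.
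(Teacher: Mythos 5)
Your proof is correct, and it takes a genuinely different route from the paper's. The paper does not descend the extension $\tilde f_L\colon V_L\to T_L$ itself; instead it constructs the candidate extension over $K$ as the scheme-theoretic closure of the graph of $f$ inside $V\times_K T$, identifies the flat base change of that closure to $L$ with the graph of $\tilde f_L$ (using that scheme-theoretic image commutes with flat base change), and then invokes fpqc descent of the property of being an isomorphism to conclude that the closure projects isomorphically onto $V$. That argument never leaves $K$ and $L$: no cocycle condition over $L\otimes_K L$, hence no reduction to finitely generated extensions and no Noetherian worries about $L\otimes_K L$ — this economy is what the graph construction buys. Your argument — fpqc descent of morphisms along $\operatorname{Spec}L\to\operatorname{Spec}K$, with the cocycle condition $p_1^*\tilde f_L=p_2^*\tilde f_L$ checked via the closed equalizer and scheme-theoretic density of $U_{L\otimes_K L}$ — pays for the detour through $L\otimes_K L$ with the limit argument, but it buys real robustness at the density step: your appeal to the behaviour of associated points under flat morphisms uses only that $V$ is reduced over $K$. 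By contrast, at the corresponding step the paper justifies the identification of scheme-theoretic images ``since $V_L$ is reduced,'' which strictly speaking requires $V$ to be geometrically reduced (true in all the paper's applications, but not implied by the lemma's stated hypotheses); your associated-points argument, run over $L$ rather than $L\otimes_K L$, is exactly what makes that step work in the stated generality. In short, both proofs rest on the same two pillars — fpqc descent and scheme-theoretic density of $U$ after flat base change — but they use different descent statements (descent of morphisms vs.\ descent of isomorphisms) and different density arguments (associated points vs.\ reducedness plus topological density), with yours the more careful on the density side and the paper's the more economical overall.
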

\begin{proof}
To fix some notation, write $\Gamma_g:X\to X \times_K Y$ for the graph of a $K$-morphism $g:X\to Y$, which is a closed 
embedding if $Y$ is separated, and denote by $\bar g(X)$ the scheme-theoretic image of $X$ under $g$.  
Now, considering the graph $\Gamma_f:U\to U\times_KT$ and the inclusion $\iota:U\hookrightarrow V$, 
we wish to show that the first projection $\bar{(\iota \times 1_T)}(\bar {\Gamma}_f(U))\to V$ is an isomorphism, so that the composition $V\stackrel{\iso}{\longrightarrow} \bar{(\iota \times 1_T)}(\bar {\Gamma}_f(U))\to T$ gives an extension of $f:U\to T$.  
We are given that $f_L$ extends to $\tilde f_L: V_L
  \to T_L$.
  Recalling that the scheme-theoretic image is stable under \emph{flat} base change (\emph{e.g.}, \cite[Prop.~V-8, p.217]{EH2000}), we have $\bar {(\iota\times 1)}((\bar {\Gamma}_f(U))_L)=\bar {(\iota\times 1)}( \bar {\Gamma}_{f_L}(U_L))=\bar {\Gamma}_{\tilde f_L}(V_L)$, where the last equality holds since $V_L$ is reduced.
          Finally, since $\bar {\Gamma}_{\tilde f_L}(V_L)\to V_L$ is an isomorphism, we can deduce that $\bar{(\iota \times 1_T)}(\bar \Gamma_f(U))\to V$ is an isomorphism, since isomorphisms satisfy fpqc descent (\emph{e.g.}, \cite[p.583]{GW20}).
\end{proof}

\begin{lem}\label{L:ImAbVarSt}
  Suppose  $L/K$ is a purely inseparable extension of fields, $A/L$ is
  an abelian variety, $(V,v)/K$ is a $K$-pointed (geometrically) connected
  and geometrically reduced scheme of finite type over $K$, and $f:(V_L,v_L)\to (A,\mathbf 0)$ is a pointed $K$-morphism.  
   Then the composition $\lambda \circ f$ of  pointed $L$-morphisms
\begin{equation}\label{E:ImSt1}
\xymatrix{
(V_L,v_L)\ar[r]^f & (A,\mathbf 0) \ar[r]^<>(0.5)\lambda &((\im_{L/K}A)_L,\mathbf 0)
}
\end{equation}
is initial for compositions of pointed $L$-morphisms $(V_L,v_L)\to (A,\mathbf 0 )\to (B_L,\mathbf 0)$, where $B/K$ is an abelian variety over $K$.

 If, moreover, $V$ admits an open cover $\{(U_i,u_i)\}$ by separated  (geometrically) connected
  and geometrically reduced schemes $U_i$ of finite type over $K$, with each irreducible component of the $U_i$ being geometrically integral and admitting a smooth $K$-point $u_i$,
   then 
the composition $\lambda\circ f$, 
descends to a unique pointed $K$-morphism $$\ubar f:(V,v)\longrightarrow (\im_{L/K}A,\mathbf 0).$$  
  \end{lem}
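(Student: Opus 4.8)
The first assertion, the initiality of $\lambda \circ f$, should follow almost formally from the universal property of the $L/K$-image \eqref{E:lambda} together with the universal property of the Albanese. Given any pointed $L$-morphism factoring as $(V_L,v_L) \to (A,\mathbf 0) \to (B_L,\mathbf 0)$ with $B/K$ an abelian variety, the second arrow is a homomorphism $h: A \to B_L$ of abelian varieties over $L$. By the defining property of $\im_{L/K}A$, there is a unique $K$-homomorphism (base-changed to $L$) $\bar h : (\im_{L/K}A)_L \to B_L$ with $\bar h \circ \lambda = h$, and this yields the required factorization of $\lambda \circ f$ through $(\im_{L/K}A)_L$. Uniqueness follows from the uniqueness in \eqref{E:lambda}. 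I would write this out carefully, checking that everything is compatible with the pointings.

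**The descent assertion is the substantive part, and this is where I expect the main obstacle.** The goal is to descend the $L$-morphism $\lambda \circ f : V_L \to (\im_{L/K}A)_L$ to a $K$-morphism $V \to \im_{L/K}A$. Here $\im_{L/K}A$ is an abelian variety \emph{over $K$}, so the target is already defined over $K$, and the descent problem is genuinely about the source $V$. The natural strategy is to descend on each member $U_i$ of the open cover and then glue. On a fixed $U_i$, I would work one irreducible component at a time: each such component is geometrically integral, separated, of finite type, and carries a smooth $K$-point. This is exactly the input for the Rigidity Lemma \Cref{L:rigidity}, which says that a pointed $L$-morphism from a geometrically integral separated pointed $K$-scheme to an abelian variety $A'/K$ descends to $K$. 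Applying \Cref{L:rigidity} with $A' = \im_{L/K}A$ descends $\lambda \circ f$ on (the smooth-pointed locus of) each component.

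**The gluing step is where the technical results \Cref{L:RatMapLK} and \Cref{L:rigidity} must be combined.** The difficulty is that \Cref{L:rigidity} requires a $K$-point on each component to serve as a base point, but there is no reason these chosen points are compatible across components, nor that the descended morphisms on different components agree on overlaps. To address the compatibility, the idea is to descend over a dense open subset first and then extend. I would let $U \subseteq V$ be a dense open on which the descent is unambiguous — for instance, a dense open of a single geometrically integral component containing its smooth $K$-point — descend $\lambda \circ f$ there via \Cref{L:rigidity} to get a $K$-morphism $g: U \to \im_{L/K}A$, and then invoke \Cref{L:RatMapLK} (with $T = \im_{L/K}A$, which is separated) to extend $g$ to a $K$-morphism $V \to \im_{L/K}A$, using that $V$ is reduced and that $g_L = (\lambda \circ f)|_{U_L}$ already extends over $V_L$ to $\lambda \circ f$ itself. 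The hypothesis that each component of each $U_i$ is geometrically integral with a smooth $K$-point ensures density of the locus where \Cref{L:rigidity} applies, so that \Cref{L:RatMapLK} has a dense open to start from on each component.

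**Uniqueness of the descended morphism** follows because $V_L \to V$ is faithfully flat (hence an epimorphism of schemes for this purpose): any two $K$-morphisms $V \to \im_{L/K}A$ agreeing after base change to $L$ must coincide, since $\im_{L/K}A$ is separated and $V$ is reduced, so two such maps agreeing on the dense locus where they are pinned down by \Cref{L:rigidity} agree everywhere. The main obstacle, to reiterate, is organizing the cover-and-glue argument so that the base points required by \Cref{L:rigidity} do not obstruct gluing; the resolution is to descend on a dense open and extend via \Cref{L:RatMapLK} rather than attempting to descend globally component-by-component and reconcile overlaps directly.
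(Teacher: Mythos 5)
Your treatment of the first assertion is fine and matches the paper, and your overall instinct---descend over a dense open via \Cref{L:rigidity}, then extend via \Cref{L:RatMapLK}---is exactly the paper's strategy. The genuine gap is in how you produce that dense open: you take $U$ to be ``a dense open of a \emph{single} geometrically integral component containing its smooth $K$-point.'' Such an open is dense in $V$ only when $V$ is irreducible, whereas \Cref{L:RatMapLK} requires $U$ to be dense in the scheme over which one extends. The lemma is stated, and is needed in the proof of \Cref{L:albBCinsep}, for schemes with several irreducible components---indeed that is precisely why the hypothesis equips \emph{every} irreducible component of every $U_i$ with a smooth $K$-point: a descended morphism on $V$ restricts to a descent on each component, so every component requires its own application of rigidity and none can be skipped. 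For reducible $V$ your construction produces a $K$-morphism on one component only and cannot be spread out. Your closing sentence gestures instead at running the argument ``on each component,'' but then you are gluing $K$-morphisms along irreducible components, which are \emph{closed}, not open, subschemes---exactly the difficulty your single-component choice was introduced to sidestep---and the proposal leaves that unresolved.

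The paper resolves this with a normalization trick that is absent from your sketch. One works with one $U=U_i$ at a time; gluing over the open cover $\{U_i\}$ is automatic, since two descents of the same $L$-morphism coincide by faithfully flat descent (for the same reason, your worry that descents on different pieces might disagree on overlaps is unfounded). On $U$, pass to the normalization $\nu\colon U^\nu\to U$: its connected components are \emph{disjoint} integral schemes, each carrying a $K$-point because the smooth $K$-point of the corresponding component of $U$ lifts through $\nu$ (this is where smoothness of the marked points is used), so \Cref{L:rigidity} applies to each component of $U^\nu$ separately with no compatibility or gluing question ever arising. Since $U$ is geometrically reduced it is generically smooth, so $\nu$ is an isomorphism over a dense open $U'\subseteq U$ meeting every component; the descent on $U^\nu$ therefore yields a descent of $\lambda\circ f$ over $U'$, and now \Cref{L:RatMapLK} legitimately extends the descended morphism from the genuinely dense open $U'$ to all of $U$. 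In short: the missing idea is to disjointify the components by normalizing, which is what lets you reach a dense open of \emph{all} of $U$ rather than of a single component.
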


\begin{proof}
The universal property of \eqref{E:ImSt1} follows from the definition of the $L/K$-image.  All that is left is to show the descent. It suffices to show descent on restriction to each of the $U_i$.  Let $U$ be any of the $U_i$.  

 In fact, it suffices to show descent on the normalization $U^\nu$ of
 $U$.  Indeed, since $U$ is geometrically reduced, it is generically
 smooth (smoothness may be verified fpqc locally on the base, and the base change to the algebraic closure is generically smooth), and therefore, there is a dense open subset $U'\subseteq U$ that is normal, so that the normalization $\nu:U^\nu\to U$ is an isomorphism over $U'$.  If we show that  the morphism from $(U^\nu)_L$ descends, then the morphism from $U'_L$ descends.  Then we use \Cref{L:RatMapLK}.
 
   So we can and will assume that $U$ is normal.  We can then focus on one irreducible component at a time, and we can assume that $U$ is integral, and therefore geometrically integral from our assumptions.  Since each irreducible component of $U$ was assumed to have a smooth $K$-point, this gives a $K$-point on each of the irreducible components of the normalizations.  Now use \Cref{L:rigidity}.
 \end{proof}

\begin{pro}\label{C:ImAbVarSt}
 In the situation of \Cref{L:ImAbVarSt}, 
  if in addition $M/K$ is a separable field extension, then the pointed $LM$-morphisms $f_{LM}$ and $\lambda_{LM}$ obtained by base change of \eqref{E:ImSt1} along $LM/L$ factor as 
\begin{equation}\label{E:ImSt2}
\xymatrix{
(V_{LM},v_{LM})\ar[r]^{f_{LM}} & (A_{LM},\mathbf 0) \ar@/^2pc/[rr]^<>(0.5){\lambda_{LM}}  \ar[r]& (( \im_{LM/M} A_{LM})_{LM},\mathbf 0)  \ar[r]&((\im_{L/K}A)_{LM},\mathbf 0).
}
\end{equation}

 If, moreover, $V$ admits an open cover $\{(U_i,u_i)\}$ by separated  (geometrically) connected
  and geometrically reduced schemes $U_i$ of finite type over $K$, with each irreducible component of the $U_i$ being geometrically integral and admitting a smooth $K$-point $u_i$,
  then, 
excluding morphisms with source or target $A_{LM}$,  the morphisms  in \eqref{E:ImSt2} descend uniquely to $M$ to give pointed $M$-morphisms 
\begin{equation}\label{E:ImSt3}
\xymatrix{
(V_{M},v_{M})\ar[r] \ar@/^2pc/[rr]^{\ubar f_M} & ( \im_{LM/M} A_{LM},\mathbf 0)  \ar[r]^\iso&((\im_{L/K}A)_{M},\mathbf 0)& 
}
\end{equation}
where the morphism on the right in \eqref{E:ImSt3} is the isomorphism in \Cref{L:imageabvar}.
  \end{pro}

\begin{proof}
The factorization \eqref{E:ImSt2} follows from the universal property
in the first part of \Cref{L:ImAbVarSt}  applied to the composition 
$
\xymatrix{
(V_{LM},v_{LM})\ar[r]^{f_{LM}} & (A_{LM},\mathbf 0)  \ar[r]& (( \im_{LM/M} A_{LM})_{LM},\mathbf 0) }
$, and the observation that $(\im_{L/K}A)_{LM}= ((\im_{L/K}A)_{L})_{LM}= ((\im_{L/K}A)_{M})_{LM}$ is obtained by pull back of an abelian variety over $M$.  

The descent in \eqref{E:ImSt3} comes from applying \Cref{L:ImAbVarSt}  to \eqref{E:ImSt2}.   
The fact that the composition in~\eqref{E:ImSt3} is identified with $\ubar f_M$ comes from the fact that the pull back of the composition in \eqref{E:ImSt3} to $LM$ is by definition  $\lambda_{LM}\circ f_{LM}$, and, also by definition, we have $\ubar f_{LM} =(\ubar f_L)_M=(\lambda \circ f)_{LM}=  \lambda_{LM}\circ f_{LM}$, so that the uniqueness of the descent shows that $\ubar f_M$ is the composition in \eqref{E:ImSt3}.

That the second morphism in  \eqref{E:ImSt3} is the isomorphism in \Cref{L:imageabvar} follows from the fact that this is the same descended morphism constructed by Conrad   \cite[Thm.~5.4]{conradtrace}.
\end{proof}

 \section{Proof of Theorem \ref{T:main}} \label{S:pfThmA}

We state a more precise version of Theorem \ref{T:main} here:

\begin{teo}[Separable base change]\label{T:mainAbody}
  Let $V$ be a
  geometrically connected and geometrically reduced 
  scheme of finite type over a field $K$.
    Then the Albanese data $(\operatorname{Alb}_{V/K},\ \operatorname{Alb}^1_{V/K},\  a_{V/K})$ for $V$  \eqref{E:AlbDat} is stable under \emph{separable} base change
  of field (\S\ref{S:BC}), and if $V$ admits a $K$-point 
   $v\in V(K)$, then 
    pointed Albanese
 data $(\operatorname{Alb}_{V/K}, \ a_{V/K,v})$ for $(V,v)$ \eqref{E:PAlbDat} is stable
  under \emph{separable} base change of field (\S\ref{S:BC}).
\end{teo}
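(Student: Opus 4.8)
The plan is to show that the base-change morphism $\beta_{V,L/K}$ of \eqref{E:beta} is an isomorphism; by \Cref{L:betavsbeta1} this is equivalent to the corresponding statement for the torsor map $\beta^1_{V,L/K}$, and the pointed case is notationally cleaner while the torsor case follows by the same argument together with \Cref{L:betavsbeta1}. Surjectivity of $\beta_{V,L/K}$ is immediate from the universal property: the base change $(a_{V/K})_L$ of the Albanese morphism has image generating $(\operatorname{Alb}^1_{V/K})_L$, so the factorization $(a_{V/K})_L = \beta^1_{V,L/K}\circ a_{V_L/L}$ forces the image of $\beta^1_{V,L/K}$ to be all of $(\operatorname{Alb}^1_{V/K})_L$, whence $\beta_{V,L/K}$ is surjective. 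The entire content is therefore to prove that $\beta_{V,L/K}$ is an isomorphism, equivalently that $\dim\operatorname{Alb}_{V_L/L}=\dim\operatorname{Alb}_{V/K}$ together with injectivity.

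I would first dispose of separable \emph{algebraic} extensions by Galois descent, that is, by the standard \Cref{L:albBCautLK}. For a general separable $L/K$ I would then reduce to the case where both fields are separably closed: since an isomorphism of abelian varieties may be checked after the faithfully flat extension $L^{\sep}/L$, and since \Cref{L:L/KsepCl} guarantees that $L^{\sep}/K^{\sep}$ is again separable, the algebraic cases applied to $K^{\sep}/K$ and $L^{\sep}/L$ let me replace $L/K$ by the separable extension $L^{\sep}/K^{\sep}$ of separably closed fields. In that situation \Cref{L:autLKsepclosed} supplies the identity $L^{\operatorname{Aut}(L/K)}=K$, which is what makes the subsequent descent arguments available.

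The heart of the proof is to remove the inseparable defect coming from de Jong's alterations. I would choose a finite purely inseparable extension $K'/K$ over which a de Jong alteration of a compactification of $V_{K'}$ becomes \emph{smooth} projective, so that \Cref{T:GrConBC-1} (Grothendieck--Conrad) applies to that model. Applying \Cref{L:albBCinsep} both over $K$ and over $L$ gives
\[
\operatorname{Alb}_{V/K}\iso \im_{K'/K}\operatorname{Alb}_{V_{K'}/K'},\qquad \operatorname{Alb}_{V_L/L}\iso \im_{K'L/L}\operatorname{Alb}_{V_{K'L}/K'L},
\]
and \Cref{L:imageabvar} identifies $(\operatorname{Alb}_{V/K})_L$ with $\im_{K'L/L}\bigl((\operatorname{Alb}_{V_{K'}/K'})_{K'L}\bigr)$. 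Under these identifications---with the compatibility of all the relevant morphisms furnished by the tower \eqref{E:tower} and \Cref{C:ImAbVarSt}---the map $\beta_{V,L/K}$ becomes $\im_{K'L/L}$ applied to the separable base-change map $\beta_{V_{K'},K'L/K'}$ over $K'$. Since the image functor preserves isomorphisms, this reduces the theorem over $K$ to the analogous statement over the ``good'' field $K'$, where a smooth projective alteration is available and Grothendieck--Conrad can be used.

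The main obstacle is precisely this interaction of two base changes of opposite flavour: de Jong only yields smoothness after a purely inseparable extension, so \Cref{T:GrConBC-1} cannot be invoked over $K$ itself, and it is the insensitivity of the $L/K$-image to separable extensions (\Cref{L:imageabvar,C:ImAbVarSt}, whose proofs rest on the rigidity and rational-map descent results \Cref{L:rigidity,L:RatMapLK}) that lets the inseparable discrepancy cancel and transports the isomorphism back down to $K$. The remaining difficulty over $K'$ is that the alteration $W\to V_{K'}$ is only generically finite, so $\operatorname{Alb}_{W/K'}$ may be strictly larger than $\operatorname{Alb}_{V_{K'}/K'}$; I would control the resulting surjection $\operatorname{Alb}_{W}\twoheadrightarrow\operatorname{Alb}_{V_{K'}}$ by observing that the condition for a morphism out of $W$ to factor through $V_{K'}$ is cut out over $K'$ and is stable under the flat base change $K'L/K'$, so that the kernel---and hence the Albanese dimension---is preserved, which combined with the surjectivity established above forces $\beta_{V_{K'},K'L/K'}$ to be an isomorphism.
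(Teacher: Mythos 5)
Your overall strategy is the paper's: de Jong alterations become smooth only after a finite purely inseparable extension $K'/K$, one transfers between $K$ and $K'$ via \Cref{L:albBCinsep} and the image formalism of \Cref{L:imageabvar,C:ImAbVarSt}, and Grothendieck--Conrad is invoked over the field where the alteration lives. But your execution has a structural flaw at the very first step: you ``dispose of'' the separable algebraic extensions $K^{\sep}/K$ and $L^{\sep}/L$ by citing \Cref{L:albBCautLK}, yet that lemma is not a bare Galois/Aut-descent statement --- it hypothesizes that $V$ receives a dominant map from a dense open subscheme of a smooth proper scheme \emph{over the base field}, and over $K$ itself no such diagram is available in positive characteristic (de Jong provides one only after a purely inseparable extension; this is the whole difficulty the theorem must overcome). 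The paper therefore runs the reductions in the opposite order: de Jong first, and the separable-closure tower (\Cref{L:albBCsmoothalteration}) is applied exclusively over fields where the alteration hypothesis holds. Your step could be repaired either by that reordering, or by deducing the case of $K^{\sep}/K$ from the finite separable case (\Cref{L:albBCfinsep}) by a limit argument; but as written it rests on a lemma whose hypotheses are not met.

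The second, more serious gap is your concluding argument over $K'$. You justify that the kernel of $\operatorname{Alb}_{W/K'}\twoheadrightarrow\operatorname{Alb}_{V_{K'}/K'}$ is preserved under base change because ``the condition for a morphism out of $W$ to factor through $V_{K'}$ is cut out over $K'$ and is stable under the flat base change $K'L/K'$.'' This argument never uses separability of $K'L/K'$; since purely inseparable extensions are equally faithfully flat, the identical reasoning would prove Albanese base change along purely inseparable extensions, which is false by the Raynaud example (\Cref{E:badbc}, \Cref{L:albBCinsep}). It also begs the question: the kernel of $\operatorname{Alb}_{W_{K'L}/K'L}\twoheadrightarrow\operatorname{Alb}_{V_{K'L}/K'L}$ is defined in terms of $\operatorname{Alb}_{V_{K'L}/K'L}$, which is exactly the object whose descent to $K'$ is at issue, so asserting it is ``cut out over $K'$'' assumes what is to be proved. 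In the paper, separability enters precisely here: one shows by a diagram chase with the universal property that this kernel is invariant under $\operatorname{Aut}(K'L/K')$, and then descends it using the fixed-field condition $(K'L)^{\operatorname{Aut}(K'L/K')}=K'$ supplied by \Cref{L:autLKsepclosed} (available because both fields are separably closed --- itself contingent on repairing your first reduction --- and false for inseparable extensions, where the automorphism group is trivial). Without this, or some equivalent use of separability, your final step proves a statement that is false.
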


 For \emph{finite} separable extensions, an easy argument shows:

 \begin{lem}
  \label{L:albBCfinsep}
  Let $V/K$ be a geometrically connected and geometrically reduced  
  scheme of finite type over a field $K$.
  If $L/K$ is finite and separable, then the base change morphisms $\beta^1_{V,L/K}$ \eqref{E:beta1} and
  $\beta_{V,L/K}$ \eqref{E:beta} are isomorphisms.
 \end{lem}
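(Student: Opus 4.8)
The plan is to exhibit $((\alb_{V/K})_L,\,(\alb^1_{V/K})_L,\,(a_{V/K})_L)$ as Albanese data for $V_L$, which forces $\beta^1_{V,L/K}$ to be an isomorphism; the statement for $\beta_{V,L/K}$ then follows from \Cref{L:betavsbeta1}. Since $L/K$ is finite separable, the base change functor $(-)_L$ from $K$-schemes to $L$-schemes admits the Weil restriction $\res_{L/K}$ as a right adjoint, with counit $\varepsilon_Y:(\res_{L/K}Y)_L\to Y$ for each $L$-scheme $Y$. The geometric input I would record first is that $\res_{L/K}$ carries abelian varieties (resp.\ torsors under abelian varieties) over $L$ to abelian varieties (resp.\ torsors under abelian varieties) over $K$: because $\spec L\to\spec K$ is finite \'etale, $\res_{L/K}$ preserves smoothness and properness and commutes with fibre products, while base change to $\bar K$ identifies $\res_{L/K}A$ and $\res_{L/K}P$ with the finite products $\prod_\sigma A^\sigma_{\bar K}$ and $\prod_\sigma P^\sigma_{\bar K}$ indexed by the $K$-embeddings $\sigma:L\hookrightarrow\bar K$; in particular both are geometrically connected, so $\res_{L/K}A$ is an abelian variety over $K$ and $\res_{L/K}P$ is a torsor under it.

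With this in hand, the factorization is pure adjunction. Given any torsor $P$ under an abelian variety $A/L$ and an $L$-morphism $f:V_L\to P$, the adjunction produces a $K$-morphism $\hat f:V\to\res_{L/K}P$ to a torsor under the abelian variety $\res_{L/K}A$. The universal property of the Albanese torsor $\alb^1_{V/K}$ then yields a unique $K$-morphism $\hat g:\alb^1_{V/K}\to\res_{L/K}P$ with $\hat g\circ a_{V/K}=\hat f$, equivariant over a unique homomorphism $\alb_{V/K}\to\res_{L/K}A$ by \Cref{R:Pic0T}. Setting $g:=\varepsilon_P\circ(\hat g)_L:(\alb^1_{V/K})_L\to P$ and using the triangle identity $f=\varepsilon_P\circ(\hat f)_L$, I get $g\circ(a_{V/K})_L=f$; this $g$ is a morphism of torsors equivariant over the induced map $(\alb_{V/K})_L\to A$. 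Thus $(\alb^1_{V/K})_L$ is initial.

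For uniqueness of $g$ I would transport the question back through the adjunction: any $L$-morphism $(\alb^1_{V/K})_L\to P$ compatible with $(a_{V/K})_L$ corresponds bijectively, via $\operatorname{Hom}_L((\alb^1_{V/K})_L,P)\cong\operatorname{Hom}_K(\alb^1_{V/K},\res_{L/K}P)$, to a $K$-morphism compatible with $a_{V/K}$, and the latter is unique by the universal property of $\alb^1_{V/K}$. The pointed case is identical, running the same argument with abelian varieties in place of torsors and with the pointed universal property of \eqref{E:PAlbDat}; here one checks that $\varepsilon_A$ is a homomorphism and that $\hat f$ is pointed, so that $g$ comes out a homomorphism. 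I expect the only genuine work---hence the main obstacle---to be the bookkeeping in the first paragraph: verifying that $\res_{L/K}$ really does send abelian varieties and their torsors to abelian varieties and torsors over $K$ (smoothness, properness, geometric connectedness, and the torsor axioms after base change to $\bar K$), together with checking that the counit $\varepsilon$ is equivariant for the group structures, so that every map produced is a morphism of torsors (resp.\ a homomorphism) and not merely of schemes.
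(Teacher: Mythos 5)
Your proposal is correct and takes essentially the same approach as the paper: both reduce to $\beta^1_{V,L/K}$ via \Cref{L:betavsbeta1} and then use the Weil restriction adjunction $\operatorname{Hom}_K(V,\res_{L/K}(T))=\operatorname{Hom}_L(V_L,T)$, together with the fact that $\res_{L/K}$ of an abelian variety (resp.\ torsor) is again an abelian variety (resp.\ torsor), to transfer the universal property of $\alb^1_{V/K}$ to $(\alb^1_{V/K})_L$. The only difference is one of explicitness: you spell out the counit/triangle identities and the uniqueness of the factoring morphism, which the paper leaves implicit, and you sketch the $\prod_\sigma A^\sigma_{\bar K}$ verification where the paper cites \cite[\S7.6, Prop.~5]{BLR}.
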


 \begin{proof} 
  By virtue of \Cref{L:betavsbeta1}, it suffices to show that $\beta^1_{V,L/K}$ is an isomorphism, and therefore, 
  by the universal property, it suffices to show that if $A/L$ is any
  abelian variety, $T$ is a torsor under $A$, and $\alpha: V_L \to T$ is a
  morphism, then $\alpha$ factors through $a_L:V_L\to (\alb^1_{V/K})_L$.

  Since $L/K$ is finite and separable, the Weil restriction $\res_{L/K}(A)$ is an
  abelian
  variety  (e.g., \cite[\S 1]{milnearithmeticabvars}) 
      and
    $\res_{L/K}(T)$ is a torsor under $\res_{L/K}(A)$. 
    Since $\operatorname{Hom}_K(V,\res_{L/K}(T))=\operatorname{Hom}_L(V_L,T)$ (\emph{i.e.}, the adjoint property of the Weil restriction, \emph{e.g.}, \cite[p.~191, Lem.~1]{BLR}),
  there is associated to $\alpha$ a $K$-morphism $V \to \res_{L/K}(T)$.  By the universal property of
  $\alb_{V/K}$, this factors over $K$ as
  \[
  \xymatrix{
   V \ar[r] \ar[rd]& \alb^1_{V/K} \ar[d] \\
   &\res_{L/K}(T)
  }
  \]
  Again by the adjoint property of $\res_{L/K}$, this induces a diagram over $L$
  \[
  \xymatrix{
   V_L \ar[r] \ar[rd]_\alpha& (\alb^1_{V/K})_L \ar[d] \\
   &T
  }
  \]
  and so $(\alb^1_{V/K})_L$ is the universal torsor receiving a map from
  $V_L$.   
 \end{proof}

For clarity, recall that a normal Noetherian scheme is irreducible if and only if it is connected.
 In particular, if a scheme over $K$ is smooth, then it is geometrically reduced and geometrically connected if and only if it is geometrically integral. 

 \begin{lem}
  \label{L:albopen}
  Let $V/K$ be a smooth geometrically integral
    scheme 
  over a field $K$.
  If $\iota: U\hookrightarrow V$ is an open immersion, then
  the universal morphism indicated with the dashed arrow in the diagram below:
 $$
 \xymatrix{
 U\ar@{^(->}[d]_\iota \ar[r]& \operatorname{Alb}^1_{U/K} \ar@{-->}[d]^\cong\\
 V\ar[r]& \operatorname{Alb}^1_{V/K}
 }
 $$
  is an isomorphism,  equivariant with respect to a canonical
   isomorphism
   $$\alb_{U/K} \iso \alb_{V/K}.$$ In particular, pre-composing with the inclusion $U\hookrightarrow V$ converts Albanese data (resp.~pointed Albanese data) for $V$ into Albanese (resp.~pointed Albanese data) for $U$.  
 \end{lem}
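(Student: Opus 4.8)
The plan is to show that the composition $a_{V/K}\circ\iota\colon U\to \operatorname{Alb}^1_{V/K}$ is \emph{itself} Albanese data for $U$; by the uniqueness of initial objects this will force the canonical comparison morphism $\operatorname{Alb}^1_{U/K}\to\operatorname{Alb}^1_{V/K}$ to be an isomorphism. Here the comparison morphism exists by the universal property of $\operatorname{Alb}^1_{U/K}$ applied to $U\hookrightarrow V\to\operatorname{Alb}^1_{V/K}$, noting that $U$, being a nonempty open of the smooth geometrically integral scheme $V$, is again smooth and geometrically integral, hence admits Albanese data by \Cref{T:Serre-Alb}. The accompanying isomorphism $\alb_{U/K}\iso\alb_{V/K}$, together with equivariance, then comes for free from \Cref{R:Pic0T}, and the concluding ``in particular'' assertion is immediate.

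To verify the universal property, the key geometric input I first need is that every $K$-morphism $f\colon U\to P$ to a torsor $P$ under an abelian variety $A/K$ extends to a $K$-morphism $V\to P$; this is the only substantive point. I would reduce to the geometric situation by base change to $\bar K$: since $P_{\bar K}$ is a torsor under $A_{\bar K}$ over an algebraically closed field it is trivial, so $P_{\bar K}\iso A_{\bar K}$, and $f_{\bar K}$ becomes a rational map from the smooth integral variety $V_{\bar K}$ (using that $V$ is smooth and geometrically integral, and that $U$ is dense in $V$) to an abelian variety. By the classical theorem that a rational map from a smooth variety to an abelian variety is everywhere defined (Weil; see \emph{e.g.} \cite[\S 8.4]{BLR}), $f_{\bar K}$ extends to a morphism $V_{\bar K}\to A_{\bar K}\iso P_{\bar K}$. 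Then \Cref{L:RatMapLK}, applied with $L=\bar K$ and the separated finite-type target $P$ (which is proper, hence separated), descends this to the desired extension $\tilde f\colon V\to P$ over $K$.

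With extension in hand, the universal property for $U$ is formal. Given $f\colon U\to P$, extend it to $\tilde f\colon V\to P$ and factor $\tilde f=g\circ a_{V/K}$ through the universal property of $\operatorname{Alb}^1_{V/K}$; restricting along $\iota$ yields $f=g\circ(a_{V/K}\circ\iota)$, proving existence of the factorization. For uniqueness, if $g_1,g_2\colon\operatorname{Alb}^1_{V/K}\to P$ both satisfy $g_i\circ(a_{V/K}\circ\iota)=f$, then $g_1\circ a_{V/K}$ and $g_2\circ a_{V/K}$ are two morphisms $V\to P$ agreeing on the dense open $U$; since $V$ is reduced and $P$ is separated they agree on all of $V$, whence the uniqueness clause in the universal property of $\operatorname{Alb}^1_{V/K}$ forces $g_1=g_2$. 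This shows $a_{V/K}\circ\iota$ is initial, completing the argument.

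I expect the extension step (Weil's theorem combined with the descent of \Cref{L:RatMapLK}) to be the main obstacle; everything else reduces to manipulation of universal properties and the standard fact that a morphism from a reduced scheme to a separated scheme is determined by its restriction to a dense open.
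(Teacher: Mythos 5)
Your proposal is correct and follows essentially the same route as the paper: reduce to showing that every $K$-morphism from $U$ to a torsor under an abelian variety extends to $V$, handle the case of an abelian variety target via Weil's extension theorem \cite[\S 8.4, Cor.~6, p.234]{BLR}, and then descend the extension from a trivializing field extension using \Cref{L:RatMapLK}. The only cosmetic differences are that you trivialize the torsor over $\bar K$ whereas the paper uses an arbitrary extension over which the torsor acquires a point, and that you spell out the universal-property bookkeeping (existence and uniqueness of the factorization) that the paper leaves implicit.
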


 \begin{proof} 
 It suffices to show that, if $f:U \to T$ is a morphism
  to a torsor under an abelian variety, then $f$ extends to a morphism
  $\tilde f: V \to T$.  If $T$ is an abelian variety,
   this is a special
  case of \cite[\S 8.4, Cor.~6, p.234]{BLR}.  The general case then follows from this since if  $f_L:U_L\to T_L$ extends to a morphism $V_L\to T_L$ for some field extension $L/K$, then $f:U\to T$ extends to a morphism $\tilde f: V\to T$  (\Cref{L:RatMapLK}), and so one reduces to the previous case by base change to a field $L/K$ over which $T_L$ admits an $L$-point.
              \end{proof}

 The next two lemmas establish that for a variety $V$ that admits a smooth
 alteration, formation of the Albanese torsor commutes with separable base
 change:

 \begin{lem}
  \label{L:albBCautLK}
  Let $V/K$ be a geometrically connected and geometrically reduced 
   scheme  of finite type over a field $K$.  Suppose that there is a
  diagram
  \[
  \xymatrix{
   U \ar@{^(->}[r]^\iota \ar[d]^\pi & X \\
   V
  }
  \]
  of $K$-schemes  with $\pi$ dominant, $\iota$ a dense open
   immersion, and $X$ a smooth proper  scheme over $K$ each connected component of which
       is geometrically integral.  Let $L/K$ be any field
  extension such that $L^{\aut(L/K)} = K$.  Then $\beta_{V,L/K}$ and
  $\beta^1_{V,L/K}$ are isomorphisms.
 \end{lem}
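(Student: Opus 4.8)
The plan is to reduce, via \Cref{L:betavsbeta1}, to proving that $\beta^1_{V,L/K}$ is an isomorphism, equivalently that the base-changed datum $((\alb_{V/K})_L,(\alb^1_{V/K})_L,(a_{V/K})_L)$ is an Albanese datum for $V_L$. First I would exploit the smooth alteration. Since $X$ is smooth and proper with geometrically integral components, I may pass to a connected component dominating $V$ and assume $X$ is geometrically integral; then \Cref{L:albopen} identifies $\alb^1_{U/K}$ with $\alb^1_{X/K}$, and the Grothendieck--Conrad theorem (\Cref{T:GrConBC-1}) shows that the Albanese of $X$, hence of $U$, is stable under arbitrary base change, so $\beta_{U,L/K}\colon \alb_{U_L/L}\xrightarrow{\sim}(\alb_{U/K})_L$ is an isomorphism. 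Because $\pi$ is dominant, the image of $U$ under $a_{V/K}\circ\pi$ generates the Albanese torsor of $V$, so the induced map $\pi_*\colon \alb_{U/K}\to\alb_{V/K}$ is surjective; the same argument after base change gives a surjection $\pi_{L,*}\colon \alb_{U_L/L}\to\alb_{V_L/L}$.

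Next I would assemble the functoriality square $\beta_{V,L/K}\circ\pi_{L,*}=(\pi_*)_L\circ\beta_{U,L/K}$ and identify $\alb_{U_L/L}$ with $(\alb_{U/K})_L$ through the isomorphism $\beta_{U,L/K}$. As $\beta_{U,L/K}$ is an isomorphism and $(\pi_*)_L$ is surjective, $\beta_{V,L/K}$ is surjective (this recovers the general fact that the base-change map is surjective). Writing $N:=\ker(\pi_*)\subseteq\alb_{U/K}$, the square gives $\ker(\pi_{L,*})\subseteq\ker((\pi_*)_L)=(N)_L$, and, since $\pi_{L,*}$ is surjective, $\ker(\beta_{V,L/K})=\pi_{L,*}\big((N)_L\big)$. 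Hence $\beta_{V,L/K}$ is an isomorphism if and only if $\ker(\pi_{L,*})=(N)_L$; that is, the whole problem reduces to showing that the relations imposed on $\alb_{U_L/L}$ by passing to $\alb_{V_L/L}$ do not grow after base change, i.e. that $\ker(\pi_{L,*})$ descends to the $K$-subgroup $N$. I note that this is exactly the inclusion that fails in the Raynaud example (\Cref{E:badbc}), where $L/K$ is inseparable and $L^{\aut(L/K)}\neq K$, which is a reassuring sign that the hypothesis $L^{\aut(L/K)}=K$ must enter precisely here.

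The main step, and the place I expect the real difficulty, is the descent. Since $\pi\colon U\to V$ is a $K$-morphism, functoriality of the Albanese makes $\pi_{L,*}$ equivariant for the semilinear action of $\aut(L/K)$ on $\alb_{U_L/L}=(\alb_{U/K})_L$ and on $\alb_{V_L/L}$; consequently $\ker(\pi_{L,*})$ is stable under $\aut(L/K)$. The hypothesis $L^{\aut(L/K)}=K$ is then used to conclude that this $\aut(L/K)$-stable closed subgroup of $(\alb_{U/K})_L$ is defined over $K$, necessarily as $(N_0)_L$ for some $N_0\subseteq\alb_{U/K}$; combined with $\ker(\pi_{L,*})\subseteq(N)_L$ and a matching of the resulting quotients this forces $\ker(\pi_{L,*})=(N)_L$. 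Equivalently, and perhaps more robustly, I would show that the entire Albanese datum of $V_L$ descends to $K$: the quotient $\alb_{U_L/L}/\ker(\pi_{L,*})\cong\alb_{V_L/L}$, its torsor, and the (equivariant) Albanese morphism $a_{V_L/L}$ all descend, and then the descended $K$-datum is initial for $V$ by comparing the universal property over $K$ with the one over $L$ (any $K$-morphism $V\to T$ base-changes, factors over $L$ by the Albanese property of $V_L$, and this factorization descends by uniqueness together with $\aut(L/K)$-invariance). Identifying the descended datum with $(\alb_{V/K},\alb^1_{V/K},a_{V/K})$ via \Cref{T:Serre-Alb} then yields that $\beta^1_{V,L/K}$ is an isomorphism. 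The delicate point throughout is the legitimacy of descent along a possibly transcendental, non-normal extension $L/K$ for which only $L^{\aut(L/K)}=K$ is known: this is where care is required --- reducing to a finitely generated subextension over which the subgroup (resp.\ the morphism) is defined, and using $\aut(L/K)$-invariance to pin down its field of definition as $K$ --- and it is exactly the crux that the hypothesis on the fixed field is designed to supply.
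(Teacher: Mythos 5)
Your overall strategy --- reduce via \Cref{L:betavsbeta1}, use \Cref{L:albopen} and Grothendieck--Conrad (\Cref{T:GrConBC-1}) to get base change for the smooth proper piece, prove surjectivity onto $\alb_{V/K}$, and then descend the kernel using $\aut(L/K)$-invariance together with $L^{\aut(L/K)}=K$ --- is the paper's strategy. But there is a genuine gap at your very first reduction: you ``pass to a connected component dominating $V$ and assume $X$ is geometrically integral.'' The lemma only assumes $V$ is geometrically \emph{connected} and geometrically reduced, not irreducible; when $V$ is reducible, no single connected component of $U$ (equivalently of $X$) can dominate $V$, since each such component is irreducible and hence its image closure lies in a single irreducible component of $V$. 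This case genuinely occurs in the intended application: in the proof of \Cref{T:mainAbody}, $U$ is a disjoint union of alterations of affine opens of the several irreducible components of $V$. With only one component retained, your map $\pi_*\colon\alb_{U/K}\to\alb_{V/K}$ need not be surjective, and surjectivity is what the rest of your argument rests on. Note you also cannot simply keep all of $U$, since a disconnected $U$ admits no Albanese at all (\Cref{C:GCX}); the paper's device is to work with the product homomorphism $\delta\colon\prod_i\alb_{U_i/K}\to\alb_{V/K}$ over all connected components $U_i$, whose surjectivity is proved after a finite extension making each $U_i$ pointed (so that ``translates of the images generate'' makes sense). Your argument survives once $\alb_{U/K}$ is replaced by this product throughout.

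A second, softer point: the passage from ``$\ker(\pi_{L,*})$ is $\aut(L/K)$-stable, hence equal to $(N_0)_L$ for some $K$-subgroup $N_0$'' to ``$N_0=N$'' is the real crux, and ``a matching of the resulting quotients'' does not yet supply it: a priori the functoriality square only gives $N_0\subseteq N$, and the reverse inclusion is equivalent to the statement being proved. Your ``more robust'' alternative --- descend the entire Albanese datum of $V_L$ (abelian variety, torsor, and Albanese morphism, e.g.\ by descending graphs as closed subschemes) and then compare the universal properties over $K$ and over $L$ --- is a legitimate way to close this, and it is in the spirit of the paper's proof, which establishes the invariance by an explicit $\sigma$-twist diagram chase using the universal property of the Albanese and then identifies the descended kernel with $\ker\delta$ before concluding via a diagram of quotients. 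So this second point is a place where detail must be supplied rather than a wrong turn; the component issue in the first paragraph, by contrast, is an error that must be repaired along the lines of the paper's product construction.
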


 \begin{proof}
                 Let us write $\sqcup_i \iota_i : \bigsqcup_i U_i \to \bigsqcup_i X_i$ for $\iota : U \hookrightarrow X$, with the $X_i$ being the connected components of $X$.
  Note that, by Lemma \ref{L:albopen} and
  Theorem \ref{T:GrConBC-1}, $\beta^1_{U_i,L/K}$ and $\beta_{U_i,L/K}$ are
  isomorphisms for all $i$.
   By the universal property, each Albanese morphism $a_i : U_i \to \alb^1_{U_i/K}$ induces a diagram
  \[
  \xymatrix@R=1.5em{
   U_i \ar[r]^{a_i \ \ } \ar[d]_{\pi_i} & \alb^1_{U_i/K}\ar[d]^{\delta^1_i}\\
   V \ar[r]^{a \ \ }&  \alb^1_{V/K}
  }
  \]
  and each $\delta^1_i$ is equivariant with respect to the induced
  morphism $\delta_i: \alb_{U_i/K} \to \alb_{V/K}$ of abelian
  varieties.  
  Let $\delta : \prod_i \alb_{U_i/K} \to \alb_{V/K}$ be the homomorphism induced by the $\delta_i$.
  We claim that $\delta$ is surjective and that, for any field extension $L/K$ such that $L^{\aut(L/K)} = K$, we have that 
   $$\ker  \big(\delta_L : \prod_i \alb_{(U_i)_L/L} \to \alb_{V_L/L}\big)$$
    is invariant under $\operatorname{Aut}(L/K)$, so that $\ker \delta_L$ descends to $\ker \delta$.  
      
  The surjectivity of $\delta$ can be seen as follows. Choose a finite field extension $M/L$ such that each $(U_i)_M$ acquires an $M$-point. Since the image of $V_M$ in its Albanese variety $\alb_{V_M/M}$ generates $\alb_{V_M/M}$ and since the disjoint union $\bigsqcup_i(U_i)_M$ dominates $V_M$, we see that $\alb_{V_M/M}$ is generated by certain translates of the images of the induced homomorphisms $\alb_{(U_i)_M/M} \to \alb_{V_M/M}$ and it ensues that $\delta_M$, and hence $\delta$, is surjective (as surjective morphisms satisfy fpqc descent; \emph{e.g.}, \cite[p.584]{GW20}).
  
For the $\operatorname{Aut}(L/K)$-invariance of $\ker \delta_L$, we argue as follows.  Let $\sigma\in \operatorname{Aut}(L/K)$, and for an $L$-scheme $Y$, denote by  $Y^\sigma$ the pull-back of $Y$ along $\sigma:\operatorname{Spec}L\to \operatorname{Spec}L$.  
   We want to show there is a canonical $L$-isomorphism $(\ker \delta_L)^\sigma= \ker \delta_L$. For this, consider the diagram
$$
\xymatrix@C=3em{
V_L\ar[r]^<>(0.5){a_L} \ar@{=}[d]& \operatorname{Alb}_{V_L/L} \ar@{-->}[d]& \prod_i\operatorname{Alb}_{(U_i)_L/L}  \ar@{=}[d] \ar@{->>}[l]_<>(0.5){\delta_L}& \ker \delta_L \ar[l]\\
(V_L)^\sigma\ar[r]^<>(0.5){(a_L)^\sigma} & (\operatorname{Alb}_{V_L/L})^\sigma & \prod_i\left(\operatorname{Alb}_{(U_i)_L/L}\right)^\sigma \ar@{->>}[l]_<>(0.5){(\delta_L)^\sigma}& (\ker \delta_L)^\sigma \ar[l]
}
$$
where the dashed arrow is induced by the universal property of the
Albanese. One concludes from a diagram chase that there is a
scheme-theoretic inclusion
$$
\ker \delta_L\subseteq (\ker \delta_L)^\sigma.
$$
Applying the same argument to $\sigma^{-1}$, we see that $\ker \delta_L\subseteq (\ker \delta_L)^{\sigma^{-1}}$, and then applying $\sigma$ to both sides, we have
$
(\ker \delta_L)^\sigma \subseteq ((\ker \delta_L)^{\sigma^{-1}})^\sigma = \ker \delta_L$, so that $(\ker \delta_L)^\sigma =\ker \delta$, as claimed.

  Now, since we have established that $(\ker \delta)_L=\ker \delta_L$, 
   we have, for any field extension $L/K$ such that $L^{\aut(L/K)} = K$, a commutative diagram:
$$
\xymatrix{
\alb_{V_L/L} \ar[d]_{\beta_{V,L/K}}&\big(\prod_i \alb_{(U_i)_L/L}\big) / \ker \delta_L \ar[l]_<>(0.5)\iso  \ar[d]^{\prod_i \beta_{U_i,L/K}}_\iso\\
(\operatorname{Alb}_{V_K/K})_L& \big(\prod_i \alb_{U_i/K}\big)_L / (\ker\delta)_L \ar[l]_<>(0.5)\iso
}
$$  
    showing that    $\beta_{V,L/K}$ is an isomorphism. By \Cref{L:betavsbeta1}, $\beta^1_{V,L/K}$ is then also an
  isomorphism.
                    \end{proof}

 \begin{lem}
  \label{L:albBCsmoothalteration}
  Suppose $U$, $V$, and $X$ are as in Lemma \ref{L:albBCautLK}.  If $L/K$
  is any separable extension, then $\beta_{V,L/K}$ is an isomorphism.
 \end{lem}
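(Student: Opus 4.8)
The plan is to bootstrap from Lemma \ref{L:albBCautLK}, which handles extensions satisfying $L^{\aut(L/K)} = K$, up to an arbitrary separable extension, by exploiting the fact (Lemma \ref{L:autLKsepclosed}) that for an extension of \emph{separably closed} fields $\Omega/k$, separability is equivalent to $\Omega^{\aut(\Omega/k)} = k$. The essential device is the transitivity of the base change morphisms: for a tower $L/K'/K$ (with $V_{K'}$ again geometrically connected and geometrically reduced, so its Albanese exists) one has the cocycle identity $\beta_{V,L/K} = (\beta_{V,K'/K})_L \circ \beta_{V_{K'},L/K'}$, which follows from the uniqueness clause in the universal property of the Albanese.

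First I would record that formation of the Albanese is compatible with separable \emph{algebraic} extensions, not merely finite ones. If $E/K$ is separable algebraic, write $E = \varinjlim_i E_i$ as the filtered colimit of its finite subextensions; by Lemma \ref{L:albBCfinsep} each $\beta_{V,E_i/K}$ is an isomorphism, and a standard limit argument promotes this to $E$. Concretely, any morphism $V_E \to T$ to a torsor under an abelian variety over $E$, together with the target torsor and its structure group, descends to some finite level $E_i$, where it factors uniquely through $(\alb^1_{V/K})_{E_i}$ by Lemma \ref{L:albBCfinsep}; base changing back shows $(\alb_{V/K})_E$ is Albanese data for $V_E$, i.e. $\beta_{V,E/K}$ is an isomorphism. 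I would apply this both to $E = K^{\sep}$ over $K$ and to $E = L^{\sep}$ over $L$.

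Now fix the separable extension $L/K$ and choose a separable closure $L^{\sep}$ of $L$, with $K^{\sep}$ the separable closure of $K$ inside it, so that $K \subseteq K^{\sep} \subseteq L^{\sep}$ and $K \subseteq L \subseteq L^{\sep}$ with $K^{\sep}$ and $L^{\sep}$ both separably closed. By Lemma \ref{L:L/KsepCl}, $L^{\sep}/K^{\sep}$ is separable, so Lemma \ref{L:autLKsepclosed} gives $(L^{\sep})^{\aut(L^{\sep}/K^{\sep})} = K^{\sep}$. Since the diagram $U \hookrightarrow X$, $U \to V$ base changes to a diagram of the same shape over $K^{\sep}$ (smoothness, properness, density of the open immersion, dominance, and geometric integrality of the components of $X$ are all preserved), Lemma \ref{L:albBCautLK} applies over the base $K^{\sep}$ and shows $\beta_{V_{K^{\sep}}, L^{\sep}/K^{\sep}}$ is an isomorphism. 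Applying the cocycle identity to the tower $L^{\sep}/K^{\sep}/K$ and using that $\beta_{V,K^{\sep}/K}$ is an isomorphism (previous paragraph), I conclude that $\beta_{V,L^{\sep}/K}$ is an isomorphism.

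Finally I would descend back to $L$. Applying the cocycle identity instead to the tower $L^{\sep}/L/K$ gives $\beta_{V,L^{\sep}/K} = (\beta_{V,L/K})_{L^{\sep}} \circ \beta_{V_L,L^{\sep}/L}$; since the left-hand side is an isomorphism and $\beta_{V_L,L^{\sep}/L}$ is an isomorphism (the algebraic separable case applied to $L^{\sep}/L$), the factor $(\beta_{V,L/K})_{L^{\sep}}$ is an isomorphism, whence $\beta_{V,L/K}$ itself is an isomorphism by faithfully flat descent of isomorphisms along $L^{\sep}/L$. The step I expect to be the main obstacle is the passage from finite to infinite algebraic separable extensions, i.e. establishing that the Albanese commutes with the filtered colimit of fields defining $K^{\sep}$ (and $L^{\sep}$); this is where the finiteness hypotheses on $V$ and on the test data must be invoked, and care is needed to descend not only the test morphism but also its target torsor and the abelian variety under it to a finite level before citing Lemma \ref{L:albBCfinsep}.
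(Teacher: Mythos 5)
Your proof is correct, and its skeleton is that of the paper: the same square of fields $K \subseteq K^{\sep} \subseteq L^{\sep}$ and $K \subseteq L \subseteq L^{\sep}$, the same use of \Cref{L:L/KsepCl} and \Cref{L:autLKsepclosed} to make \Cref{L:albBCautLK} applicable to $L^{\sep}/K^{\sep}$, the same transitivity (cocycle) identities for the base change maps, and the same final step of checking that $(\beta_{V,L/K})_{L^{\sep}}$ is an isomorphism and descending. The one genuine divergence is your treatment of the two extensions $K^{\sep}/K$ and $L^{\sep}/L$: you handle them by a spreading-out/limit argument from the finite case (\Cref{L:albBCfinsep}), and you flag this as the main obstacle. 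The paper avoids this entirely: since $(K^{\sep})^{\aut(K^{\sep}/K)} = K$ and $(L^{\sep})^{\aut(L^{\sep}/L)} = L$ (separable closures are Galois in this sense), \Cref{L:albBCautLK} applies to these two extensions directly -- this is exactly why that lemma is formulated for arbitrary extensions satisfying $L^{\aut(L/K)} = K$ rather than only for finite separable ones, so the ``obstacle'' you identify is self-imposed. That said, your limit argument is sound (the target torsor, its structure group, and the test morphism all descend to a finite subextension by standard finite-presentation arguments, and uniqueness of the factorization is checked at a finite level as well), and it yields a small bonus the paper's route does not give at this stage: compatibility of the Albanese with arbitrary separable \emph{algebraic} extensions without any smooth-alteration hypothesis on $V$ -- though this also follows a posteriori from \Cref{T:mainAbody}.
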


 \begin{proof}
  Let $K^{\sep}$ and $L^{\sep}$ be separable closures of, respectively, $K$ and
  $L$, and consider the diagram of separable (thanks to Lemma \ref{L:L/KsepCl})
  field extensions
  \[
  \xymatrix@R=.5em{
   L^{\sep} \ar@{-}[dd] \ar@{-}[dr] \\
   & L \ar@{--}[dd]\\
   K^{\sep}\ar@{-}[dr]&\\
   &K
  }
  \]
  As we have $(K^{\sep})^{\operatorname{Aut}(K^{\sep}/K)} = K$,
  $(L^{\sep})^{\operatorname{Aut}(L^{\sep}/L)} = L$, and by
  Lemma \ref{L:autLKsepclosed} we also have the identification 
  $(L^{\sep})^{\operatorname{Aut}(L^{\sep}/K^{\sep})} = K^{\sep}$, we can
  apply Lemma \ref{L:albBCautLK} to all three extensions with solid segments  in the diagram above.
  Together with  the
  universal property of the Albanese morphism, we therefore obtain the diagram:
  $$
  \xymatrix{\operatorname{Alb}_{V_{L^\sep}/L^{\sep}} \ar[r]_\cong^{\beta_{V_L, L^\sep/L}}\ar[d]^\cong_{\beta_{{V}, L^\sep/K^\sep}} &
   (\operatorname{Alb}_{V_L/L})_{L^{\sep}} \ar[dd]^{(\beta_{V, L/K})_{L^\sep}} \\
   (\operatorname{Alb}_{V_{K^\sep}/K^\sep})_{L^{\sep}} \ar[d]^{\cong}_{(\beta_{V,K^\sep/K})_{L^\sep}}&
   \\
     ((\operatorname{Alb}_{V/K})_{K^{\sep}})_{L^{\sep}} \ar@{=}[r] & ((\operatorname{Alb}_{V/K})_L)_{L^{\sep}}
   }
   $$ 
   It follows that
  $\beta_{V, L/K}$  becomes an isomorphism after base-change to $L^{\sep}$, and
  hence that it
  is an isomorphism.
 \end{proof}

In the case of a purely inseparable extension $L/K$, it turns out that the $L/K$-image explains the Raynaud Example \ref{E:badbc}:

 \begin{teo}[{Theorem \ref{T:insep}}]
  \label{L:albBCinsep}
  Let $V/K$ be a geometrically connected and geometrically reduced 
  scheme of finite type over a field $K$.  Suppose $L/K$ is a       purely inseparable extension.
Then there is a commutative diagram
      \begin{equation}\label{E:TinsepE2}
  \xymatrix{
\operatorname{Alb}_{V_L/L}  \ar[r]^<>(0.5){\beta_{V,L/K}}   \ar[rd]_<>(0.5){\lambda}& (\operatorname{Alb}_{V/K})_L \ar[d]_{\iso}\\
 & (\im_{L/K}\alb_{V_L/L})_L,
}
\end{equation}  
induced by an isomorphism $  \alb_{V/K} \iso \im_{L/K}\alb_{V_L/L}$, where $\lambda$ is the universal morphism in the definition of the $L/K$-image \eqref{E:lambda}.

 If $V$ admits a $K$-point $v$, then  the composition of pointed $L$-morphisms
  \begin{equation}\label{E:Tinsep2}
\xymatrix{
(V_L,v_L) \ar[r]^<>(0.5){a_{V_L/L}}& (\operatorname{Alb}_{V_L/L},\mathbf 0)  \ar[r]^<>(0.5){\lambda}& ((\im_{L/K}\operatorname{Alb}_{V_L/L})_L,\mathbf 0)
}
\end{equation}
is initial for pointed $L$-morphisms $(V_L,v_L)\to (A_L,\mathbf 0 )$, where $A/K$ is an abelian variety over $K$, 
and  \eqref{E:Tinsep2}
descends to $K$ to give a  pointed $K$-morphism
\begin{equation}\label{E:Tinsep3}
\xymatrix@C=4em{
(V,v) \ar[r]^<>(0.5){\ubar {a_{V_L/L}}}& (\im_{L/K}\operatorname{Alb}_{V_L/L},\mathbf 0)
}
\end{equation}
providing Albanese data for $(V,v)$; \emph{i.e.}, $(\im_{L/K}\operatorname{Alb}_{V_L/L},\  \ubar {a_{V_L/L}})$ is pointed Albanese data for $(V,v)$.  
                            \end{teo}

 \begin{proof}  
 We wish to establish   \eqref{E:TinsepE2}, first.  To this end, let $V = \bigcup_{i=1}^n U_i$ be an affine open cover.
  Since $V$ is geometrically reduced, each $U_i$ admits a point over some finite
  separable extension $M/K$, which can be chosen to be independent of $i$.  By Lemmas \ref{L:imageabvar} and
  \ref{L:albBCfinsep}, it suffices to verify the lemma after base change
  to $M$.  Thus, we may and do assume in particular that $V$ admits a $K$-point, and
  consequently  that the Albanese torsor and the Albanese abelian variety
  coincide.  Moreover, each of the $U_i$ is separated, being affine, and geometrically reduced, being contained in $V$.  Moreover, we can take the $U_i$ to be connected, and then, since we are allowed to take finite separable base changes, we may take the $U_i$ to be geometrically connected, as well.  In other words, 
  we may assume that  $V$ admits an open cover $\{(U_i,u_i)\}$ by separated  (geometrically) connected
  and geometrically reduced schemes $U_i$ of finite type over $K$, with each irreducible component of the $U_i$ being geometrically integral and admitting a smooth $K$-point $u_i$.  Moreover, 
  we have reduced to proving, under these hypotheses, the second assertion of the lemma, namely that  \eqref{E:Tinsep2} descends to  \eqref{E:Tinsep3}, and that this gives pointed Albanese data.  

Let
  $a$ be the composite map \eqref{E:Tinsep2}
  \[
  \xymatrix{
   a: V_L \ar[r]^<>(0.5){a_{V_L/L}} & \alb_{V_L/L} \ar[r]^<>(0.5)\lambda & (\im_{L/K}\alb_{V_L/L})_L.
  }
  \]
  From \Cref{L:ImAbVarSt}, 
  it is initial for pointed maps from $V_L$ to the base change to $L$ of abelian varieties
  defined over $K$ (establishing one of the claims of \Cref{L:albBCinsep}), 
  and  descends to a pointed $K$-
  morphism $\ubar a :V \to \im_{L/K}\alb_{V_L/L}$ over $K$. 
  We claim that this implies that $\ubar a:V\to   \im_{L/K}\alb_{V_L/L}$ is a pointed Albanese.  
   Indeed, given a pointed morphism $V\to A$ to an abelian variety $A$, we obtain a unique morphism making the following diagram commute:
$$
\xymatrix{
V_L\ar[r]^<>(0.5){a} \ar[rd]& (\im_{L/K}\alb_{V_L/L})_L \ar@{-->}[d]\\
& A_L}
$$
Then, from Chow rigidity \cite[Thm.~3.19]{conradtrace}, one has a unique morphism making the following diagram commute:
$$
\xymatrix{
V\ar[r]^<>(0.5){\ubar a } \ar[rd]& \im_{L/K}\alb_{V_L/L}\ar@{-->}[d]\\
& A}
$$  
showing that $\ubar a: V\to \im_{L/K}\alb_{V_L/L}$ is the pointed Albanese.  
 \end{proof}

 Finally, we can prove our main result.

 \begin{proof}[Proof of \Cref{T:mainAbody}]
 	By Lemma~\ref{L:albBCfinsep}, after possibly base-changing along a finite separable field extension,  we may and do assume that the irreducible components of $V$ are geometrically integral.  
   We will identify a finite purely \emph{inseparable} extension $L/K$ such that $V$ admits a smooth alteration relative to $L$, and chase Albanese varieties along the diagram of fields \eqref{E:tower}.

  Let $V_i$ be the irreducible components of $V$ and for each $i$ choose an open affine (and so separated) subset $V_i'\subseteq V_i$.
  Using Nagata compactification \cite{conradNagata}, embed $V'_i \hookrightarrow Y_i$
  into a
  proper geometrically integral variety.  Using \cite[Thm.~4.1]{deJong}, there is a
  diagram
  \[
  \xymatrix{
   U_i \ar[d] \ar@{^{(}->}[r] & X_i \ar[d] \\
   V'_i  \ar@{^{(}->}[r] & Y_i
  }
  \]
  in which the vertical arrows are alterations;  moreover, there is a
  finite, purely inseparable extension $L_i/K$ such that the structural morphism $X_i
  \to \spec K$ factors through $\spec L_i$, and $X_i \to \spec L_i$ is
  smooth \cite[Rem.~4.2]{deJong}. 
 The composition $U_i\hookrightarrow X_i \to \operatorname{Spec}L_i$ together with the map $U_i\to V$ determine a unique morphism $U_i\to V_{L_i}$ over $L_i$,   
 giving the following diagram over $L_i$:
    \[
  \xymatrix{
   U_i \ar[d] \ar@{^{(}->}[r] & X_i  \\
   V_{L_i}
  }
  \] 
  Letting $L/K$ be the (purely inseparable) composite of the $L_i$ (in some algebraically closed field containing the $L_i$), base changing to $L$, and then taking unions, \emph{i.e.},  $U := \bigsqcup_i U_i\times_{L_i}L$ and $X:= \bigsqcup_i X_i\times_{L_i} L$,  
  and we obtain a diagram over~$L$:
  \begin{equation}\label{E:UVLX}
  \xymatrix{
   U \ar[d] \ar@{^{(}->}[r] & X  \\
   V_L
  }
  \end{equation}
satisfying the hypotheses of Lemma
  \ref{L:albBCautLK} over $L$.  Indeed, the only thing to check is that $V_L$ is geometrically reduced over $L$, and that $U\to V_L$ is dominant.  The former holds, as for any extension~$L'/L$ we have $(V_L)\times_L L'= (V\times_KL)\times_L L'= (V_K)\times_KL'$.  For the latter, we started with $\bigsqcup_i U_i\to V$ dominant.  
It follows that the composition $U\to \bigsqcup_i U_i\to V$ is dominant.  Moreover, this morphism factors through $V_L\to V$.  
  From say \cite[Prop.~4.35, p.111]{GW20} one has that $V_L\to V$ is injective by virtue of the fact that  $L/K$ is purely inseparable, and one can conclude that $U\to V_L$ is dominant.

   Now, let $M/K$ be a separable extension of fields, and consider the tower of field extensions in~\eqref{E:tower}.  
  We then compute canonical isomorphisms:
       \begin{equation*}
\xymatrix@C3em@R=1.5em{ 
&V_M \ar[rr]^<>(0.5){a_{V_M/M}}\ar@{=}[d]&& \operatorname{Alb}_{V_M/M} \ar@{=}[d]&\\
&V_M \ar[rr]^<>(0.5){\ubar {a_{V_{LM}/LM}}} \ar@{=}[d]&&\im_{LM/M}(\alb_{V_{LM}/LM}) \ar@{=}[d]& \text{(\Cref{L:albBCinsep})}\\
&V_M \ar[rr]^<>(0.5){\ubar{(a_{V_{L}/L})_{LM}}}\ar@{=}[d]&&\im_{LM/M}((\alb_{V_{L}/L})_{LM})  \ar@{=}[d]& \text{(\Cref{L:albBCsmoothalteration}, \eqref{E:UVLX})}\\
&V_M \ar[rr]^<>(0.5){\left(\ubar{a_{V_{L}/L}}\right)_{M}}\ar@{=}[d]&&(\im_{L/K}\alb_{V_{L}/L})_{M}  \ar@{=}[d]& \text{(\Cref{C:ImAbVarSt})}\\
&V_M \ar[rr]^<>(0.5){\left({a_{V/K}}\right)_{M}}&&( \alb_{V/K})_M   &  \text{(\Cref{L:albBCinsep})}
}
\end{equation*}
completing the proof.
 \end{proof}

We note that under the separated hypothesis, combining \Cref{C:Alb-nEx} with  \cite[Thm.~p.4]{schroeralbanese} one has the following base change result:

\begin{pro}[Schr\"oer] \label{P:Schroeer}
If $V$ is a separated scheme of finite type over a field $K$ that admits an Albanese datum (see  \Cref{C:Alb-nEx}\ref{C:Alb-2}), then the Albanese datum is stable under separable base change of field.  For purely inseparable field extensions, if $\Gamma(V,\mathcal O_V)$ is in addition \emph{geometrically}\footnote{The hypothesis that 
$\Gamma(V,\mathcal O_V)$ be \emph{geometrically} reduced is implicit in \cite[Thm.~p.4]{schroeralbanese} for field extensions that are not separable\.; the necessity of this assumption is made clear by \Cref{E:GRX}.}
 reduced then the base change morphism is a universal homeomorphism.
\end{pro}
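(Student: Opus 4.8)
The plan is to read off both assertions from Schr\"oer's base change theorem \cite[Thm.~p.3]{schroeralbanese}, our only task being to rephrase its hypotheses in terms of $R:=\Gamma(V,\mathcal O_V)$ by means of \Cref{C:Alb-nEx}\ref{C:Alb-2}. By that characterization, the separated scheme $V$ admits an Albanese datum precisely when $\spec R$ is connected and reduced and $K$ is algebraically closed in $R$. As recorded in the proof of \Cref{C:Alb-nEx}, the last condition says exactly that Schr\"oer's essential field of constants $K'$ of $V$ equals $K$; moreover, by \Cref{C:Alb-nEx}, $\spec R$ is then geometrically connected.

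For the separable assertion, let $M/K$ be separable. By \Cref{L:betavsbeta1} it suffices to treat $\beta_{V,M/K}$ in the unpointed case, the pointed case being identical. First I would check that $V_M$ again satisfies the criterion of \Cref{C:Alb-nEx}\ref{C:Alb-2} over $M$: we have $\Gamma(V_M,\mathcal O_{V_M})=R\otimes_K M$, which is reduced because $R$ is reduced and $M/K$ is separable, whose spectrum is connected because $\spec R$ is geometrically connected, and in which $M$ is algebraically closed because $K$ is algebraically closed in $R$ and $M/K$ is separable. Thus $V_M$ admits an Albanese datum, its essential field of constants is again the base field $M$, and Schr\"oer's theorem \cite[Thm.~p.3]{schroeralbanese} applies to give that $\beta_{V,M/K}$ is an isomorphism. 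By the definition of stability under separable base change (\S\ref{S:BC}), this is the first assertion.

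For the purely inseparable assertion, let $L/K$ be purely inseparable and assume in addition that $R$ is geometrically reduced. Then $R\otimes_K L$ is reduced by geometric reducedness, its spectrum is connected by geometric connectedness of $\spec R$, and $L$ is algebraically closed in $R\otimes_K L$ (this may be checked after the trivial base change to $\bar K$, using that $R$ is geometrically connected and geometrically reduced). Hence $V_L$ admits an Albanese datum, so $\beta_{V,L/K}$ is defined, and the conclusion that it is a universal homeomorphism is the content of Schr\"oer's theorem \cite[Thm.~p.3]{schroeralbanese} in the inseparable case; as noted in the footnote, the geometric reducedness of $R$ is precisely the hypothesis his statement tacitly requires here.

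The substantive point --- and the only possible obstacle --- is the bookkeeping of the essential field of constants: under a separable extension it remains the base field, which is what licenses Schr\"oer's \emph{isomorphism} statement, whereas under a purely inseparable extension it may enlarge, which is exactly why one can expect only a \emph{universal homeomorphism} rather than an isomorphism. Everything else is a translation of hypotheses through \Cref{C:Alb-nEx} followed by an appeal to \cite[Thm.~p.3]{schroeralbanese}.
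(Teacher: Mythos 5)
Your proposal is correct and takes essentially the same route as the paper: the paper's proof is a one-line observation that \Cref{C:Alb-nEx}\ref{C:Alb-2} shows $V$ satisfies the hypotheses of Schr\"oer's theorem \cite[Thm.~p.3]{schroeralbanese}, which then yields both conclusions. Your additional bookkeeping (verifying that $V_M$ again satisfies the criterion, and tracking the essential field of constants) is a harmless expansion of what the paper delegates entirely to Schr\"oer's theorem.
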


\begin{proof}
\Cref{C:Alb-nEx}\ref{C:Alb-2} implies that $V$ satisfies the hypotheses of 
 Schr\"oer  \cite[Thm.~p.4]{schroeralbanese}.
\end{proof}

\section{The universal property of Albanese varieties}
\label{S:univ}

In fact, Theorem \ref{T:GrConBC-1} as stated above is weaker than what
Grothendieck \cite[Thm.~VI.3.3(iii)]{FGA} and Conrad
\cite[Thm.]{ConradMathOver} actually prove:

 \begin{teo}[Grothendieck--Conrad]
  \label{T:groconrad}
  Let $V/K$ be a \emph{proper} geometrically connected and geometrically reduced  scheme
  over a field $K$.  
  Then for any morphism  of schemes $S
  \to \spec K$, and any $S$-morphism  $f: V_S \to P$ to a torsor under an abelian
  scheme $A/S$, there exists a unique $S$-morphism  $g: (\alb^1_{V/K})_S \to
  P$ such that $g\circ a_S = f$.
  If $V$ admits a $K$-point $v$, then  for any  morphism $S
  \to \spec K$, and any pointed $S$-morphism  $f: V_S \to A$ to an abelian
  scheme $A/S$ taking $v_S$ to $\mathbf 0_A$, there exists a unique $S$-homomorphism $g: (\alb_{V/K})_S\to A$ such that $g\circ (a_v)_S = f$.
   \qed
 \end{teo}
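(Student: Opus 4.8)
The plan is to transport the universal property to the relative Picard scheme, where compatibility with base change is built in, and to handle the torsor and pointed cases uniformly through the duality of \Cref{R:Pic0T}. For an abelian scheme $A/S$ and a torsor $P$ under it, the relative version of \Cref{R:Pic0T} gives $\operatorname{Pic}^0_{P/S}\iso A^\vee$, and any $S$-morphism $g\colon(\alb^1_{V/K})_S\to P$ is equivariant over a unique homomorphism $\phi\colon(\alb_{V/K})_S\to A$ with $\phi=(g^*)^\vee$. Since $a_S^*$ realizes the canonical inclusion $(\alb_{V/K})_S^\vee\hookrightarrow\operatorname{Pic}^0_{V_S/S}$, the constraint $g\circ a_S=f$ translates into $f^*=a_S^*\circ g^*$; hence both assertions reduce to producing, from $f$, a unique homomorphism $\phi$ such that the pullback $f^*\colon A^\vee=\operatorname{Pic}^0_{P/S}\to\operatorname{Pic}^0_{V_S/S}$ factors as $\phi^\vee$ followed by that canonical inclusion.

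The construction of $\phi$ is then transparent. By the Grothendieck--Conrad description recalled in \Cref{R:GrCon}, $(\alb_{V/K})^\vee$ is the maximal abelian subvariety $B\subseteq\operatorname{Pic}_{V/K}$ (equal to $(\operatorname{Pic}^0_{V/K})_{\mathrm{red}}$ when $V$ is geometrically normal). The pullback $f^*$ is a homomorphism by the theorem of the cube, and, its source $A^\vee$ being an abelian scheme, it factors through the maximal abelian subscheme of $\operatorname{Pic}_{V_S/S}$; that factorization is the sought $\phi^\vee$, giving $\phi:=(f^*)^\vee$ and the associated equivariant $g$. One verifies $g\circ a_S=f$ by noting that both sides induce the pullback $f^*$ on $\operatorname{Pic}^0_{P/S}$ and that the image of $a_S$ generates $(\alb^1_{V/K})_S$ fibrewise, so the relative Rigidity Lemma (\Cref{L:GITrigidity}) forces equality; the same fibrewise generation gives uniqueness of $g$, a morphism out of $(\alb^1_{V/K})_S$ being determined by its restriction along $a_S$.

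The step I expect to be the main obstacle is the base-change compatibility underlying this dictionary: one must know that the formation of the maximal abelian subvariety $B\subseteq\operatorname{Pic}_{V/K}$ commutes with $S\to\spec K$, so that $(\alb_{V/K})_S^\vee=B_S$ is the maximal abelian subscheme of $\operatorname{Pic}_{V_S/S}$ and $f^*$ genuinely lands there. For geometrically normal $V$ this rests on Grothendieck's properness of $\operatorname{Pic}^0_{V/K}$ together with base-change compatibility of the Picard scheme for proper flat morphisms; for merely geometrically reduced $V$ it is Conrad's refinement, and this is the delicate point. A more self-contained alternative would be to bootstrap from the field case \Cref{T:GrConBC-1}, which already gives the universal property over every point of $S$; the remaining task is then to glue these fibrewise homomorphisms into a single $S$-morphism using representability and unramifiedness over $S$ of the relative $\operatorname{Hom}$-scheme of abelian schemes, and it is exactly this promotion from pointwise to global data that the Picard-scheme description supplies functorially.
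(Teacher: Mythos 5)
Your Picard-duality skeleton is indeed the route behind this theorem; note, though, that the paper itself offers no proof of Theorem~\ref{T:groconrad} --- it is quoted (see \Cref{R:GrCon}) from Grothendieck \cite[Thm.~VI.3.3(iii)]{FGA} and Conrad \cite[Thm.]{ConradMathOver}. The problem is that your write-up stops exactly where the content begins. The step carrying all the weight is the one you yourself call ``the delicate point'': that for an \emph{arbitrary} $K$-scheme $S$ (possibly non-reduced, non-noetherian) the pullback $f^*\colon A^\vee \to (\operatorname{Pic}_{V/K})_S$ factors through $B_S$, where $B = (\alb_{V/K})^\vee$ is the maximal abelian subvariety of $\operatorname{Pic}_{V/K}$; deferring this to ``Conrad's refinement'' is circular, since Conrad's theorem \emph{is} the statement being proved. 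Your fallback does not close the gap either. The field case (\Cref{T:GrConBC-1}) gives the factorization fibre by fibre, but over a non-reduced $S$ the abelian scheme $A^\vee$ is itself non-reduced, and a morphism factoring through a closed subscheme on every fibre need not factor through it scheme-theoretically (compare $\spec K[\epsilon]/(\epsilon^2) \to \mathbb{A}^1_{K[\epsilon]/(\epsilon^2)}$, $t \mapsto \epsilon$, against the origin). And while unramifiedness plus separatedness of the Hom-scheme of abelian schemes does show that a homomorphism is \emph{determined} by its fibres (the equalizer of two sections is open and closed in $S$), it cannot \emph{produce} one from fibrewise data: a section of an unramified scheme is more than a collection of points, and indeed a homomorphism between the fibres of two abelian schemes need not lift, even over $K[\epsilon]/(\epsilon^2)$. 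So the pointwise-to-global promotion, which you correctly identify as the crux, is left unproven, and the method you propose for it would fail.

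Two secondary steps are also not right as written. You invoke \Cref{L:GITrigidity} both to verify $g \circ a_S = f$ and for uniqueness, but that lemma requires $\abs{S}$ to consist of a single point, so it says nothing over a general base. Both points instead need fppf arguments: $V_S \to S$ is an fppf cover, so one constructs the equivariant $g$ satisfying $g \circ a_S = f$ locally on $S$, where $a_S$ acquires sections, and glues by uniqueness; and if $g_1 \circ a_S = g_2 \circ a_S$ with $g_1, g_2$ equivariant over the same homomorphism, their difference is a section in $A(S)$ pulled back along the surjection $V_S \to S$, hence zero. Relatedly, your dictionary silently uses representability of $\operatorname{Pic}_{V_S/S}$ and the identification $\operatorname{Pic}_{V_S/S} \iso (\operatorname{Pic}_{V/K})_S$ over arbitrary $S$; this does hold here, but only because properness, geometric connectedness and geometric reducedness of $V$ give cohomological flatness in degree zero, and it needs to be said. (The appeal to the theorem of the cube is a red herring: pullback of line bundles is a homomorphism of Picard functors for trivial reasons.)
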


 \begin{rem}
 Recall (similarly to \Cref{R:Pic0T}) that if $A$ and $A'$ are abelian schemes over a scheme~$S$, and $P$ and $P'$ are torsors under $A$ and $A'$, respectively, then for any $S$-morphism $g:P\to P'$, there is a unique $S$-homomorphism $\phi:A\to A'$ making $g$ equivariant, and moreover, $g(P)$ is a torsor under $\phi(A)$.  In particular, in the theorem above, 
there is a unique  $S$-homomorphism $(\operatorname{Alb}_{V/K})_S\to A$ making $g:(\operatorname{Alb}^1_{V/K})_S\to P$ equivariant.  
\end{rem}

If one is willing to restrict to base change by smooth morphisms, one
can derive a similar statement without a properness
hypothesis.

 \begin{teo}[Arbitrary separable base change]
  \label{T:groconrad-open}
  Let $V$ be a
  geometrically connected and geometrically reduced 
  scheme of finite type over a field $K$.
      Then for any (inverse limit of) smooth morphism of schemes $S
  \to \spec K$, and any $S$-morphism  $f: V_S \to P$ to a torsor under an abelian
  scheme $A/S$, there exists a unique $S$-homomorphism $(\alb_{V/K})_S\to A$ and a unique equivariant $S$-morphism  $g: (\alb^1_{V/K})_S \to P$ such that $g\circ a_S = f$.
  
  If $V$ admits a $K$-point $v$, then  for any  (inverse limit of) smooth  morphism $S
  \to \spec K$, and any pointed $S$-morphism  $f: V_S \to A$ to an abelian
  scheme $A/S$ taking $v_S$ to~$\mathbf 0_A$, there exists a unique $S$-homomorphism $g: (\alb_{V/K})_S\to A$ such that $g\circ (a_v)_S = f$.
 \end{teo}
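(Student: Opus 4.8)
The plan is to establish the pointed statement first and then deduce the unpointed (torsor) statement from it; in both cases the idea is to descend to the generic point of the base, where \Cref{T:mainAbody} makes the field-theoretic universal property of the Albanese available, and then to propagate the resulting homomorphism over all of $S$. First I would dispose of the limiting and connectedness issues. Writing $S=\varprojlim_\lambda S_\lambda$ as an inverse limit of smooth finite-type $K$-schemes with affine transition maps, the abelian scheme $A$, the torsor $P$, and the morphism $f$ are all of finite presentation, so by the standard limit formalism (\emph{e.g.} \cite[\S8]{EGAIV4}) they descend to some $S_{\lambda_0}$; since $S$-homomorphisms and the relation $g\circ a_S=f$ both ascend and descend through the limit, it suffices to treat $S$ smooth of finite type over $K$. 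A smooth $K$-scheme is regular, so its connected components are integral and one may argue one component at a time; thus I assume $S$ integral with generic point $\eta$ and function field $K(S)$. Because $S$ is smooth over $K$ the extension $K(S)/K$ is separable, because $V$ is geometrically reduced and $S$ is reduced the product $V_S$ is reduced, and because $V_S\to S$ is flat (being a base change of $V\to\spec K$) the generic fibre $V_{K(S)}=(V_S)_\eta$ is dense in $V_S$.

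For the pointed case I would restrict $f$ to $\eta$, obtaining a pointed $K(S)$-morphism $f_\eta\colon (V_{K(S)},v_{K(S)})\to (A_\eta,\mathbf 0)$. By \Cref{T:mainAbody} the canonical map $(\alb_{V/K})_{K(S)}\to \alb_{V_{K(S)}/K(S)}$ is an isomorphism, so the field-level universal property produces a unique $K(S)$-homomorphism $g_\eta\colon (\alb_{V/K})_{K(S)}\to A_\eta$ with $g_\eta\circ (a_v)_{K(S)}=f_\eta$. The crucial step is to extend $g_\eta$ to an $S$-homomorphism $g\colon (\alb_{V/K})_S\to A$ of abelian schemes: here I would invoke the fact that over a normal Noetherian integral base the restriction map from $S$-homomorphisms of abelian schemes to homomorphisms on the generic fibre is a bijection, so that $g_\eta$ extends, and does so uniquely (see, \emph{e.g.}, \cite{BLR}); this applies since $(\alb_{V/K})_S$ and $A$ are abelian schemes over the normal base $S$. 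Granting $g$, the identity $g\circ (a_v)_S=f$ holds because both morphisms $V_S\to A$ agree on the dense generic fibre $V_{K(S)}$: their equalizer is a closed subscheme of $V_S$ (as $A$ is separated) whose support is all of $V_S$, and since $V_S$ is reduced this forces scheme-theoretic equality. Uniqueness of $g$ is likewise immediate, any competitor restricting to $g_\eta$ on $\eta$ and being determined there.

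It then remains to reduce the torsor statement to the pointed one. I would choose a finite separable extension $K'/K$ over which $V$ acquires a point $v'$ (possible since the nonempty smooth locus of $V$ has a point over $K^{\sep}$, hence over some finite separable $K'$); then $S':=S_{K'}$ is smooth over $K'$ and, by \Cref{T:mainAbody}, $(\alb^1_{V/K})_{K'}\iso\alb^1_{V_{K'}/K'}$ is identified via $v'$ with $\alb_{V_{K'}/K'}$. The section $f(v'_{S'})\in P(S')$ trivializes the torsor, $P_{S'}\iso A_{S'}$, turning $f_{S'}$ into a pointed morphism to an abelian scheme, to which the pointed case applies; this yields the equivariant $g'\colon (\alb^1_{V/K})_{S'}\to P_{S'}$ together with its linearizing $S'$-homomorphism $(\alb_{V/K})_{S'}\to A_{S'}$ (\Cref{R:Pic0T}). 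Because $K'/K$ is finite \'etale and $g'$ is unique, its two pullbacks to $S\times_K K'\times_K K'$ agree, furnishing a descent datum; faithfully flat descent then produces the desired $g$ and the accompanying $S$-homomorphism over $S$, with uniqueness inherited from the pointed case.

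I expect the main obstacle to be the extension step in the second paragraph, passing from the generically defined homomorphism $g_\eta$ to a genuine $S$-homomorphism of abelian schemes, which is exactly where the normality of the smooth base $S$ enters. Everything else, namely the limit and component reductions, the comparison using density of the generic fibre together with reducedness of $V_S$, and the \'etale descent used in the torsor case, is formal once \Cref{T:mainAbody} supplies the factorization at the generic point.
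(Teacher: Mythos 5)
Your proof is correct, and its core engine is the same as the paper's: restrict $f$ to the generic point of (each component of) $S$, where smoothness makes the function field separable over $K$ so that \Cref{T:mainAbody} supplies the factorization, and then propagate over $S$ using rigidity of homomorphisms of abelian schemes over a normal Noetherian base --- your ``bijection on generic fibres'' is exactly Raynaud's extension theorem, which the paper cites as \cite[I.2.7]{FC}. Where you genuinely diverge is in the architecture. The paper proves the torsor statement directly: it spreads the generic-fibre map out to a dense open $U\subseteq S$, extends the underlying homomorphism of abelian schemes by Raynaud, then trivializes both torsors over an fpqc cover $S'\to S$, extends there, and descends via a bespoke graph-theoretic descent lemma (\Cref{L:fpqc}), dismissing the pointed case as ``similar''. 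You instead prove the pointed case first and reduce the torsor case to it: pass to a finite separable $K'/K$ with $v'\in V(K')$, trivialize $P_{S_{K'}}$ by the section $f(v'_{S_{K'}})$, and descend along the finite \'etale cover $S_{K'}\to S$. Your route buys two things: it avoids \Cref{L:fpqc} entirely (once the descent datum is in hand, ordinary fpqc descent of morphisms suffices), and your explicit limit reduction to $S$ smooth of finite type over $K$ makes the Noetherian and normality hypotheses in the extension step literally available --- a point the paper's proof glosses over when $S$ is a genuine inverse limit. One spot to tighten: the agreement of the two pullbacks of $g'$ to $S''=S\times_K\spec(K'\otimes_KK')$ does not follow from uniqueness over $S_{K'}$, which is what you invoke; what is needed is uniqueness of (unpointed) factorizations over $S''$ itself. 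This is available by your own method, since $S''$ is again (a limit of) smooth over $K$: uniqueness holds at the generic point of each component by \Cref{T:mainAbody}, and then density of the generic fibres of the flat torsor $(\alb^1_{V/K})_{S''}\to S''$, together with reducedness of the source and separatedness of $P$, forces equality. So the gap is cosmetic, not structural.
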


 \begin{proof}  We give the proof for the Albanese torsor; the case of the pointed Albanese variety is similar.  It suffices to consider the case where $S$ is irreducible.  By assumption on the morphism $S \to \operatorname{Spec} K$, the extension 
  $\kappa(S)/K$ is separable.
  Consider then the restriction $f_{\eta_S}$ of  $f: X_S \to P$ to the generic
  point $\eta_S$ of $S$. By \Cref{T:mainAbody}, $f_{\eta_S}$ factors
  through $(\mathrm{alb}^1_{V/K})_{\eta_S}$. This gives a canonical
  $\eta_S$-morphism of torsors $(\mathrm{Alb}^1_{V/K})_{\eta_S} \to P_{\eta_S}$
  over $(\mathrm{Alb}_{V/K})_{\eta_S}$.  Let $U\subseteq S$ be an open dense
  subscheme to which these morphisms extend as $g^1: (\alb^1_{V/K})_U \to P_U$
  over $g: (\alb_{V/K})_U \to A_U$.
  By Raynaud's extension theorem \cite[I.2.7]{FC},
  $g$ extends to a morphism of abelian schemes over $S$.
  Let $S' \to S$ be an fpqc morphism such that $(\alb^1_{V/K})_{S'}\to S'$ and
  $T_{S'}\to S'$ admit sections.  Then $(\alb^1_{V/K})_{S'}$ and $P_{S'}$ are
  trivial torsors under abelian schemes over $S'$, and so $g^1_{U\times_S S'}$
  extends to a morphism $(g^1)': (\alb^1_{V/K})_{S'} \to P_{S'}$.  By fpqc descent
  (Lemma~\ref{L:fpqc} below), $(g^1)'$ descends to a morphism $g^1: (\alb^1_{V/K})_S \to
  T$, as desired.
\end{proof}

\begin{lem}
	  \label{L:fpqc}
	  Let $S$ be a scheme  	   and let $X$ and $Y$ be schemes over  $S$, with $Y/S$ separated. 
	  Let $U\subseteq S$ be an open dense subscheme, and let $S' \to S$ be
	  faithfully flat and quasicompact.  Suppose $f: X_U \to Y_U$ is a morphism of
	  schemes over $U$.  If $f_{S'}: X_U \times_S S' \to Y_U\times_S S'$ extends to a
	  morphism $\til f':X_{S'} \to Y_{S'}$, 
	  and $X_U\times_S S'$ is dense in $X_{S'}$,  
     	  then $\til f'$ descends to a morphism
	  $\til f: X \to Y$ over $S$, and $\til f|_U = f$.	
	   	\end{lem}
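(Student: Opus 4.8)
The plan is to prove this by faithfully flat descent for morphisms of schemes. Recall that for $S$-schemes $X$ and $Y$ and the fpqc covering $S'\to S$, the sequence
\[
\Hom_S(X,Y)\longrightarrow \Hom_{S'}(X_{S'},Y_{S'})\rightrightarrows \Hom_{S''}(X_{S''},Y_{S''}),\qquad S'':=S'\times_S S',
\]
is an equalizer, where the two parallel arrows are pullback along the projections $p_1,p_2\colon S''\to S'$ and we use the canonical identifications $X_{S''}\iso X\times_S S''$ and $Y_{S''}\iso Y\times_S S''$ (fpqc descent of morphisms; \emph{e.g.} \cite[p.583]{GW20}). Thus, to descend $\til f'$ to a morphism $\til f\colon X\to Y$ over $S$, it suffices to check the cocycle condition $p_1^*\til f'=p_2^*\til f'$ as morphisms $X_{S''}\to Y_{S''}$; descent then produces a unique such $\til f$ with $\til f_{S'}=\til f'$.

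To verify the cocycle condition, first note that since $\til f'$ extends $f_{S'}$, the restriction of $p_i^*\til f'$ to the open subscheme $X_U\times_S S''$ equals $f_{S''}$, the base change of $f$ to $U\times_S S''$, \emph{independently of} $i$ (both $p_i$ compose with $S'\to S$ to the same map $S''\to S$). Hence $p_1^*\til f'$ and $p_2^*\til f'$ agree on $X_U\times_S S''$. Since $Y/S$ is separated, so is $Y_{S''}/S''$, and therefore the equalizer $E\hookrightarrow X_{S''}$ of the pair $(p_1^*\til f',p_2^*\til f')$---the preimage of $\Delta_{Y_{S''}/S''}$ under $(p_1^*\til f',p_2^*\til f')\colon X_{S''}\to Y_{S''}\times_{S''}Y_{S''}$---is a closed subscheme containing $X_U\times_S S''$. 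Finally, $X_U\times_S S''\hookrightarrow X_{S''}$ is the base change of $X_U\times_S S'\hookrightarrow X_{S'}$ along $p_1\colon S''\to S'$, which is faithfully flat and quasicompact, being a base change of $S'\to S$; since the scheme-theoretic image commutes with flat base change \cite[Prop.~V-8, p.217]{EH2000}, the density of $X_U\times_S S'$ in $X_{S'}$ is inherited by $X_U\times_S S''$ in $X_{S''}$. A closed subscheme containing a (scheme-theoretically) dense open is everything, so $E=X_{S''}$ and $p_1^*\til f'=p_2^*\til f'$, as required.

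It then remains to identify the descended morphism on $U$. For this, $(\til f|_U)_{S'}=(\til f_{S'})|_{X_U\times_S S'}=\til f'|_{X_U\times_S S'}=f_{S'}$, where $f_{S'}$ denotes the base change of $f\colon X_U\to Y_U$; and since $\Hom_U(X_U,Y_U)\hookrightarrow \Hom_{U\times_S S'}\big((X_U)_{S'},(Y_U)_{S'}\big)$ is injective (the map $U\times_S S'\to U$ is again fpqc), we conclude $\til f|_U=f$. The delicate step is the density input: the equalizer $E$ is \emph{a priori} only a closed subscheme, so what is genuinely needed is \emph{scheme-theoretic} density of $X_U\times_S S''$ in $X_{S''}$ (that the only closed subscheme through which the open immersion factors is all of $X_{S''}$), not merely topological density. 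This is precisely what the compatibility of the scheme-theoretic image with the flat base change $p_1$ delivers from the hypothesis; it holds automatically once $X_{S'}$ is reduced, which is the case in the application to \Cref{T:groconrad-open}, where $X=\alb^1_{V/K}$ is smooth.
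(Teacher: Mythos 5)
Your proof is correct and takes essentially the same approach as the paper: both reduce to checking the fpqc cocycle condition over $S''=S'\times_S S'$, using separatedness of $Y/S$ to produce a closed locus and the compatibility of scheme-theoretic images with flat base change to propagate the density hypothesis from $S'$ to $S''$, where the two pullbacks of $\til f'$ visibly agree on $X_U\times_S S''$ because $f_{S'}$ is pulled back from $U$. The only difference is packaging: the paper expresses the cocycle condition as an equality of closed subschemes $p_1^*(\Gamma_{\til f'})=p_2^*(\Gamma_{\til f'})$ and descends the graph, whereas you descend the morphism directly and use the closed agreement locus (the preimage of the diagonal); these formulations are interchangeable.
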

          
	 \begin{proof}
	  Let $S'' = S' \times_S S'$, equipped with the two projections $p_i:S'' \to
	  S'$. Let $\Gamma_{\til f'} \subseteq X_{S'} \times_{S'} Y_{S'}$ be the graph of
	  $\til f'$ (since the graph morphism $\Gamma_{\til f'}:X_{S'}\to   X_{S'} \times_{S'} Y_{S'}$  is a closed embedding, as $Y/S$ is assumed to be separated, we are identifying the graph morphism with its scheme-theoretic image).  By Grothendieck's theory of fpqc descent (\emph{e.g.}, \cite[\S
	  6.1]{BLR} or \cite[Thm.~3.1]{conradtrace}), it suffices to demonstrate an
	  equality of closed subschemes $p_1^*(\Gamma_{\til f'}) = p_2^*(\Gamma_{\til
	   f'})$. However, $p_i^*(\Gamma_{\til f'})$ contains $p_i^*(\Gamma_{f_{S'}})$ as a
	  dense set (here we are using that $X_U\times_SS'$ is assumed to be dense in $X_{S'}$  and that the scheme-theoretic image is stable under \emph{flat} base change, \emph{e.g.}, \cite[Prop.~V-8, p.217]{EH2000}); and $p_1^*(\Gamma_{f_{S'}}) = p_2^*(\Gamma_{f_{S'}})$, because
	  $f_{S'}$ descends to $f$.
	 \end{proof}

 \bibliographystyle{hamsalpha}
 \bibliography{DCG}
  \end{document}